\definecolor{citation}{rgb}{0.2,0.6,0.2}
\definecolor{formula}{rgb}{0.1,0.2,0.6}
\definecolor{url}{rgb}{0,0,0.4}
\newlength{\defbaselineskip}
\newcommand{\setlinespacing}[1]
           {\setlength{\baselineskip}{#1 \defbaselineskip}}
\numberwithin{equation}{section}
\def\vs{\vspace{1mm}}
\def\dd{d_{\textrm{o}}}
\DeclareRobustCommand*{\bfseries}{%
	\not@math@alphabet\bfseries\mathbf
	\fontseries\bfdefault\selectfont
	\boldmath
}
\definecolor{forestgreen(web)}{rgb}{0.13, 0.55, 0.13}
\definecolor{darkpowderblue}{rgb}{0.0, 0.2, 0.6}
\definecolor{indigo(dye)}{rgb}{0.0, 0.25, 0.42}
\definecolor{byzantium}{rgb}{0.44, 0.16, 0.39}
\definecolor{cornellred}{rgb}{0.7, 0.11, 0.11}
\def\p{{\phi}}
\def \e{{\alpha}}
\def \r{{\mathds{R}}}
\def \h{{\mathds{H}}}
\def \l{{\mathcal{L}}}
\def \T{{\textup{Tail}}}
\def\Xint#1{\mathchoice
	{\XXint\displaystyle\textstyle{#1}}%
	{\XXint\textstyle\scriptstyle{#1}}%
	{\XXint\scriptstyle\scriptscriptstyle{#1}}%
	{\XXint\scriptscriptstyle\scriptscriptstyle{#1}} %
	\!\int}
\def\XXint#1#2#3{{\setbox0=\hbox{$#1{#2#3}{\int}$}
		\vcenter{\hbox{$#2#3$}}\kern-.5\wd0}}
\def\dashint{\Xint-}
\begin{document}

 \title{Nonlocal Harnack inequalities in the Heisenberg group
	\thanks{The authors are members of~\,``Gruppo Nazionale per l'Analisi Matematica, la Probabilit\`a e le loro Applicazioni (GNAMPA)'' of Istituto Nazionale di Alta Matematica (INdAM).  The first author is supported by the University of Parma via the project ``Regularity, Nonlinear Potential Theory and related topics''.}}

    \author{Giampiero Palatucci \and Mirco Piccinini}

\institute{G. Palatucci \at 
          Dipartimento di Scienze Matematiche, Fisiche e Informatiche,\\
          Universit\`a di Parma\\
          Parco Area delle Scienze 53/a, Campus, 43124 Parma, Italy\\
          \email{giampiero.palatucci@unipr.it}
         \and M. Piccinini \at 
         Dipartimento di Scienze Matematiche, Fisiche e Informatiche,\\
          Universit\`a di Parma\\
          Parco Area delle Scienze 53/a, Campus, 43124 Parma, Italy\\\email{mirco.piccinini@unipr.it}
          }

\titlerunning{Nonlocal Harnack inequailities in the Heisenberg group}
\maketitle	
	\begin{abstract}
		We deal with a wide class of nonlinear integro-differential problems in the Heisenberg-Weyl group~$\mathds{H}^n$, whose prototype is the Dirichlet problem for the $p$-fractional subLaplace equation. 
		These problems arise in many different contexts in quantum mechanics, in ferromagnetic analysis, in phase transition problems, in image segmentations models, and so on, when non-Euclidean geometry frameworks and nonlocal long-range interactions do naturally occur.
		\\ We  prove general Harnack inequalities for the related weak solutions. Also,
		in the case when the growth exponent is $p=2$, we  investigate the asymptotic behavior of the fractional subLaplacian operator, and the robustness of the aforementioned Harnack estimates as the differentiability exponent $s$ goes to $1$.

	\keywords{Harnack inequalities \and fractional Sobolev spaces \and  H\"older continuity \and Heisenberg group \and fractional sublaplacian\vspace{1mm}}
 
	\subclass{35B10 \and 35B45 \and 35B05 \and 35H05 \and  35R05 \and 47G20\vspace{1mm}} 
	\end{abstract}

	\setcounter{equation}{0}\setcounter{theorem}{0}
	
\tableofcontents

	%
	%
	
	\vspace{2mm}

	\setcounter{tocdepth}{2} 
	
	\setlinespacing{1.03}

    \section{Introduction}
    We deal with a very general class of nonlinear nonlocal operators, which include, as a particular case, the fractional subLaplacian. Precisely, let 
     $\Omega$ be a bounded domain in the Heisenberg-Weyl group~$\h^n$, and let $g$ be  in the fractional Sobolev space $W^{s,p}(\h^n)$, for any $s \in (0,\,1)$ and any $p>1$. We shall prove general Harnack inequalities for the weak solutions 
     to the following  class of nonlinear integro-differential problems, 
      \begin{eqnarray}\label{problema}
           \begin{cases}
     \l u = f  & \text{in} \ \Omega,\\[0.4ex]
     u = g  & \text{in}\ \h^n \smallsetminus \Omega,
          \end{cases}
               \end{eqnarray}
     where $f=f(\cdot,u)$ belongs to $L^\infty_{\text{loc}}(\h^n)$ uniformly in $\Omega$, and
     $\l$ is the operator defined by
     \begin{equation}
     	\label{operatore}
     	\l u (\xi)\, =\, P.~\!V. \int_{\h^n}\frac{|u(\xi)-u(\eta)|^{p-2}\big(u(\xi)-u(\eta)\big)}{\dd(\eta^{-1}\circ \xi)^{Q+sp}}\,{\rm d}\eta, \qquad \xi \in \h^n,
     \end{equation}
     with $\dd$ being a homogeneous norm on $\h^n$, and $Q=2n+2$ the usual homogeneous dimension of $\h^n$. The symbol $P.~\!V.$ in the display above stands for ``in the principal value sense''. We immediately refer the reader to Section~\ref{sec_preliminaries} below for the precise definitions of the involved quantities and related properties, as well as for further observations in order to relax some of the assumptions listed in the present section.
     \vspace{2mm}
     
     Integral-differential operators in the form as in~\eqref{operatore} do arise as a generalization of the fractional subLaplacian on the Heisenberg group, naturally defined in the fractional Sobolev space~$H^s(\h^n)$ for any~$s \in (0,\,1)$ as follows
     \begin{equation}\label{fractional_sublaplacian}
      (-\Delta_{\h^n})^su(\xi)\, :=\, C(n,s) \,P.~\!V. \int_{\h^n}\frac{u(\xi)-u(\eta)}{|\eta^{-1}\circ \xi|_{\h^n}^{Q+2s}}\,{\rm d}\eta, \qquad \xi \in \h^n,
     \end{equation}
     where $|\cdot|_{\h^n}$ is the standard homogeneous norm of $\h^n$, 
      and~$C(n,s)$ is a positive constant which depends only on~$n$ and~$s$. In this fashion, the prototype of the wide class of 
   problems in~\eqref{problema} reads as follows,
     \begin{eqnarray} \label{problemino}
           \begin{cases}
     (-\Delta_{\h^n})^s u = 0  & \text{in} \ \Omega,\\[0.4ex]
     u = g  & \text{in}\ \h^n \smallsetminus \Omega,
          \end{cases}
  \end{eqnarray}
      In the last decades, a great attention has been focused on the study of problems involving fractional equations, both from a pure mathematical point of view and for concrete applications since they naturally arise in many different contexts. Despite its relatively short history, the literature is really too wide to attempt any comprehensive treatment in a single paper; we refer for instance to the paper~\cite{DPV12} for an elementary introduction to fractional Sobolev spaces and for a quite extensive (but still far from exhaustive) list of related references.
%
 For what concerns specifically the family of equations in~\eqref{problema} and the corresponding energy functionals, both in the nonlocal and in the local framework, the link with several concrete models arises from many different contexts in Probability (e.~\!g., in non-Markovian coupling for Brownian motions~\cite{BGM18}), in Physics (e.~\!g., in group theory in quantum mechanics~\cite{Wey50}, in ferromagnetic trajectories~\cite{OS22}, in image segmentation models~\cite{CMS10}, in phase transition problems described by Ising models~\cite{PS99}, and many others),
 where the analysis in sub-Riemannian geometry revealed to be decisive. In this respect,  as proven in the literature, Harnack-type inequalities constitute a fundamental tool of investigation.

    \vspace{2mm}

    	Let us focus now merely on regularity and related results in {\it  the fractional panorama in the Heisenberg group}. It is firstly worth stressing that one can find various definitions of the involved operator and related extremely different approaches. In the linear case when $p=2$, an explicit integral definition can be found in the relevant paper~\cite{RT16}, where several Hardy inequalities for the conformally invariant fractional powers of the sublaplacian are proven, and~\cite{CCR15} for related Hardy and uncertainty inequalities on general stratified Lie groups involving fractional powers of the Laplacian; we also refer to~\cite{AM18}, where, amongst other important results, Morrey and Sobolev-type embeddings are derived for fractional order Sobolev spaces. 
	Still in the linear case when~$p=2$, very relevant results have been obtained based on the construction of fractional operators via  a Dirichlet-to-Neumann map associated to degenerate elliptic equations, as firstly seen for the Euclidean framework in the celebrated Caffarelli-Silvestre $s$-harmonic extension. 
	 For this, we would like to mention the very general approach in~\cite{GT21}; the Liouville-type theorem in~\cite{CT16}; the Harnack and H\"older results in Carnot groups in~\cite{FF15}; the connection with the fractional perimeters of sets in Carnot group in~\cite{FMPPS18}.

	For what concerns the more general situation as in~\eqref{operatore} when a $p$-growth exponent is considered, in our knowledge, a regularity theory is very far from be complete; nonetheless, very interesting estimates have been recently proven, as, e.~\!g., in~\cite{KS18,WD20}, and in our recent paper~\cite{MPPP21} where local boundedness and H\"older estimates have been proven for the weak solutions to~\eqref{problema}.

     \vspace{2mm}
     
     In order to state our main results, we need to introduce a special quantity 
     which  plays a central role when dealing with nonlocal operators. Namely, we define the {\em nonlocal tail}\,~$ \T(u;\xi_0,R)$ of a function $u$ centered in $\xi_0\in\h^n$ of radius $R>0$,
     \begin{equation}\label{tail}
      \T(u;\xi_0,R):= \left( R^{sp}\int_{\h^n \smallsetminus B_R(\xi_0)} |u(\eta)|^{p-1}|\eta^{-1}\circ \xi_0|_{\h^n}^{-Q-sp}\,{\rm d} \eta\right)^\frac{1}{p-1}.
     \end{equation}
    We immediately notice that the  quantity above is finite whenever $u \in L^{q}(\h^n)$, with $ q \geq p-1$. The nonlocal tail in~\eqref{tail} can be seen as the natural generalization in the Heisenberg setting of that originally introduced in~\cite{DKP16,DKP14}, and subsequently 
	revealed to be decisive in the analysis of many different nonlocal problems when a fine quantitative control of the 
	long-range interactions is needed;  see for instance the subsequent results proven in~\cite{KMS15,KKP16,IMS16,KKP17} and the references therein.
\vspace{2mm}	

	Our main result reads as follows,
    \begin{theorem}[{\bfseries Nonlocal Harnack inequality}]\label{thm_harnack}
	For any $s \in (0,1)$ and any $p \in (1,\infty)$, let $u \in W^{s,p}(\h^n)$ be a weak solution to~\eqref{problema} such that $u \geq 0$ in $B_R \equiv B_R(\xi_0) \subset \Omega$. Then, for any $B_r$ such that $B_{6r}\subset B_R$, it holds
	\begin{equation}\label{eq_harnack}
		\sup_{B_{r}}u 
		\, \leq \,
		\textbf{c}\, \inf_{B_r}u + \textbf{c} \left(\frac{r}{R}\right)^\frac{sp}{p-1}\T(u_-;\xi_0,R) +\textbf{c}\,r^\frac{sp}{p-1}\|f\|^\frac{1}{p-1}_{L^\infty(B_R)}\,,\\[1.3ex]
	\end{equation}
	where~$\T(\cdot)$ is defined in~\eqref{tail}, $u_-:=\max\{-u,0\}$ is the negative part of the function~$u$, and $\textbf{c}$ depends only on $n$, $s$, $p$, and the structural constant~$\Lambda$ defined in~\eqref{def_lambda}.
    \end{theorem}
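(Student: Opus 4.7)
The plan is to adapt the nonlocal De Giorgi--Nash--Moser machinery developed in the Euclidean setting in~\cite{DKP14,DKP16} to the Heisenberg framework, exploiting the homogeneity of $(\h^n,\dd)$, the left-invariance of the Haar measure, and the preliminary tools (local boundedness, nonlocal Caccioppoli-type inequalities) already set up in the companion paper~\cite{MPPP21}. The target~\eqref{eq_harnack} will be obtained by composing a \emph{weak Harnack inequality} for nonnegative supersolutions, controlling $\inf_{B_r}u$ from below by an $L^{\varepsilon_0}$-mean of $u$ on~$B_{2r}$, with a \emph{local boundedness estimate} for subsolutions, controlling $\sup_{B_r}u$ from above by the same $L^{\varepsilon_0}$-mean plus tail plus source contributions. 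Both estimates must carry the correct scaling $r^{sp/(p-1)}$ in the tail and in $\|f\|_{L^\infty(B_R)}^{1/(p-1)}$ coming from the $(Q+sp)$-homogeneity of the kernel.

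\textbf{Caccioppoli, logarithmic estimate and John--Nirenberg.} The first block is a nonlocal Caccioppoli-type inequality with tail for truncations $(u-k)_\pm$, derived by testing the weak formulation of~\eqref{problema} with $\varphi=\eta^p(u-k)_\pm$ for a smooth Heisenberg cut-off~$\eta$, symmetrising the double integral via the swap $\xi\leftrightarrow\eta$, and splitting the interaction into contributions on $B_R\times B_R$ and on $B_R\times(\h^n\smallsetminus B_R)$; the latter is controlled by $\T(u_\mp;\xi_0,R)^{p-1}$ thanks to the homogeneity of $\dd$. A refined choice of test function of logarithmic type then yields, for nonnegative supersolutions, the BMO-like bound
\[
\dashint_{B_r}\Bigl|\log(u+d)-\bigl(\log(u+d)\bigr)_{B_r}\Bigr|\,{\rm d}\xi \;\leq\; \textbf{c},
\]
with $d:=r^{sp/(p-1)}\T(u_-;\xi_0,R)+r^{sp/(p-1)}\|f\|_{L^\infty(B_R)}^{1/(p-1)}$. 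Since $(\h^n,\dd,{\rm d}\xi)$ is a space of homogeneous type, the John--Nirenberg inequality applies and produces some $\varepsilon_0=\varepsilon_0(n,s,p,\Lambda)>0$ such that
\[
\biggl(\dashint_{B_{2r}}(u+d)^{\varepsilon_0}\,{\rm d}\xi\biggr)^{\!1/\varepsilon_0} \;\leq\; \textbf{c}\,\biggl(\dashint_{B_{2r}}(u+d)^{-\varepsilon_0}\,{\rm d}\xi\biggr)^{\!-1/\varepsilon_0}.
\]

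\textbf{De Giorgi measure-to-pointwise step and conclusion.} The second block is a nonlocal De Giorgi iteration on a geometric sequence of concentric Heisenberg balls shrinking to~$B_r$, fed again by the Caccioppoli inequality with tail, which upgrades the previous $L^{-\varepsilon_0}$-information into the pointwise lower bound
\[
\inf_{B_r}u+d \;\geq\; \textbf{c}\,\biggl(\dashint_{B_{2r}}(u+d)^{-\varepsilon_0}\,{\rm d}\xi\biggr)^{\!-1/\varepsilon_0}.
\]
Chained with the previous $L^{\varepsilon_0}/L^{-\varepsilon_0}$-comparison, this gives a \emph{weak Harnack inequality} for $u+d$. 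Coupling it with the local boundedness estimate for subsolutions from~\cite{MPPP21}, namely
\[
\sup_{B_r}u \;\leq\; \textbf{c}\,\biggl(\dashint_{B_{2r}}u_+^{\varepsilon_0}\,{\rm d}\xi\biggr)^{\!1/\varepsilon_0}+\textbf{c}\,\Bigl(\frac{r}{R}\Bigr)^{\!sp/(p-1)}\T(u_-;\xi_0,R)+\textbf{c}\,r^{sp/(p-1)}\|f\|_{L^\infty(B_R)}^{1/(p-1)}
\]
on $B_{6r}\subset B_R$, and absorbing the $L^{\varepsilon_0}$-means against $u+d$, one recovers~\eqref{eq_harnack} with constants depending only on $n,s,p,\Lambda$. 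The hardest part is the honest execution of the Caccioppoli-with-tail and logarithmic estimates in the sub-Riemannian geometry: the Koranyi-type norm $\dd$ is not smooth off the center of $\h^n$, admissible cut-offs must respect the horizontal structure, the tail splittings must be performed along group translations $\eta^{-1}\circ\xi$ rather than Euclidean differences, and all constants have to be tracked uniformly in~$s$ by means of the dilation homogeneity $\dd(\delta_\lambda\xi)=\lambda\,\dd(\xi)$; once these geometric adjustments are in place, the nonlocal Moser--De Giorgi iteration and the John--Nirenberg step proceed quantitatively.
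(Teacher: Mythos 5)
Your overall architecture (sup-bound for subsolutions composed with an inf-bound for supersolutions, both carrying the $r^{sp/(p-1)}$ scaling) is the right one, but there is a genuine gap at the point where you invoke the local boundedness estimate. The estimate actually available (Theorem~1.1 in~\cite{MPPP21}, restated as Theorem~\ref{thm_bdd} above) reads $\sup_{B_{r/2}}u\le \delta\,\T(u_+;\xi_0,r/2)+\textbf{c}\,\delta^{-\gamma}\bigl(\dashint_{B_r}u_+^p\,{\rm d}\xi\bigr)^{1/p}$: the tail appearing on the right-hand side is that of the \emph{positive} part of $u$ on the small ball, not $\T(u_-;\xi_0,R)$. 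You write the sup-bound directly with $\T(u_-;\xi_0,R)$ on the right, which is precisely what has to be proved. The missing ingredient is a tail-control lemma of the form $\T(u_+;\xi_0,r)\le \textbf{c}\sup_{B_r}u+\textbf{c}\,(r/R)^{sp/(p-1)}\T(u_-;\xi_0,R)+\textbf{c}\,r^{sp/(p-1)}\|f\|_{L^\infty(B_R)}^{1/(p-1)}$ (Lemma~\ref{s3_lem5} in the paper), which is obtained by testing the equation with $\psi=(u-2k)\p^p$, $k=\sup_{B_r}u$, and which crucially uses that $u$ is a full solution, not merely a subsolution. Without it the long-range positive contributions cannot be absorbed and the Harnack inequality does not close. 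Relatedly, you also elide the interpolation/absorption step: even after the tail control, one must interpolate $\bigl(\dashint u_+^p\bigr)^{1/p}\le(\sup u)^{(p-\e)/p}\bigl(\dashint u^{\e}\bigr)^{1/p}$ and run the absorption iteration of Lemma~\ref{giaquinta_giusti} to lower the exponent from $p$ to the small $\e$ produced by the weak Harnack step; this is routine but not free, since the sup reappears on the right.

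For the inf-bound your route differs from the paper's: you propose the classical Moser crossover via a BMO estimate for $\log(u+d)$ and John--Nirenberg in the homogeneous-type space $(\h^n,\dd,{\rm d}\xi)$, followed by a negative-exponent Moser iteration. The paper instead uses the logarithmic estimate only to derive a measure bound on sublevel sets (Lemma~\ref{s3_lem1}, via the fractional Poincar\'e inequality), upgrades it to a pointwise lower bound by a De Giorgi iteration handling separately the cases $sp<Q$, $sp=Q$, $sp>Q$ (Lemma~\ref{s3_lem3}), and then applies the Krylov--Safonov covering lemma to reach the $L^{\e}$--inf comparison (Lemma~\ref{s3_lem4}). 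Your route is viable in principle, but note that the negative-exponent iteration requires a Caccioppoli inequality with tail for $w=(u+d)^{-\beta}$ with $\beta>0$, i.e.\ the range $q>p$, whereas the one proved in~\cite{MPPP21} (Theorem~\ref{s3_lem2}) covers only $q\in(1,p)$; you would need to extend it, taking care of the sign of the $f$-contribution. The paper's covering-lemma route avoids negative exponents altogether, which is one reason it is preferred here.
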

    Notice that in the case when $u$ is nonnegative in the whole~$\h^n$, the formulation in~\eqref{eq_harnack} does reduce to that of the classical Harnack inequality.
    \vspace{2mm}
    
    In the particular situation when $u$ is merely a weak supersolution to Problem \eqref{problema}, still in analogy with the classical case when~$s=1$, a weak Harnack inequality can be proven, as stated in the following
    \begin{theorem}[{\bfseries Nonlocal weak Harnack inequality}]\label{thm_weak}
	For any $s \in (0,1)$ and any $p \in (1,\infty)$, let $u \in W^{s,p}(\h^n)$ be a weak supersolution to~\eqref{problema} such that $u \geq 0$ in $B_R \subseteq \Omega$.
	  Then, for any $B_r$ such that $B_{6r}\subset B_R$, it holds
\begin{equation}\label{eq_weakharnack}
 	\left( \ \dashint_{B_{r}}  u^t\,{\rm d}\xi\,\right)^\frac{1}{t} \ \leq \ \textbf{c}\,\inf_{B_{\frac{3}{2}r}} u 
\, +\textbf{c} \left(\frac{r}{R}\right)^\frac{sp}{p-1} \T(u_-;\xi_0,R) \, + \textbf{c}\chi,
\end{equation}
where
$$
\chi = 
\begin{cases}
r^\frac{Qsp}{t(Q-sp)}\|f\|_{L^\infty(B_R)}^\frac{Q}{t(Q-sp)} \qquad \text{for} \ t<\frac{Q(p-1)}{Q-sp} &\quad  \text{if} \ sp<Q,\\[0.8ex]
r^\frac{Q(s-\varepsilon)}{t\varepsilon}\|f\|_{L^\infty(B_R)}^\frac{s}{t\varepsilon}  \qquad \text{for any} \ s-{Q}/{p}< \varepsilon<s \ \text{and}\  t<\frac{(p-1)s}{\varepsilon} &\quad  \text{if} \ sp\geq Q,
\end{cases}
$$
	$\T(\cdot)$ is defined in~\eqref{tail}, and $u_-:=\max\{-u,0\}$ is the negative part of the function~$u$. The constant~$\textbf{c}$ depends only on $n$, $s$, $p$, and the structural constant~$\Lambda$ defined in~\eqref{def_lambda}.
    \end{theorem}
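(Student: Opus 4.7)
The plan is to follow the Di~Castro--Kuusi--Palatucci scheme for the weak Harnack inequality (as in~\cite{DKP16,DKP14}), adapted to the sub-Riemannian geometry of~$\h^n$ by exploiting the doubling property of Lebesgue measure with respect to the homogeneous norm~$\dd$ and the left-invariance of~\eqref{operatore}. First, I would replace $u$ by the shifted function
\[
\bar u(\xi) := u(\xi) + \textbf{d}, \qquad \textbf{d} := \left(\frac{r}{R}\right)^{\frac{sp}{p-1}}\T(u_-;\xi_0,R) + k\,r^{\frac{sp}{p-1}}\|f\|_{L^\infty(B_R)}^{\frac{1}{p-1}},
\]
where $k$ is a constant to be fixed. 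Because $u$ is a supersolution with $u\geq 0$ in $B_R$, a direct computation using the very weak formulation of~\eqref{problema} shows that $\bar u$ is a positive supersolution of a related nonlocal inequality in which both the forcing~$f$ and the long-range negative part have been ``absorbed'' into the additive constant~$\textbf{d}$; this is the mechanism by which the tail term and the $\chi$ term will appear in the final estimate.

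Next I would test the equation satisfied by $\bar u$ against the logarithmic test function $\varphi = \bar u^{1-p}\,\eta^p$, with $\eta$ a smooth cut-off supported in $B_{2r}\subset B_{3r}\Subset B_R$; this is the standard logarithmic Caccioppoli estimate in the supersolution range. Combining the algebraic inequality
\[
|a-b|^{p-2}(a-b)\bigl(a^{1-p}-b^{1-p}\bigr) \,\geq\, c\,\bigl|\log a - \log b\bigr|^{p}
\]
with the symmetric structure of the kernel and the discrete integration by parts that is customary in the fractional setting, one obtains a bound of the form
\[
\int_{B_r}\!\!\int_{B_r}\frac{|\log \bar u(\xi)-\log \bar u(\eta)|^{p}}{\dd(\eta^{-1}\circ\xi)^{Q+sp}}\,{\rm d}\xi\,{\rm d}\eta \,\leq\, C\,r^{Q-sp}.
\]
By the fractional Poincaré inequality on the Heisenberg group (available via the embeddings recalled in Section~\ref{sec_preliminaries}; see also~\cite{AM18}), this implies a uniform BMO--bound for $\log \bar u$ on $B_{3r/2}$. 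At this point, since $(\h^n,\dd,{\rm d}\xi)$ is a doubling metric measure space, the John--Nirenberg inequality applies and yields the existence of a threshold exponent $t_0=t_0(n,s,p,\Lambda)>0$ such that
\[
\Bigl(\,\dashint_{B_{3r/2}}\bar u^{t_0}\Bigr)^{1/t_0}\,\Bigl(\,\dashint_{B_{3r/2}}\bar u^{-t_0}\Bigr)^{1/t_0}\,\leq\, C.
\]

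To finish, I would run a Moser iteration on negative powers of $\bar u$, starting from the supersolution Caccioppoli estimate tested with $\varphi=\bar u^{-q}\eta^p$ for $q>p-1$, and employing the Heisenberg fractional Sobolev embedding (with exponent $p^*_{s}=Qp/(Q-sp)$ if $sp<Q$, and the Morrey embedding if $sp\geq Q$) to iterate $L^q$--bounds up to~$L^\infty$, obtaining
\[
\sup_{B_{3r/2}}\bar u^{-1}\,\leq\, C\,\Bigl(\,\dashint_{B_{3r/2}}\bar u^{-t_0}\Bigr)^{1/t_0}.
\]
Chaining this with the BMO--bound above and then upgrading $t_0$ to any admissible~$t$ by a standard interpolation, one arrives at
\[
\Bigl(\dashint_{B_r}\bar u^{t}\Bigr)^{1/t}\,\leq\,C\,\inf_{B_{3r/2}}\bar u,
\]
which, after undoing the shift and unpacking~$\textbf{d}$, produces~\eqref{eq_weakharnack}. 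The dichotomy in the definition of~$\chi$ is precisely what comes out of distinguishing the Sobolev regime $sp<Q$ (where the iteration exponent is~$p^{*}_{s}$) from the Morrey/super-critical regime $sp\geq Q$ (where one loses a parameter~$\varepsilon$).

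The delicate step, and the one I would expect to require the most care, is the reverse-type log--Caccioppoli estimate on~$\h^n$: its derivation rests on a careful splitting of the double integral into the \emph{near-diagonal} part (where one exploits the algebraic log--inequality above and the Heisenberg quasi-triangle property of $\dd$) and the \emph{tail} part (which must be estimated through the nonlocal tail~$\T(u_-;\xi_0,R)$ rather than a pointwise bound). All other steps—the John--Nirenberg step, the Moser iteration, and the treatment of~$f$—are essentially group-invariant adaptations of the Euclidean arguments, since the Heisenberg fractional Sobolev and Poincaré inequalities on balls of~$\dd$ are at our disposal.
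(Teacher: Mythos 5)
Your route is genuinely different from the paper's. The paper never invokes John--Nirenberg: the crossing of the zero exponent is packaged in Lemma~\ref{s3_lem4}, which is obtained from the measure-theoretic expansion of positivity (Lemmas~\ref{s3_lem1} and~\ref{s3_lem3}, themselves built on the logarithmic estimate you describe) combined with the Krylov--Safonov covering lemma, in the spirit of~\cite{DKP14}. The positive-exponent side is then handled by applying the Caccioppoli estimate of Theorem~\ref{s3_lem2} to $w=(u+d)^{(p-q)/p}$ with $q\in(1,p)$, the Sobolev embedding of Theorem~\ref{sobolev} (at the reduced order $s-\varepsilon$ when $sp\ge Q$, not via Morrey --- this is exactly where the parameter $\varepsilon$ in $\chi$ comes from), and a finite Moser iteration leading to~\eqref{hrn_4}. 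Your BMO/John--Nirenberg/negative-power-iteration scheme is the classical Moser alternative and is in principle viable here, since $(\h^n,\dd,{\rm d}\xi)$ is a doubling metric measure space; what the paper's route buys is that Lemmas~\ref{s3_lem1}--\ref{s3_lem4} are needed anyway for the strong Harnack inequality, and that one never needs a Caccioppoli-type estimate for exponents outside $(1,p)$.

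Two points in your sketch need repair before it proves the stated theorem. First, the final passage ``upgrading $t_0$ to any admissible $t$ by a standard interpolation'' cannot be an interpolation: you must pass from $L^{t}$ \emph{down} to $L^{t_0}$ for every $t$ up to the critical threshold, and this is precisely the finite Moser iteration on small positive powers; it is also the step that produces the restrictions $t<Q(p-1)/(Q-sp)$, resp.\ $t<(p-1)s/\varepsilon$, because the supersolution Caccioppoli estimate~\eqref{caccioppoli} is available only for powers $p-q\in(0,p-1)$. Second, your treatment of $f$ does not yield the stated $\chi$. Adding the constant $k\,r^{sp/(p-1)}\|f\|_{L^\infty(B_R)}^{1/(p-1)}$ does not make $\bar u$ a supersolution of a homogeneous problem (constants cancel in $\l$), although it does absorb the $d^{1-q}r^{Q}\|f\|_{L^\infty(B_R)}$ term of~\eqref{caccioppoli}; carried to the end this produces the error term $(r^{sp}\|f\|_{L^\infty(B_R)})^{1/(p-1)}$, whereas $\chi=(r^{sp}\|f\|_{L^\infty(B_R)})^{Q/(t(Q-sp))}$ with $Q/(t(Q-sp))>1/(p-1)$; neither quantity controls the other uniformly in $r$ and $\|f\|_{L^\infty(B_R)}$, so you would be proving a variant of~\eqref{eq_weakharnack}. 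To obtain the exponent in the statement you must carry the $\|f\|_{L^\infty(B_R)}$-contribution through the Sobolev/Moser iteration as in~\eqref{19.0}--\eqref{hrn_4} rather than hide it in the shift. (A minor slip: the algebraic inequality as you wrote it has the wrong sign, since its left-hand side is nonpositive; the correct form is the one used in the Logarithmic Lemma of~\cite{MPPP21}.)
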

As expected, a nonlocal tail contribution should be still taken into account, and, again, such a contribution in~\eqref{eq_weakharnack} will disappear in the case when the function~$u$ is nonnegative in the whole~$\h^n$. 
   \vspace{3mm}

It is now worth noticing that the main difficulty into the treatment of the equation in~\eqref{problema} lies in the very definition of the leading operator~$\l$ defined in~\eqref{operatore}, which combines the typical issues given by its {\it nonlocal} feature together with the ones given by its {\it nonlinear} growth behaviour and with those naturally arising from the non-Euclidean geometrical structure. 
\vspace{1mm}

For this, some very important tools recently introduced in the nonlocal theory and successfully applied in the fractional sublaplacian on the Heisenberg group, as the aforementioned Caffarelli-Silvestre $s$-harmonic extension, 
and the approach via Fourier representation, as well as
	other  successful tools, like for instance 
	the commutator estimates in~\cite{Sch16}, the pseudo-differential commutator compactness in~\cite{PP14}, and many others, seem not to be adaptable to the framework we are dealing with.
	However, even in such a  nonlinear non-Euclidean framework, we will be able to extend part of the strategy developed in~\cite{DKP14} where nonlocal Harnack inequalities have been proven for the homogeneous version of the analogue of problem~\eqref{problema} in the Euclidean framework.  Further efforts are also needed due to the presence of the non homogeneous datum~$f$, as well as in order to deal with the limit case when $sp=Q$, both of them are novelty even with respect to the results proven in the Euclidean framework in~\cite{DKP14}.
	
\vspace{2mm}

Let now focus on the linear case when $p=2$ when the datum $f$ in the right-hand side of the equation in~\eqref{problema} is zero. It is worth mentioning that the necessity of the presence of the tail term in the Harnack inequalities stated above is a very recent achievement. Indeed, during the last decades, the validity of the classical Harnack inequality without extra positivity assumptions on the solutions has been an open problem in the nonlocal setting, and more in general for integro-differential operators of the form in~\eqref{problema} even in the Euclidean framework. An answer has been eventually given by Kassmann in his breakthrough papers~\cite{Kas07,Kas11}, where a simple counter-example is provided in order to show that positivity cannot be dropped nor relaxed even in the most simple case when $\l$ does coincide with the fractional Laplacian operator $(-\Delta)^s$; see Theorem~1.2 in~\cite{Kas07}. The same author proposed a new formulation of the Harnack inequality without requiring the additional positivity on solutions by adding an extra term, basically a natural tail-type contribution on the right-hand side, in accordance with the result presented here; see Theorem 3.1 in~\cite{Kas11}, where the robustness of the estimates as $s$ goes to $1$ is also presented.
In the same spirit, we also investigate the special linear case in which~$\l$ does reduce to the pure fractional subLaplacian, namely problem~\eqref{problemino}. Firstly, as expected, we prove that the fractional subLaplacian~$(-\Delta_{\h^n})^s$ effectively converges to the standard subLaplacian~$-\Delta_{\h^n}$ as $s$ goes to $1$, as stated in Proposition~\ref{limit_of_fraclap} below. For this, we shall carefully estimate the weighted second order integral form of the fractional subLaplacian with the aim of a suitable Mac-Laurin-type expansion in the Heisenberg group.
       \begin{prop}\label{limit_of_fraclap}
	For any $u \in C^\infty_0(\h^n)$ the following statement holds true
	\begin{equation}\label{limit}
		\lim_{s \rightarrow 1^-}(-\Delta_{\h^n})^s u = -\Delta_{\h^n} u,
	\end{equation}
	where $(-\Delta_{\h^n})^s$ is defined by~\eqref{fractional_sublaplacian}, and $\Delta_{\h^n}$ is the classical subLaplacian in~$\h^n$.
	\end{prop}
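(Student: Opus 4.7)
The strategy is to reduce the pointwise limit at a fixed $\xi\in\h^n$ to a Mac-Laurin expansion of $u$ around $\xi$ in exponential coordinates of $\h^n$, integrated against the singular kernel of $(-\Delta_{\h^n})^s$, and then to track the behaviour of the resulting integrals as $s\to 1^-$. Using the symmetry $|\eta^{-1}|_{\h^n}=|\eta|_{\h^n}$ together with the bi-invariance of the Haar measure on $\h^n$, the substitution $\eta\mapsto\xi\circ\eta^{-1}$ in~\eqref{fractional_sublaplacian} followed by the usual symmetrization trick yields
\[
(-\Delta_{\h^n})^s u(\xi) \,=\, -\frac{C(n,s)}{2}\int_{\h^n}\frac{u(\xi\circ\eta)+u(\xi\circ\eta^{-1})-2u(\xi)}{|\eta|_{\h^n}^{Q+2s}}\,{\rm d}\eta,
\]
and the principal value can be dropped since the numerator vanishes quadratically near $\eta=0$.

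Next, in exponential (Euclidean) coordinates $\eta=(x,y,t)$ the group inverse acts as $\eta^{-1}=-\eta$, so the function $v(\eta):=u(\xi\circ\eta)$, which is smooth since $u\in C^\infty_0(\h^n)$, satisfies by Taylor
\[
u(\xi\circ\eta)+u(\xi\circ\eta^{-1})-2u(\xi) \,=\, v(\eta)+v(-\eta)-2v(0) \,=\, \eta^{T}H_v(0)\,\eta + R(\eta),
\]
with $R(\eta)=O(|\eta|^{4})$ by parity cancellation of the odd-order terms. Then one splits the integration region into $B_1^{\h^n}$ and $\h^n\setminus B_1^{\h^n}$ (balls in the homogeneous norm). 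On the complement, $\|u\|_\infty$ and the integrability of $|\eta|_{\h^n}^{-Q-2s}$ give an $O(1)$ contribution uniformly in $s$, which is killed in the limit by the factor $C(n,s)\sim c(n)(1-s)$; the remainder $R(\eta)$ produces on $B_1^{\h^n}$ an integral of order $\int_0^1 r^{3-2s}\,{\rm d}r=O(1)$, again negligible against $C(n,s)$.

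For the quadratic main term, by the symmetries of $|\cdot|_{\h^n}$ under the reflections $x_i\mapsto -x_i$, $y_j\mapsto -y_j$, $t\mapsto -t$, only the diagonal monomials in the Hessian survive. Using the Heisenberg dilations $\delta_r(x,y,t)=(rx,ry,r^2 t)$ and the parametrization $\eta=\delta_r(\omega)$ with $|\omega|_{\h^n}=1$, so that ${\rm d}\eta=r^{Q-1}\,{\rm d}r\,{\rm d}\sigma(\omega)$, one computes
\[
\int_{B_1^{\h^n}}\frac{x_i^2}{|\eta|_{\h^n}^{Q+2s}}\,{\rm d}\eta \,=\, \frac{1}{2(1-s)}\int_{\{|\omega|_{\h^n}=1\}}\omega_{x_i}^2\,{\rm d}\sigma(\omega),
\]
and analogously for $y_i^2$, whereas the $t^2$-integral reduces to $\int_0^1 r^{3-2s}\,{\rm d}r$, which is bounded as $s\to 1^-$ and therefore negligible after multiplication by $C(n,s)$. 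The algebraic heart is the identity $\partial_{x_i}^2 v(0)=(X_i^2 u)(\xi)$, $\partial_{y_i}^2 v(0)=(Y_i^2 u)(\xi)$, a direct consequence of the group law and of the explicit form of the left-invariant horizontal fields $X_i$, $Y_i$; summing these contributions and using the asymptotic $(1-s)^{-1}C(n,s)\to c(n)$, calibrated precisely so as to absorb the remaining spherical constant, produces exactly $-\Delta_{\h^n} u(\xi)$.

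\textbf{Main obstacle.} The principal difficulty is the anisotropy of the Heisenberg dilation: the vertical coordinate $t$ scales with $r^{2}$ rather than $r$, which forces a separate treatment of the three blocks of $H_v(0)$ and a careful verification that the genuinely $r^{3-2s}$-type integral coming from $\partial_t^2 v(0)$ really vanishes against the factor $(1-s)$, while only the horizontal entries combine into $\Delta_{\h^n}$. A companion subtlety is the exact matching, at the level of $C(n,s)\sim c(n)(1-s)$, between the Heisenberg-sphere integrals of $\omega_{x_i}^2$, $\omega_{y_i}^2$ and the normalization needed to recover the standard subLaplacian in the limit.
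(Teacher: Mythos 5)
Your argument is correct and follows essentially the same route as the paper: the symmetrized second-order form of the operator, the splitting into $B_1$ and its complement, a second-order expansion whose quadratic horizontal part is evaluated in Folland--Stein polar coordinates to produce the factor $(2(1-s))^{-1}$, and the asymptotics of the normalizing constant $C(n,s)\sim (1-s)$. The only (harmless) difference is that you expand $\eta \mapsto u(\xi\circ\eta)$ with the full Euclidean Hessian and then discard the $x_i t$, $y_j t$ and $t^2$ entries by reflection symmetry and by the anisotropic scaling $t\sim r^2$, whereas the paper invokes the stratified MacLaurin polynomial of homogeneous degree $2$, which contains only the symmetrized horizontal Hessian from the outset.
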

Notice that a proof of the result above for fractional subLaplacian on Carnot groups can be found in the relevant paper~\cite{FMPPS18}, via  heat kernel characterization.
\vspace{2mm}

Secondly, we revisit the proofs of Theorems~\ref{thm_harnack}-\ref{thm_weak} by taking care of the dependance of the differentiability exponent~$s$ in all the estimates, so that we are eventually able to obtain the results below in clear accordance with the analogous ones in the Euclidean framework~\cite{Kas11}, by proving that the nonlocal tail term will vanish when $s$~goes to~$1$, in turn recovering the classical Harnack formulation.
   \begin{theorem}\label{thm_harnack2}
   	For any $s \in (0,1)$ let $u \in H^s(\h^n)$ be a weak solution to~\eqref{problemino} such that $u \geq 0$ in $ B_R(\xi_0) \subset \Omega$. Then, the following estimate holds true for any $B_{r}$ such that $B_{6r} \subset B_R$,
   	\begin{equation}
   		\sup_{B_{r}}u \leq \textbf{c}\, \inf_{B_{r}}u + \textbf{c}\, (1-s)\left(\frac{r}{R}\right)^{2s}\T(u_-;\xi_0,R),
   	\end{equation}
   	where~$\T(\cdot)$ is defined in~\eqref{tail} by taking $p=2$ there, $u_-:=\max\{-u,0\}$ is the negative part of the function~$u$, and $\textbf{c}=\textbf{c}\,(n,s)$.
   \end{theorem}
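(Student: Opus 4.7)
The strategy is to reprove Theorem~\ref{thm_harnack} in the linear case $p=2$ with $f\equiv 0$, this time keeping the normalization constant $C(n,s)$ appearing in~\eqref{fractional_sublaplacian} explicit throughout the argument, and tracking how it propagates through the various energy and iteration steps. The crucial ingredient is the well-known asymptotic $C(n,s)\sim (1-s)$ as $s\to 1^-$, consistent with Proposition~\ref{limit_of_fraclap}: when the weak formulation of $(-\Delta_{\h^n})^s u=0$ is split into a near-diagonal contribution and a long-range one, the latter inherits an explicit $C(n,s)$-factor, while the former is reabsorbed into the Gagliardo $H^s(\h^n)$-seminorm on the left-hand side without any spurious $s$-dependence.

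Concretely, I would proceed in four steps. First, derive a refined Caccioppoli-type inequality for the truncations $(u-k)_\pm$, schematically of the form
\[
[\,(u-k)_+ \phi\,]_{H^s(B_r)}^2 \,\leq\, \textbf{c}\,\int_{B_r}(u-k)_+^2\,{\rm d}\xi \,+\, \textbf{c}\,(1-s)\,r^Q\,\T(u_-;\xi_0,R)^2,
\]
with an analogous bookkeeping for the logarithmic lemma applied to $\log u$. Second, iterate the Caccioppoli inequality via a Moser scheme to get the $L^\infty$-bound
\[
\sup_{B_r} u \,\leq\, \textbf{c}\,\Bigl(\dashint_{B_{3r/2}} u^2\,{\rm d}\xi\Bigr)^{1/2} \,+\, \textbf{c}\,(1-s)(r/R)^{2s}\T(u_-;\xi_0,R),
\]
verifying that the compounded iteration constants stay uniformly bounded as $s\to 1^-$. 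Third, combine the logarithmic estimate with a measure-to-pointwise De Giorgi-type lemma to infer the matching lower bound for $\inf_{B_{3r/2}}u$ with the same structure, again with the factor $(1-s)$ appearing only in front of the tail. Fourth, chain the two estimates to recover the Harnack inequality with the prescribed $(1-s)$ prefactor on $\T(u_-;\xi_0,R)$.

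The main obstacle is the careful $s$-stability analysis of all constants produced along the Moser scheme: one has to verify that the Sobolev embedding constants for $W^{s,2}(\h^n)$ in the regime $s\to 1^-$, the geometric parameters of the iteration, and the algebraic manipulations between truncation levels do not introduce any hidden $(1-s)^{-1}$ or $s^{-1}$ blow-up. In the Euclidean case this is essentially the content of Kassmann's argument in~\cite{Kas11}, and the same phenomenon is expected in~$\h^n$; the only genuine vanishing $s$-dependence ultimately rests in the long-range contribution through the normalization $C(n,s)$. Once this stability is established, the classical subLaplacian Harnack inequality is seamlessly recovered in the limit $s\to 1^-$ via Proposition~\ref{limit_of_fraclap}.
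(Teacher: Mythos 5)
Your proposal is correct and follows essentially the same route as the paper: the theorem is obtained by rerunning the nonlinear De Giorgi--Nash--Moser argument of Section~\ref{sec_weak} with $p=2$ and $f\equiv 0$, restating the expansion-of-positivity lemmas, the $L^\e$-estimate and the tail control with the factor $(1-s)$ attached to every tail term, and verifying that no iteration or embedding constant degenerates as $s\to 1^-$. One small imprecision: for the homogeneous problem the normalization $C(n,s)$ cancels from the weak formulation, so the $(1-s)$ does not literally come from the operator's constant; in the paper it is injected through the choice $d=\frac{1-s}{2}\left(\frac{r}{R}\right)^{2s}\T(u_-;\xi_0,R)$ in Lemma~\ref{s4_lem1} and compensated by the $(1-s)$-robust fractional Poincar\'e and Sobolev constants, which is precisely the bookkeeping you identify as the main obstacle.
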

       
         \begin{theorem}\label{thm_weakharnack2}
    	For any $s \in (0,1)$ let $u \in H^s(\h^n)$ be a weak supersolution to~\eqref{problemino}, such that $u \geq 0$ in $B_R(\xi_0) \subseteq \Omega$. Then,  the following estimate holds true for any $B_r$ such that $B_{6r} \subset B_R$, and any $t<Q/(Q-2s)$,
    	\begin{equation}
    		\left( \, \dashint_{B_{r}}  u^t{\rm \, d}\xi\right)^\frac{1}{t} \leq  \textbf{c}\,\inf_{B_{\frac{3r}{2}}} u +\textbf{c} \,(1-s)\left(\frac{r}{R}\right)^{2s} \T(u_-;\xi_0,R),
    	\end{equation}
    	where~$\T(\cdot)$ is defined in~\eqref{tail} by taking $p=2$ there, $u_-:=\max\{-u,0\}$ is the negative part of the function~$u$, and~$\textbf{c}=\textbf{c}\,(n,s)$.
    \end{theorem}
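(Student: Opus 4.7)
The plan is to revisit the proof of Theorem~\ref{thm_weak} in the special linear case $p=2$, $f\equiv 0$, paying scrupulous attention to how the differentiability exponent~$s$ enters every intermediate constant. The central idea, inspired by Kassmann's approach in the Euclidean framework~\cite{Kas11}, is to work throughout with the weak formulation of~\eqref{problemino} normalized by the constant $C(n,s)$ appearing in~\eqref{fractional_sublaplacian}, rather than with the un-normalized form built into the operator~$\l$ of~\eqref{operatore}. Proposition~\ref{limit_of_fraclap}, together with the Mac-Laurin-type expansion used in its proof, identifies the asymptotic $C(n,s)\sim(1-s)$ as $s\to 1^-$, and it is precisely this asymptotic that will produce the $(1-s)$ prefactor in front of the tail in the final estimate.

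The first step is to re-derive, with this normalization, the two building blocks already employed in the proof of Theorem~\ref{thm_weak}: a Caccioppoli-type inequality with tail for weak supersolutions, and a logarithmic estimate obtained by testing against $\eta^2 u^{-1}$ (after suitable truncation) with a Heisenberg cut-off~$\eta$. In both estimates, the local double-integral contributions, once multiplied by $C(n,s)$, are controlled uniformly in $s$ by the Gagliardo seminorm, in accordance with the Bourgain-Brezis-Mironescu asymptotic on~$\h^n$. Dually, the nonlocal term generated by restricting the double integral to $\h^n\smallsetminus B_R$ inherits the normalization $C(n,s)$, and therefore acquires the $(1-s)$ factor in front of $\T(u_-;\xi_0,R)$. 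With these two $s$-robust building blocks in hand, one runs the De Giorgi-Moser iteration exactly as in the proof of Theorem~\ref{thm_weak}: the fractional Sobolev embedding $W^{s,2}(B_R)\hookrightarrow L^{2Q/(Q-2s)}(B_R)$ on~$\h^n$ (with its $s$-explicit constant) allows iteration on negative powers of~$u$ to control $\inf_{B_{3r/2}} u$; the logarithmic estimate then bridges to a small positive-power average $\bigl(\dashint u^{t_0}\bigr)^{1/t_0}$; and a further upward Moser iteration raises the exponent to any $t<Q/(Q-2s)$. Since $f\equiv 0$, the auxiliary term $\chi$ is absent, and the second regime $sp\geq Q$ does not arise, because $Q=2n+2\geq 4>2>2s$ on~$\h^n$.

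The main obstacle is not any single step of the iteration but rather the bookkeeping required to ensure that every constant --- in Caccioppoli, in the logarithmic estimate, in the Sobolev embedding on~$\h^n$, in the covering lemmas, and in the interpolation used to patch estimates on nested balls --- is either independent of~$s$ or absorbs the singular factor $C(n,s)^{-1}$ against a matching $C(n,s)$ provided by the normalized energy. The delicate point is the $s$-explicit fractional Sobolev embedding on the Heisenberg group, whose constant must scale like $(1-s)^{-1}$ in order to compensate the $(1-s)$ coming from the energy; verifying this asymptotic by a covering or Maz'ya-type argument is the only substantive step beyond the pattern of Theorem~\ref{thm_weak}. Once this $s$-uniform bookkeeping is in place, the argument reproduces the proof of Theorem~\ref{thm_weak} at $p=2$, $f\equiv 0$ essentially verbatim, with every occurrence of $\T(u_-;\xi_0,R)$ systematically replaced by $(1-s)\,\T(u_-;\xi_0,R)$.
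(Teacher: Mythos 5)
Your overall architecture coincides with the paper's: both re-run the proof of Theorem~\ref{thm_weak} at $p=2$, $f\equiv 0$, in the subcritical regime (which, as you correctly note, is the only one occurring since $2s<2\leq Q$), tracking the $s$-dependence of every constant so that the tail emerges with a $(1-s)$ prefactor. The difference is purely one of bookkeeping. The paper does not renormalize the weak formulation at all: it inserts the factor $(1-s)$ directly into the parameter $d$ of the logarithmic/expansion-of-positivity lemmas, taking $d:=\frac{1-s}{2}(r/R)^{2s}\T(u_-;\xi_0,R)$ in Lemma~\ref{s4_lem1}, and then checks that with this choice the term $d^{1-p}R^{-sp}[\T(u_-;\xi_0,R)]^{p-1}$ in the Caccioppoli and logarithmic estimates is comparable to the local constants, which themselves behave like $(1-s)^{-1}r^{-2s}$ because $\int_{B_r}|\eta^{-1}\circ\xi|_{\h^n}^{2-Q-2s}\,{\rm d}\eta\sim (1-s)^{-1}r^{2-2s}$. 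Your device of multiplying the bilinear form by $C(n,s)$ is an equivalent way to organize the same cancellation; but note that for the homogeneous equation the normalization is a free choice (a weak supersolution of $(-\Delta_{\h^n})^su=0$ is one of $\l u=0$ with $p=2$ and conversely), so it is not literally the asymptotic $C(n,s)\sim(1-s)$ that ``produces'' the prefactor --- rather it is the mismatch between the $(1-s)^{-1}$ blow-up of the local kernel integrals and the $s$-uniform boundedness of the tail integral. Your route to the infimum bound via a Moser iteration on negative powers also differs mildly from the paper's Lemma~\ref{s4_lem4}, which goes through the expansion of positivity plus the Krylov--Safonov covering lemma; both are standard and both work.

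One point in your write-up must be corrected, as stated it would break the argument. You write that the fractional Sobolev constant on $\h^n$ ``must scale like $(1-s)^{-1}$ in order to compensate the $(1-s)$ coming from the energy.'' The required asymptotic is the opposite: relative to the \emph{un-normalized} Gagliardo seminorm one needs $\|v\|^2_{L^{2^*}}\leq c\,(1-s)\,[v]^2_{W^{s,2}}$, i.e.\ a constant that \emph{vanishes} like $(1-s)$, consistently with the Bourgain--Brezis--Mironescu limit $(1-s)[v]^2_{W^{s,2}}\to c\|\nabla_{\h^n}v\|^2_{L^2}$; equivalently, the constant relative to your normalized energy is uniform in $s$. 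Were the constant to blow up like $(1-s)^{-1}$, the Sobolev step would contribute an uncompensated $(1-s)^{-2}$ and the iteration would not close. With this sign of the asymptotic fixed (a point the paper itself leaves implicit in Theorem~\ref{sobolev}), your plan reproduces the paper's proof.
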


	\vspace{3mm}

	{\it Further developments}. \,Starting from the results proven in the present paper, several questions naturally arise.
	\\*[0.3ex]
	\indent $\bullet$  Firstly, it is worth remarking that we treat general weak solutions, namely by truncation and dealing with the resulting error term as a right hand-side, in the same flavour of the papers~\cite{DKP14,DKP16,KKP17}, in the spirit of De~Giorgi-Nash-Moser. However, one could approach the same family of problems by focusing solely to bounded viscosity solutions in the spirit of Krylov-Safonov. 
	\vspace{1mm}
	
	
	
	$\bullet$ Still in clear accordance with the Euclidean counterpart, one would expect self-improving properties of the solutions to~\eqref{problema}. For this, one should extend the recent nonlocal Gehring-type theorems proven in~\cite{KMS15,Sch16,SM22}. 
	\vspace{1mm}
	
	$\bullet$ One could expect nonlocal H\"arnack inequalities and other regularity results for the solutions to a strictly related class of problems; that is, by adding in~\eqref{problema} a second integral-differential operators, of differentiability exponent~$t>s$ and summability growth~$q>1$, controlled by the zero set of a modulating coefficient: the so-called nonlocal double phase problem, in the same spirit of the Euclidean case treated in~\cite{DFP19,BOS21}, starting from the pioneering results in the local case, when $s=1$, by Colombo and Mingione; see for instance~\cite{DM20,DM20b} and the references therein. 

	 In the same spirit, it could be interesting to understand if our methods do apply in non-Euclidean setting for even more general nonlocal nonstandard growth equations, as the one recently considered in~\cite{CKW22,PT21}.
	\vspace{1mm}


	$\bullet$ 
Recently, mean value properties for solutions to fractional equations have been of great interest. It could be interesting to generalize such an investigation in a fractional non-Euclidean framework as the one considered in the present paper; we refer to the relevant results in~\cite{BDV20,BS21} and the references therein.
	\vspace{1mm}
		
	$\bullet$ Moreover, to our knowledge, nothing is known about Harnack inequalities and more in general about the regularity  for  solutions to  parabolic nonlocal integro-differential equations involving the nonlinear operators in~\eqref{operatore}.
	\vspace{1mm}

$\bullet$ Finally, by starting from the estimates proven in the present paper, in~\cite{Pic22} regularity results up to the boundary have been proven for very general boundary data, and for the related obstacle problem. As expected, a tail contribution naturally appears in those estimates in order to control the nonlocal contributions coming from far. Many subsequent related problems are still open, and not for free because of the possible degeneracy and singularity of~$\l$, as for instance boundary Harnack inequalities or Carleson estimates for the homogeneous case. The boundary H\"older estimates and the comparison results in the aforementioned paper~\cite{Pic22} together with the Harnack estimates presented here could be a starting point for such a delicate investigation. 
Still for what concerns boundary Harnack inequalities, we also refer the reader to~\cite{ROTL21} for  general strategy for equations with possibly unbounded right hand-side data.

%
	
	\vspace{3mm}
	
	{\it To summarize}.\,~The results in the present paper seem to be the first ones
	 concerning Harnack estimates for nonlinear nonlocal equations in the Heisenberg group. We prove that one can extend to the Heisenberg setting the strategy successfully applied in the fractional Euclidean case~(\!\!~\cite{DKP14,DKP16,KMS15}), 
	by attacking even a more general equation which applies to non-zero data and also to the case when $sp\geq Q$.
	 From another point of view, our results can be seen as the (nonlinear) nonlocal extension of the Heisenberg counterpart of the celebrated classical Harnack inequality (\!\!~\cite{AGT17,BLU07,LY13}). Moreover, since we derive all our results for a general class of nonlinear integro-differential operators, via our approach by taking into account all the nonlocal tail contributions in a precise way, we obtain alternative proofs that are new even in the by-now classical case of the pure fractional sublaplacian operator $(-\Delta_{\h^n})^s$; also, in such a case, we are able to prove the robustness of the Harnack estimates with respect to $s$ in the limit as $s$ goes to $1$. 
	 
	 	We would guess that our estimates will be important in a forthcoming nonlinear nonlocal theory in the Heisenberg group.

	\vspace{3mm}
	{\it The paper is organized as follows}.\,~In Section~\ref{sec_preliminaries} below we set up notation, and we briefly recall our underlying geometrical structure, by also recalling the involved functional spaces, and providing a few remarks on the assumptions on the data. A few classical technical tools are also stated. In Section~\ref{sec_recent} we present very recent results for fractional equations in the Heisenberg group.
	In Section~\ref{sec_weak}, we firstly carry out a suitable positivity expansion and some tail estimate. Then we complete the proof of the Harnack inequality with tail, and the weak Harnack inequality with tail, respectively. 
		Section~\ref{sec_limit} is devoted to the asymptotic of the fractional subLaplacian operator, and the robustness of the Harnack inequalities in the linear case.

   
  \vspace{2mm}
   \section{Preliminaries}\label{sec_preliminaries}
In this section we state the general assumptions on the quantity we are dealing with. We keep these assumptions throughout the paper.
Firstly, notice that we will follow the usual convention of denoting by~$\textbf c$ a general positive constant which will not necessarily be the same at different occurrences and which can also change from line to line. For the sake of readability, dependencies of the constants will  be often omitted within the chains of estimates, therefore stated after the estimate. 
  Relevant dependencies on parameters will be emphasized by using parentheses.
  
   \subsection{The Heisenberg-Weyl group}\label{sec_heis}
    We start by very briefly recalling a few well-known facts about the Heisenberg group; see for instance~\cite{BLU07} for a more exhaustive treatment.
\vspace{1mm}

We denote points in~$\r^{2n+1}$ by
   $
   \xi := (z,t) = (x_1,\dots,x_n, y_1,\dots,y_n,t).
   $

   For any~$\xi,\xi'\in \r^{2n+1}$, consider the group multiplication~$\circ$ defined by
   	\begin{eqnarray*}
   	\xi \circ \xi' & := &\left(x+x',\, y+y',\, t+t'+2\langle y,x'\rangle-2\langle x,y'\rangle \right) \\*[0.3ex]
   	 & = & \left( x_1+x_1', ..., x_n+x_n',\, y_1+y_1', ..., y_n+y_n',\, t+t' +2 \sum_{i=1}^n\big(
   	y_ix_i'-x_iy'_i\big) 
   	\right).
   \end{eqnarray*}
%
%
 
   For any~$\lambda>0$, 
 the  automorphism group~$({\Phi}_\lambda)_{\lambda>0}$ on~$\r^{2n+1}$ is defined by
   	                 $\xi   \mapsto {\Phi}_{\lambda}(\xi):=(\lambda x,\, \lambda y,\, \lambda^2 t)$, and, as customary,
   $Q\equiv 2n+2$ is the {\it homogeneous dimension of}~$\r^{2n+1}$ with respect to~$({\Phi}_\lambda)_{\lambda>0}$, 
  so  that the Heisenberg-Weyl group~$\h^n := (\r^{2n+1},\circ, {\Phi}_\lambda)$ is a homogeneous Lie group.
   
   The Jacobian base of the Heisenberg Lie algebra~$\mathfrak{h}^n$ of~$\h^n$ is given by
   $$
   X_j := \partial_{x_j} +2y_j\partial_t, \quad X_{n+j}:= \partial_{y_j}-2x_j\partial_t, \quad 1 \leq j\leq n, \quad T:=\partial_t.
   $$
Since 
   $[X_j,X_{n+j}]=-4\partial_t$ {for every } $1 \leq j \leq n$, it plainly follows that
   \begin{eqnarray*}
   	&& \textup{rank}\Big(\textup{Lie}\{X_1,\dots,X_{2n},T\}(0,0)\Big) 
   	\ = \ 2n+1,
   \end{eqnarray*}
   so that~$\h^n$ is a Carnot group with the following stratification of the algebra
   $$
   \mathfrak{h}^n  = \textup{span}\{X_1,\dots,X_{2n}\} \oplus \textup{span}\{T\}.
   $$
   We have now the following 
   \begin{defn}
   	\label{def_homnorm}
   	A \textup{homogeneous norm} on $\h^n$ is a continuous function {\rm (}with respect to the Euclidean topology\,{\rm )} ${d_{\rm o}} : \h^n \rightarrow [0,+\infty)$ such that:
   	\begin{enumerate}[\rm(i)]
   		\item{
   			${d_{\rm o}}({\Phi}_\lambda(\xi))=\lambda {d_{\rm o}}(\xi)$, for every $\lambda>0$ and every $\xi \in \h^n$;
   		}\vspace{1mm}
   		\item{
   			${d_{\rm o}}(\xi)=0$ if and only if $\xi=0$.}
   	\end{enumerate}
   A homogeneous norm ${ d_{\rm o}}$ is {\rm symmetric} if ${d_{\rm o}}(\xi^{-1})={d_{\rm o}}(\xi)$, for all $\xi \in \h^n$;
   \end{defn}
   	If ${d_{\rm o}}$ is a homogeneous norm on $\h^n$, then the function~$\Psi$ defined by
   	$$
   	\Psi(\xi,\eta):={d_{\rm o}} (\eta^{-1}\circ \xi)
   	$$
   	is a pseudometric on $\h^n$.
   	\vspace{1mm}
   	We recall that {\it the standard homogeneous norm $|\cdot|_{\h^n}$ on $\h^n$} is given by
   	\begin{equation}\label{korany_folland}
   		|\xi|_{\h^n}:= \left(|z|^4+t^2\right)^\frac{1}{4}, \quad \forall \xi :=(z,t) \in \r^{2n+1}.
   	\end{equation}
		For any fixed~$\xi_0 \in \h^n$ and~$R>0$ we denote with~$B_R(\xi_0)$ the ball with center~$\xi_0$ and radius~$R$ defined by
    $$
    B_R(\xi_0) := \left\{ \xi \in \h^n : |\xi_0^{-1}\circ \xi|_{\h^n} < R\right\}.
    $$
    
It is now worth noticing that for any homogeneous norm~$d_{\rm o}$ on $\h^n$ one can prove the existence of a positive constant~$\Lambda$ such that
\begin{equation}\label{def_lambda}
   	\Lambda^{-1}|\xi|_{\h^n}\leq {d_{\rm o}}(\xi) \leq \Lambda|\xi|_{\h^n}, \qquad \forall \xi \in \h^n.
\end{equation}
   As a consequence,
   in most of the estimates in the forthcoming proofs, one can simply take into account the pure homogeneous norm defined in~\eqref{korany_folland} with no modifications at all.  
   \vspace{1mm}

   In the analysis of the special case when the integro-differential operator~\eqref{operatore} does reduce to the standard fractional subLaplacian, we will need to obtain fine estimates by taking into account the differentiability exponent~$s$ near 1, and thus several modifications with respect to the proof of similar estimates in the Euclidean framework are needed. In particular, in order to obtain the desired characterization  of the asymptotic behaviour as $s$ goes to $1$ of the fractional subLaplacian, and consequently proving the consistency of our Harnack estimates with tail in the limit (see Section~\ref{sec_limit}), we are able to overcome some difficulties coming from the non-Euclidean structure considered here by making use of a suitable MacLaurin-type expansion.
%
%
   \begin{defn}
   	Let $u \in C^\infty(\h^n;\r)$.
	Then, for any $m \in \mathbb{N} \cup \{0\}$ there exists a unique polynomial $P$ being ${\Phi}_{\lambda}$-homogeneous of degree at most $m$ such that
   	$$
   	(X_1,\dots,X_{2n},T)^{\beta} P(0)= (X_1,\dots,X_{2n},T)^{\beta} u(0)
   	$$
   	for any multi-index $\beta= (\beta_1,\dots,\beta_{2n+1})$ with $|\beta|_{\h^n}=\beta_1+\cdots+\beta_{2n}+2\beta_{2n+1} \leq m$. We say that $P:=P_m(u,0)(\xi)$ is {\rm``}the {\rm MacLaurin polynomial} of ${\Phi_{\lambda}}$-degree $m$ associated to $u${\rm ''}.
   \end{defn}
   In the case of the Heisenberg group one can  explicitly write the MacLaurin polynomial of ${\Phi}_\lambda$-degree 2; we have
$$	P_2(u,0)(x_1,\dots,x_{2n},t) \, = \, u(0)+ \nabla_{\h^n}u(0)\cdot z + \partial_t u (0) \cdot t 
   	   + \frac{1}{2} \langle x, D^{2,*}_{\h^n}u (0) \cdot x \rangle ,
$$
   where  the subgradient~$\nabla_{\h^n}u$ is given by
   $\nabla_{\h^n} u(\xi) := \left( X_1 u(\xi),\dots, X_{2n}u(\xi)\right)$, and
$D^{2,*}_{\h^n}$ is the symmetrized horizontal Hessian matrix; that is,
   \begin{equation}\label{symm_hess}
   	D^{2,*}_{\h^n} u(\xi) := \left(\frac{1}{2}\big(X_iX_ju(\xi)+X_jX_iu(\xi)\big)\right)_{i,j =1,\dots,2n}
   \end{equation}
   \begin{defn}
   	Let $u \in C^\infty(\h^n;\r)$, $\xi_0 \in \h^n$, and $m \in \mathbb{N} \cup \{0\}$. Let us consider the MacLaurin polynomial $P_m (u(\xi_0 \circ \cdot),0)$ of the function
   	$\xi \longmapsto u(\xi_0 \circ \xi)$
   	 The polynomial 
   	$$
   	P_m(u,\xi_0)(\xi):=P_m(u(\xi_0 \circ \cdot),0)(\xi_0^{-1} \circ \xi),
   	$$
   	is the {\rm Taylor polynomial} of $\h^n$-degree $m$ centered at $\xi_0$ associated to $u$.
   \end{defn}
 One can prove the following
   \begin{prop}{\rm (see for instance~\cite[Corollary~20.3.5]{BLU07})}.\label{prop_taylor}
   	For every $u \in C^{m+1}(\h^n;\r)$, $\xi_0 \in \h^n$ and $m \in \mathbb{N} \cup \{0\}$, we have that
   	$$
   	u(\xi)= P_m(u,\xi_0)(\xi) + o(|\xi_0^{-1}\circ \xi|_{\h^n}^{m+1}).
   	$$
   \end{prop}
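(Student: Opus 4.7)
The plan is to reduce to the base point $\xi_0 = 0$ and then match the standard Euclidean Taylor expansion of the shifted function with the Heisenberg MacLaurin polynomial after regrouping monomials according to their $\Phi_\lambda$-homogeneous degree. Setting $v(\eta):=u(\xi_0\circ\eta)$ and $\zeta:=\xi_0^{-1}\circ\xi$, left-invariance of $X_1,\dots,X_{2n},T$ gives $(X_1,\dots,T)^\beta v(0) = (X_1,\dots,T)^\beta u(\xi_0)$ for every multi-index $\beta$; combined with the very definition $P_m(u,\xi_0)(\xi) = P_m(u(\xi_0\circ\cdot),0)(\zeta)$, this yields $P_m(u,\xi_0)(\xi) = P_m(v,0)(\zeta)$. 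Since moreover $|\xi_0^{-1}\circ\xi|_{\h^n} = |\zeta|_{\h^n}$, it is enough to prove
\begin{equation*}
v(\zeta) \ = \ P_m(v,0)(\zeta) \ + \ o\big(|\zeta|_{\h^n}^{m+1}\big) \quad \text{as}\ |\zeta|_{\h^n} \to 0.
\end{equation*}

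I would then apply the classical Euclidean Taylor formula to $v \in C^{m+1}(\r^{2n+1};\r)$ at the origin,
\begin{equation*}
v(\zeta) \ = \ \sum_{|\gamma|\leq m+1} \frac{\partial^\gamma v(0)}{\gamma!}\,\zeta^\gamma \ + \ R(\zeta),\qquad R(\zeta) = o\big(\|\zeta\|^{m+1}\big),
\end{equation*}
where $\gamma = (\gamma',k)\in \mathbb{N}^{2n}\times\mathbb{N}$, the monomial $\zeta^\gamma = z^{\gamma'}t^k$ has $\Phi_\lambda$-degree $|\gamma'|+2k$, and $\|\cdot\|$ denotes the Euclidean norm on $\r^{2n+1}$. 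Splitting the polynomial part into $\Pi_{\le m}(\zeta) + \Pi_{>m}(\zeta)$, the two pieces collecting respectively the monomials of $\Phi_\lambda$-degree $\le m$ and $\ge m+1$, I would identify $\Pi_{\le m}$ with the MacLaurin polynomial $P_m(v,0)$ by uniqueness: $\Pi_{\le m}$ is $\Phi_\lambda$-homogeneous of degree $\le m$ by construction, while for every $\beta$ with $|\beta|_{\h^n}\le m$ the horizontal derivatives $(X_1,\dots,T)^\beta\Pi_{>m}(0)$ and $(X_1,\dots,T)^\beta R(0)$ both vanish, so that $(X_1,\dots,T)^\beta\Pi_{\le m}(0) = (X_1,\dots,T)^\beta v(0)$. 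The first vanishing follows since horizontally differentiating a monomial of $\Phi_\lambda$-degree $\ge m+1$ at most $m$ times leaves a polynomial of strictly positive $\Phi_\lambda$-degree; the second from $\partial^\alpha R(0)=0$ for $|\alpha|\le m$ together with the fact that $(X_1,\dots,T)^\beta$ is a combination of Euclidean partial derivatives of total order $\le |\beta|_{\h^n}$.

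For the remainder estimate, the pointwise bounds $|z_i|\le|\zeta|_{\h^n}$ and $|t|^{1/2}\le|\zeta|_{\h^n}$, built into the standard homogeneous norm~\eqref{korany_folland}, yield $|z^{\gamma'}t^k|\le|\zeta|_{\h^n}^{|\gamma'|+2k}$ for every monomial in $\Pi_{>m}$, hence $\Pi_{>m}(\zeta)=O(|\zeta|_{\h^n}^{m+1})$; the comparison $\|\zeta\|\le c\,|\zeta|_{\h^n}$ valid for $|\zeta|_{\h^n}\le 1$ converts $R(\zeta)=o(\|\zeta\|^{m+1})$ into $o(|\zeta|_{\h^n}^{m+1})$. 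The main obstacle is the identification $\Pi_{\le m}=P_m(v,0)$, which rests on the triangular change-of-basis between Euclidean partial derivatives and horizontal derivatives at the origin; this triangularity, ultimately a consequence of the stratified nature of $\mathfrak{h}^n$ and of the commutator relation $[X_j,X_{n+j}]=-4T$, ensures that the corrections generated when expanding a product $(X_1,\dots,T)^\beta$ only produce strictly lower-order terms in the $\Phi_\lambda$-grading, so that each horizontal derivative is expressible as a triangular combination of Euclidean partials and vice versa.
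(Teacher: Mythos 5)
The paper does not actually prove this proposition: it is imported verbatim from~\cite[Corollary~20.3.5]{BLU07}, so there is no internal argument to compare against and any self-contained proof is, by construction, a ``different route''. Yours is the standard one and its structure is sound: the reduction to $\xi_0=0$ via left-invariance is legitimate; the Euclidean Peano--Taylor expansion of order $m+1$ applies to $v\in C^{m+1}$; the regrouping by ${\Phi}_\lambda$-degree loses no monomial of homogeneous degree $\le m$ (since the homogeneous degree $|\gamma'|+2k$ dominates the Euclidean degree $|\gamma'|+k$); and the identification $\Pi_{\le m}=P_m(v,0)$ via the uniqueness built into the definition is correctly supported by the two vanishing claims — $(X_1,\dots,T)^\beta$ lowers the ${\Phi}_\lambda$-degree of a homogeneous polynomial by exactly $|\beta|_{\h^n}$, hence sends a monomial of degree $\ge m+1$ to a homogeneous polynomial of positive degree vanishing at $0$, and it has Euclidean order $\le|\beta|_{\h^n}\le m$, hence annihilates the remainder at $0$.

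The one point you should not pass over silently is the final conversion. Your estimate for the high-degree block is, as you write, $\Pi_{>m}(\zeta)=O\big(|\zeta|_{\h^n}^{m+1}\big)$, and this cannot be upgraded to the $o\big(|\zeta|_{\h^n}^{m+1}\big)$ claimed in the statement: $\Pi_{>m}$ contains monomials of ${\Phi}_\lambda$-degree exactly $m+1$ with nonzero coefficients in general. Indeed the proposition is false as literally written — take $m=0$ and $u(\xi)=x_1$, so that $P_0(u,0)=0$ while $u(\xi)/|\xi|_{\h^n}$ does not tend to $0$ along $\xi=(x_1,0,\dots,0)$. What your argument proves, and what~\cite{BLU07} actually asserts for $u\in C^{m+1}$, is
\begin{equation*}
u(\xi)=P_m(u,\xi_0)(\xi)+O\big(|\xi_0^{-1}\circ\xi|_{\h^n}^{m+1}\big)\qquad\text{as }\xi\to\xi_0,
\end{equation*}
which is also all that is used later in the proof of Proposition~\ref{limit_of_fraclap}, where only the bound $|u(\xi\circ\eta^{-1})-P_2(u,\xi)(\xi\circ\eta^{-1})|\le C|\eta|_{\h^n}^{3}$ is needed to integrate against $|\eta|_{\h^n}^{-Q-2s}$ uniformly in $s$. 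So your proof is correct for the correct statement; you should state the $O$-version explicitly rather than leave the $O$-to-$o$ discrepancy unaddressed.
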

   
   \vspace{2mm}
   \subsection{The fractional framework} We now introduce the natural fractional framework;  we point out that our setup is in align with Section~2 in~\cite{MPPP21}.
   \vspace{1mm}
   
   Let~$p>1$ and~$s \in (0,1)$,  the Gagliardo seminorm of a measurable function~$u:\h^n\to\r$ is given by
   \begin{equation}
   	[u]_{W^{s,p}} := \left(\,\int_{\h^n}\int_{\h^n} \frac{|u(\xi)-u(\eta)|^p}{|\eta^{-1}\circ \xi|_{\h^n}^{Q+sp}}{ \rm d}\xi {\rm d}\eta\right)^\frac{1}{p},
   \end{equation}
   and the fractional Sobolev space~$W^{s,p}(\h^n)$ given by
    \begin{equation}
    	W^{s,p}(\h^n) := \Big\{ u \in L^p(\h^n): [u]_{W^{s,p}(\h^n)}<+\infty\Big\},
    \end{equation}
   is equipped with the natural norm
   \begin{equation}
   	\|u\|_{W^{s,p}(\h^n)} := \left(\|u\|_{L^p(\h^n)}^p+[u]_{W^{s,p}}^p\right)^\frac{1}{p}, \qquad \forall u \in W^{s,p}(\h^n).
   \end{equation}
   Similarly, given a domain~$\Omega \subset \h^n$,  the fractional Sobolev space~$W^{s,p}(\Omega)$ is given by   \begin{equation}
   W^{s,p}(\Omega):= \left\{ u \in L^p(\Omega): \left(\,\int_{\Omega}\int_{\Omega} \frac{|u(\xi)-u(\eta)|^p}{|\eta^{-1}\circ \xi|_{\h^n}^{Q+sp}}\,{\rm d}\xi {\rm d}\eta\right)^\frac{1}{p}<+\infty\right\}
   \end{equation}
   endowed with the norm
   \begin{equation}
    	\|u\|_{W^{s,p}(\Omega)} := \left(\|u\|_{L^p(\Omega)}^p+\int_{\Omega}\int_{\Omega} \frac{|u(\xi)-u(\eta)|^p}{|\eta^{-1}\circ \xi|_{\h^n}^{Q+sp}} \,{\rm d}\xi {\rm d}\eta\right)^\frac{1}{p}, \qquad \forall u \in W^{s,p}(\Omega).
   \end{equation}
   
   We denote by $W^{s,p}_0(\Omega)$ the closure of~$C^\infty_0(\Omega)$ in~$W^{s,p}(\h^n)$.
   \vspace{2mm}

   %
   We conclude this section by recalling the natural definition of weak solutions to the class of problem we are deal with; that is,
   \begin{eqnarray}\label{problema1}
 \begin{cases}  \l u = f \quad & \Omega\subset \h^n,\\[1ex]
   u=g \quad & \h^n \smallsetminus \Omega,
\end{cases}
   \end{eqnarray}   
  where the datum~$f\equiv f(\cdot,u) \in L^\infty_{\textrm{loc}}(\h^n)$ uniformly in $\Omega$,
  the nonlocal boundary datum~$g$ belongs to $W^{s,p}(\h^n)$, and the leading operator~$\l$ is an integro-differential operator of differentiability exponent~$s \in (0,1)$ and summability exponent~$p>1$ given by
  $$
  \l u(\xi) = P.~\!V. \int_{\h^n}\frac{|u(\xi)-u(\eta)|^{p-2}\big(u(\xi)-u(\eta)\big)}{d_{\rm o}(\eta^{-1}\circ \xi)^{Q+sp}}\,{\rm d}\eta, \qquad \xi \in \h^n,
  $$
  with~$d_{\rm o}$ being an homogeneous norm according to Definition~\ref{def_homnorm}.
  
For any~$g \in W^{s,p}(\h^n)$,  consider the classes of functions
   $$
    \mathcal{K}^{\pm}_g (\Omega):=\Big\{  v \in W^{s,p}(\h^n): (g-v)_\pm \in W^{s,p}_0(\Omega)\Big\},
   $$
   and 
   $$
   \mathcal{K}_g(\Omega) := \mathcal{K}^+_g(\Omega) \cap \mathcal{K}^-_g(\Omega) = \Big\{  v \in W^{s,p}(\h^n): v-g \in W^{s,p}_0(\Omega)\Big\}.
   $$ 
   We have the following
    \begin{defn}\label{solution}
	A function $u \in \mathcal{K}^-_g(\Omega)$ (respectively, $\mathcal{K}^+_g(\Omega)$) is a {\rm weak subsolution} (respectively, {\rm supersolution}) to~\eqref{problema1} if
     \begin{align*}
	   &\int_{\h^n}\int_{\h^n}\frac{|u(\xi)-u(\eta)|^{p-2}(u(\xi)-u(\eta))(\psi(\xi)-\psi(\eta))}{d_{\rm o}(\eta^{-1}\circ \xi)^{Q+sp}}{ \rm d}\xi {\rm d}\eta\\		
       & \qquad \qquad \qquad \leq (\geq, {\rm resp.}) \int_{\h^n}f(\xi,u(\xi))\psi(\xi){\rm d}\xi,
	\end{align*}
	for any nonnegative  $ \psi \in W_0^{s,p}(\Omega)$.
	\\ A function u is a \textup{weak solution} to problem~\eqref{problema1} if it is both a weak sub- and supersolution.
	 \end{defn}
   
   	\begin{rem}\label{rem_tailspace}
		The requirement on the boundary datum~$g$ to be in the whole~$W^{s,p}(\h^n)$ can be weakened by assuming a local fractional differentiability, namely $g \in W_{\textrm{\rm loc}}^{s,p}(\Omega)$, in addition to the boundedness of its nonlocal tail; i.~\!e., $\text{\rm Tail}(g; \xi_0, R)~<~\infty$, for some $\xi_0\in\h^n$ and some $R>0$.  This is not restrictive, and it does not bring  modifications in the rest of the paper. For further details on the related~``Tail space'', we refer the interested reader to papers{\rm ~\cite{KKP16,KKP17}}. 
	\end{rem}
	
%
  \vspace{2mm}
   \subsection{Classical technical tools}
As in the classical variational approach to local Harnack estimates, the following well-known iteration lemmata are needed.
   \begin{lemma}\label{giusti}
   Let $\beta >0$ and let $\{A_j\}_{j \in \mathbb{N}}$ be a sequence of real positive numbers such that 
 $A_{j+1} \leq \textbf{c}_0 b^j A_j^{1+\beta}$   with $\textbf{c}_0 >0$ and $b>1$.
 \\  If $A_0 \leq \textbf{c}_0^{-\frac{1}{\beta}}b^{-\frac{1}{\beta^2}}$, then we have 
  $A_j \leq b^{-\frac{j}{\beta}}A_0$,
   which in particular yields $\displaystyle\lim_{j \rightarrow \infty}A_j=0$.
   \end{lemma}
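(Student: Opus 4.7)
The plan is to prove, by induction on $j$, the sharper statement $A_j \leq b^{-j/\beta} A_0$; the conclusion $A_j \to 0$ will then be immediate since $b>1$ forces $b^{-j/\beta}\to 0$ as $j\to\infty$. The base case $j=0$ is trivial (equality), so the only substantial step is the inductive one.

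For the inductive step, I would assume $A_j \leq b^{-j/\beta} A_0$ and plug this bound into the recurrence, computing
\[
A_{j+1} \,\leq\, \textbf{c}_0\, b^j\, A_j^{1+\beta} \,\leq\, \textbf{c}_0\, b^j \big(b^{-j/\beta} A_0\big)^{1+\beta} \,=\, \textbf{c}_0\, b^{\,j-j(1+\beta)/\beta}\, A_0^{1+\beta} \,=\, \textbf{c}_0\, b^{-j/\beta}\, A_0^{1+\beta}.
\]
To close the induction I need $\textbf{c}_0\, b^{-j/\beta}\, A_0^{1+\beta} \leq b^{-(j+1)/\beta} A_0$, which after dividing by $A_0$ and rearranging reduces to $A_0^{\beta} \leq \textbf{c}_0^{-1}\, b^{-1/\beta}$, i.e. exactly $A_0 \leq \textbf{c}_0^{-1/\beta}\, b^{-1/\beta^2}$, which is the smallness hypothesis.

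There is really no obstacle here: the argument is a purely algebraic induction of the type routinely used in De~Giorgi--Nash--Moser iteration schemes. The only point deserving attention is the exponent bookkeeping, namely the cancellation $j - j(1+\beta)/\beta = -j/\beta$, which is precisely what permits the smallness threshold on $A_0$ to propagate from step $j$ to step $j+1$ without deteriorating in $j$; this is the structural reason for the power $b^{-1/\beta^2}$ appearing in the hypothesis.
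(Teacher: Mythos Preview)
Your induction argument is correct and is the standard proof of this classical iteration lemma. The paper does not actually supply a proof of this statement: it is listed under ``Classical technical tools'' and taken for granted, so there is nothing to compare against beyond noting that your argument is precisely the textbook one.
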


   \begin{lemma}\label{giaquinta_giusti}
   Let $g=g(t)$ be a nonnegative bounded function defined for $0 \leq T_0 \leq t \leq T_1$. Suppose that for $T_0 \leq t <\tau \leq T_1$ we have
   \begin{equation*}
   g(t) \leq \textbf{c}_1(\tau-t)^{-\theta}+\textbf{c}_2 +\zeta g(\tau),
   \end{equation*}
   where $\textbf{c}_1,\textbf{c}_2,\theta$ and $\zeta <1$ are nonnegative constant. Then, there exists a constant $\textbf{c}$ depending only on $\theta$ and $\zeta$, such that for every $\rho,R,T_0\leq \rho <R \leq T_1$, we have
   \begin{equation*}
   g(\rho) \leq \textbf{c}\, \big(\textbf{c}_1(R-\rho)^{-\theta}+\textbf{c}_2\big).
   \end{equation*}
   \end{lemma}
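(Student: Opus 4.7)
The plan is to use the classical geometric-iteration trick of Giaquinta and Giusti. Fix $\rho,R$ with $T_0\le\rho<R\le T_1$, pick a parameter $\tau\in(0,1)$ to be chosen later, and define the increasing sequence
$$
t_0:=\rho,\qquad t_{i+1}:=t_i+(1-\tau)\tau^i(R-\rho),\qquad i\in\mathbb N,
$$
so that $t_i=\rho+(1-\tau^i)(R-\rho)\nearrow R$ and $t_{i+1}-t_i=(1-\tau)\tau^i(R-\rho)$. Applying the hypothesis between the consecutive levels $t_i<t_{i+1}$ gives
$$
g(t_i)\ \le\ \textbf{c}_1(1-\tau)^{-\theta}\tau^{-i\theta}(R-\rho)^{-\theta}+\textbf{c}_2+\zeta\,g(t_{i+1}).
$$

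Iterating this inequality $k$ times, one obtains
$$
g(\rho)=g(t_0)\ \le\ \sum_{i=0}^{k-1}\zeta^{i}\Bigl[\textbf{c}_1(1-\tau)^{-\theta}\tau^{-i\theta}(R-\rho)^{-\theta}+\textbf{c}_2\Bigr]+\zeta^{k}g(t_k).
$$
The key step is now to choose $\tau$ so that $\zeta\tau^{-\theta}<1$, for instance $\tau:=\min\{\,\zeta^{1/(2\theta)},1/2\,\}$, which depends only on $\theta$ and $\zeta$. With this choice the geometric series $\sum_{i\ge 0}(\zeta\tau^{-\theta})^i$ and $\sum_{i\ge 0}\zeta^i$ both converge, and since $g$ is bounded and $\zeta<1$ the remainder $\zeta^kg(t_k)\to 0$ as $k\to\infty$. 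Passing to the limit yields
$$
g(\rho)\ \le\ \textbf{c}_1(1-\tau)^{-\theta}(R-\rho)^{-\theta}\sum_{i=0}^{\infty}(\zeta\tau^{-\theta})^i+\textbf{c}_2\sum_{i=0}^{\infty}\zeta^i\ \le\ \textbf{c}\bigl(\textbf{c}_1(R-\rho)^{-\theta}+\textbf{c}_2\bigr),
$$
with $\textbf{c}=\textbf{c}(\theta,\zeta)$, which is the desired estimate.

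There is no real obstacle here: the argument is purely algebraic and relies solely on (i) the freedom to choose the geometric step $\tau$ so that the absorbing factor $\zeta\tau^{-\theta}$ remains strictly below $1$, and (ii) the a priori boundedness of $g$, used only to kill the tail term $\zeta^{k}g(t_k)$ in the limit $k\to\infty$. The mildly delicate point is to make sure the chosen $\tau$ depends only on the data $\theta,\zeta$ and not on $\rho,R$ or the constants $\textbf{c}_1,\textbf{c}_2$, which is transparent from the explicit choice above.
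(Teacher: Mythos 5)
The paper does not prove this lemma at all: it is quoted as a classical tool (the Giaquinta--Giusti iteration lemma), so you are supplying the standard argument, and your overall strategy --- a geometric interpolation $t_i=\rho+(1-\tau^i)(R-\rho)$ of radii, iteration of the hypothesis, absorption via a convergent geometric series, and the a priori boundedness of $g$ to discard the remainder $\zeta^k g(t_k)$ --- is exactly the right one.

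There is, however, a concrete error in your choice of the ratio $\tau$. The absorption requires $\zeta\tau^{-\theta}<1$, i.e. $\tau>\zeta^{1/\theta}$, so $\tau$ must be taken \emph{large} (close to $1$), not small. Your choice $\tau=\min\{\zeta^{1/(2\theta)},1/2\}$ forces $\tau=1/2$ whenever $\zeta^{1/(2\theta)}\ge 1/2$, and then $\zeta\tau^{-\theta}=2^{\theta}\zeta$, which can exceed $1$ (take $\theta=1$, $\zeta=0.9$: the factor is $1.8$), so the series $\sum_i(\zeta\tau^{-\theta})^i$ you sum diverges and the argument breaks down precisely in the regime $\zeta$ close to $1$ where the lemma is actually needed. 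The fix is immediate: take $\tau=\zeta^{1/(2\theta)}$ alone, which lies in $(0,1)$ whenever $0<\zeta<1$ and $\theta>0$ and gives $\zeta\tau^{-\theta}=\zeta^{1/2}<1$ (equivalently, replace your $\min$ by a $\max$ with $1/2$). The degenerate cases $\zeta=0$ (apply the hypothesis once with $\tau=R$) and $\theta=0$ (iterate along any increasing sequence and sum $\sum\zeta^i$) should be dispatched separately, since $\zeta^{1/(2\theta)}$ is then meaningless. With these corrections your proof is complete.
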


\vspace{2mm}

\section{Some recent results on fractional operators in the Heisenberg group}\label{sec_recent}
   Similarly to what happens in the Euclidean case, a fractional Sobolev embedding can be proved in the non-Euclidean setting of the Heisenberg group. Indeed, the following result holds true,
   \begin{theorem}\label{sobolev}
   	Let~$p>1$ and~$s \in (0,1)$ such that~$sp<Q$. For any measurable compactly supported function~$u : \h^n \rightarrow \r$ there exists a positive constant~$c=c(n,p,s)$ such that
   	$$
   	\|u\|^p_{L^{p^*}(\h^n)} \leq c[u]_{W^{s,p}}^p,
   	$$
   	where~$p^*:=\frac{Qp}{Q -sp}$ is the critical Sobolev exponent.
   \end{theorem}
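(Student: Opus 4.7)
The proof will follow the pattern of the Euclidean result of Di Nezza--Palatucci--Valdinoci (\cite{DPV12}), which adapts cleanly to $\h^n$ because the estimates only rely on the $Q$-homogeneous volume scaling of balls and on the left-invariance of the metric, both of which hold in the Heisenberg group with the Korányi--Folland norm $|\cdot|_{\h^n}$. First, by splitting $u$ into its positive and negative parts (each still compactly supported and with Gagliardo seminorm controlled by that of $u$), I reduce to the case $u \geq 0$. I then consider the dyadic level sets $A_k := \{\xi \in \h^n : u(\xi) > 2^k\}$, $k \in \mathds{Z}$, and introduce the truncations $u_k := \min\{(u - 2^k)_+,\, 2^k\}$, which satisfy $u_k \equiv 0$ on $\h^n \setminus A_k$ and $u_k \equiv 2^k$ on $A_{k+1}$. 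By the layer-cake formula one has
\begin{equation*}
\|u\|_{L^{p^*}(\h^n)}^{p^*} \,\leq\, c \sum_{k \in \mathds{Z}} 2^{kp^*}\bigl(|A_k| - |A_{k+1}|\bigr) \,\leq\, c \sum_{k \in \mathds{Z}} 2^{kp^*}|A_{k+1}|.
\end{equation*}

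The heart of the argument is the following fractional relative-isoperimetric estimate: for every $k$,
\begin{equation*}
[u_k]_{W^{s,p}}^p \,\geq\, 2 \int_{A_{k+1}} \int_{\h^n \setminus A_k} \frac{2^{kp}}{|\eta^{-1}\circ\xi|_{\h^n}^{Q+sp}} \, {\rm d}\eta\, {\rm d}\xi \,\geq\, c\, 2^{kp}\, |A_{k+1}|^{(Q-sp)/Q}.
\end{equation*}
The lower bound on the inner integral is obtained, for each fixed $\xi \in A_{k+1}$, by choosing the radius $r \sim |A_k|^{1/Q}$ so that $|B_r(\xi) \setminus A_k| \geq c\, r^Q$ (possible because $|A_k| \leq |B_r(\xi)|/2$ for $r$ large enough and $|B_r(\xi)| \sim r^Q$), and then restricting the integration to $B_r(\xi) \setminus A_k$, where the denominator is bounded by $r^{Q+sp}$. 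This step uses only the left-invariance of the Haar measure and the scaling $|B_r| \sim r^Q$, both valid on $\h^n$.

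Combining the two displays and exploiting $p^*/p = Q/(Q-sp) > 1$ together with the elementary embedding $\ell^p(\mathds{Z}) \hookrightarrow \ell^{p^*}(\mathds{Z})$, I obtain
\begin{equation*}
\sum_k 2^{kp^*}|A_{k+1}| \,\leq\, c \sum_k \bigl(2^{kp}|A_{k+1}|^{(Q-sp)/Q}\bigr)^{p^*/p} \,\leq\, c \Bigl(\sum_k [u_k]_{W^{s,p}}^p\Bigr)^{p^*/p}.
\end{equation*}

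The last and arguably most delicate step is the superposition bound $\sum_k [u_k]_{W^{s,p}}^p \leq c\,[u]_{W^{s,p}}^p$, which reduces to the pointwise inequality $\sum_k |u_k(\xi) - u_k(\eta)|^p \leq c\, |u(\xi) - u(\eta)|^p$ for a.e.\ $\xi,\eta \in \h^n$. For fixed $\xi,\eta$ with $u(\xi) \geq u(\eta) \geq 0$, only the indices $k$ with $u(\eta) \leq 2^{k+1}$ and $2^k \leq u(\xi)$ can contribute, so the sum is effectively over a finite dyadic range, and a telescoping argument bounds it by the single term $|u(\xi)-u(\eta)|^p$ up to a constant depending only on $p$. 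Putting everything together yields the desired inequality with $p^* = Qp/(Q-sp)$. The main obstacle is the careful bookkeeping in this final pointwise estimate and in the $\ell^p$/$\ell^{p^*}$ summation; the non-Euclidean structure enters only through the Heisenberg ball-volume formula $|B_r|\sim r^Q$ and the left-invariance of the Gagliardo kernel, so no essentially new ingredient beyond the Euclidean proof is required.
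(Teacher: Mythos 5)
First, note that the paper does not actually prove this statement: it is quoted from Theorem~2.5 of~\cite{KS18}, whose proof follows exactly the dyadic--truncation scheme of~\cite{DPV12} that you are reproducing. Your architecture is therefore the right one, but there is a genuine gap at the central step. What your choice of radius $r\sim|A_k|^{1/Q}$ actually yields is
\begin{equation*}
[u_k]_{W^{s,p}}^p \,\geq\, c\,2^{kp}\,|A_{k+1}|\,|A_k|^{-sp/Q},
\end{equation*}
and \emph{not} $c\,2^{kp}|A_{k+1}|^{(Q-sp)/Q}$: since $A_{k+1}\subset A_k$ one has $|A_k|^{-sp/Q}\leq |A_{k+1}|^{-sp/Q}$, so the inequality $|A_{k+1}|\,|A_k|^{-sp/Q}\geq |A_{k+1}|^{(Q-sp)/Q}$ goes in the \emph{wrong} direction whenever $|A_k|\gg|A_{k+1}|$, and it can fail by an arbitrarily large factor. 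The termwise ``fractional relative isoperimetric estimate'' as you state it is therefore false, and the chain $\sum_k 2^{kp}|A_{k+1}|^{(Q-sp)/Q}\leq c\sum_k[u_k]_{W^{s,p}}^p$ does not follow from what you have established.

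The missing ingredient is precisely the discrete rearrangement lemma \cite[Lemma~6.2]{DPV12}: for $T>1$ and a bounded, nonnegative, decreasing sequence $(a_k)_{k\in\mathds{Z}}$ vanishing for all large $k$, one has $\sum_{k}a_k^{(Q-sp)/Q}T^k\leq C\sum_{k:\,a_k\neq0}a_{k+1}a_k^{-sp/Q}T^k$. Applied with $a_k=|A_k|$ and $T=2^p$, it converts the quantity you can actually control, namely $\sum_k 2^{kp}|A_{k+1}|\,|A_k|^{-sp/Q}\leq c\sum_k[u_k]_{W^{s,p}}^p\leq c\,[u]_{W^{s,p}}^p$, into the quantity you need, $\sum_k2^{kp}|A_k|^{(Q-sp)/Q}$, which then dominates $\sum_k2^{kp^*}|A_{k+1}|$ after raising to the power $p^*/p$. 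This lemma is purely combinatorial and transfers to $\h^n$ verbatim. Once you insert it (and add the standard a priori finiteness or truncation step that justifies the Abel summation in your layer-cake display), the remaining pieces of your argument --- the reduction to $u\geq0$, the use of $|B_r|\sim r^Q$ and left-invariance, and the telescoping bound $\sum_k|u_k(\xi)-u_k(\eta)|^p\leq|u(\xi)-u(\eta)|^p$ --- are correct and match the cited proof.
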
 
   For the proof of the previous statement we refer to Theorem~2.5 in~\cite{KS18}, where the authors extend the same strategy used in~\cite{DPV12,PSV13} in order to prove the fractional Sobolev embedding in the Euclidean setting.
   \vspace{2mm}

Before proving the weak Harnack inequality, namely Theorem~\ref{thm_weak}, we also need to recall a boundedness estimate and a Caccioppoli-type one for the weak sub- and super-solution to~\eqref{problema}.
\begin{theorem}[{\bfseries Local boundedness}]{\emph{[Theorem 1.1 in~\cite{MPPP21}].}}\label{thm_bdd}
		Let $s \in (0,1)$ and $p \in (1,\infty)$, let $u \in W^{s,p}(\h^n)$ be a weak subsolution to~\textup{(\ref{problema})}, and let $B_r \equiv B_r(\xi_0)   \subset \Omega$. Then the following estimate holds true, for any $\delta \in (0,1]$,
		\begin{equation}		\label{eq_bdd}
			\sup_{B_{r/2}}u \, \leq\,   \delta \,\textup{Tail}(u_+; \xi_0, r/2) + \frac{\textbf{c}}{\delta^{\gamma}}  \left(\,\dashint_{B_r}u_+^p{\rm d}\,\xi\right )^\frac{1}{p}, 
		\end{equation}
		where $\textup{Tail}(u_+;\xi_0,r/2)$ is defined in~\eqref{tail}, $u_+:=\max\left\{u,\, 0\right\}$ is the positive part of the function $u$,  $\gamma=\frac{(p-1)Q}{sp^2}$, and the constant $\textbf{c}$ depends only on $n,p,s$, $\|f\|_{L^\infty(B_r)}$ and the structural constant $\Lambda$ defined in \eqref{def_lambda}.
	\end{theorem}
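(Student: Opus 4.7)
I would run a De~Giorgi level-set iteration in the spirit of~\cite{DKP16}, adapted to~$\h^n$. The non-Euclidean geometry enters only through the left-invariance of the homogeneous norm~$d_{\rm o}$, the equivalence constant~$\Lambda$ of~\eqref{def_lambda}, and the fractional Sobolev embedding of Theorem~\ref{sobolev}. The parameter~$\delta$ plays the role of an interpolation constant that splits the contribution of~$u_+$ outside~$B_r$ (captured by~$\T(u_+;\xi_0,r/2)$) from its local $L^p$--mass inside~$B_r$.

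\textbf{Caccioppoli and Sobolev.} For a level $k\ge 0$ and radii $\rho'<\rho\le r$, set $w_k:=(u-k)_+$ and pick a smooth cutoff $\varphi$ supported in~$B_\rho$, equal to $1$ on~$B_{\rho'}$, with $|\nabla_{\h^n}\varphi|\lesssim(\rho-\rho')^{-1}$. Testing the weak subsolution inequality of Definition~\ref{solution} against $\psi=w_k\varphi^p$, splitting the resulting double integral into the local piece on $B_\rho\times B_\rho$ and the long-range piece on $B_\rho\times(\h^n\setminus B_\rho)$, and using the standard pointwise $p$-growth estimates for the kernel, one arrives at an inequality of the form
\[
[w_k\varphi]_{W^{s,p}}^{p}\,\le\,\frac{\textbf{c}}{(\rho-\rho')^{sp}}\!\int_{B_\rho}\! w_k^{p}\,{\rm d}\xi\,+\,\textbf{c}\,\|f\|_{L^\infty}k\,|\{w_k>0\}\cap B_\rho|\,+\,\frac{\textbf{c}}{r^{sp}}\T(u_+;\xi_0,r/2)^{p-1}\!\int_{B_\rho}\! w_k\,{\rm d}\xi.
\]
Since $w_k\varphi$ is compactly supported in~$B_\rho$, Theorem~\ref{sobolev} upgrades the left-hand seminorm to the stronger $L^{p^{*}}$--norm in the case $sp<Q$ (with a modified embedding into some $L^{q}$, $q>p$, in the borderline case $sp\ge Q$).

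\textbf{De~Giorgi iteration.} Choose dyadic radii $r_j:=\tfrac{r}{2}+\tfrac{r}{2^{j+1}}$ and levels $k_j:=k(1-2^{-j})$ for
\[
k\,:=\,\delta\,\T(u_+;\xi_0,r/2)\,+\,C\delta^{-\gamma}\Bigl(\,\dashint_{B_r}\! u_+^{p}\,{\rm d}\xi\Bigr)^{\!1/p},
\]
with $C$ to be tuned. Coupling the Caccioppoli-Sobolev bound with the elementary level-set inequality $(k_{j+1}-k_j)^p|\{w_{k_{j+1}}>0\}\cap B_{r_{j+1}}|\le\int_{B_{r_j}}w_{k_j}^p$ and H\"older's inequality between $L^p$ and $L^{p^{*}}$ yields the geometric recursion $A_{j+1}\le \textbf{c}_0\,b^{j}\,A_j^{1+\beta}$ for $A_j:=\dashint_{B_{r_j}}w_{k_j}^p\,{\rm d}\xi$, with $\beta=sp/Q$ and $b=b(n,p,s)>1$. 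Provided $C$ is large enough in terms of $\delta$, the smallness threshold $A_0\le \textbf{c}_0^{-1/\beta}b^{-1/\beta^{2}}$ of Lemma~\ref{giusti} is verified, whence $A_j\to 0$ and $u\le k$ a.e.\ on~$B_{r/2}$, which is exactly~\eqref{eq_bdd}. The summand $\delta\,\T$ in the definition of $k$ is precisely what absorbs the nonlocal tail contribution produced by the Caccioppoli step (the ratio $\T^{p-1}/k^{p-1}$ becomes $\lesssim \delta^{-(p-1)}$, a small constant one can swallow into $\textbf{c}$).

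\textbf{Main obstacle.} The delicate point is the bookkeeping that forces the precise exponent $\gamma=(p-1)Q/(sp^{2})$: it arises from unwinding the smallness condition of Lemma~\ref{giusti} in terms of~$k$ while tracking the factors of $\delta^{-1}$ coming from the level differences $k_{j+1}-k_j\sim k\,2^{-(j+1)}$ and from the Sobolev exponent $\beta=sp/Q$. A secondary, genuinely non-Euclidean issue is the tail estimate, which requires comparing $d_{\rm o}(\eta^{-1}\circ\xi)$ with $d_{\rm o}(\eta^{-1}\circ\xi_{0})$ for $\xi\in B_{\rho'}$ and $\eta\notin B_\rho$; this follows from a pseudo-triangle inequality for $d_{\rm o}$ together with~\eqref{def_lambda}, but has to be carried out carefully in order to keep the final constant depending only on $n,p,s,\|f\|_{L^\infty(B_r)}$, and~$\Lambda$, as claimed.
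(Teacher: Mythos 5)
This theorem is not proved in the present paper: it is imported verbatim as Theorem~1.1 of~\cite{MPPP21}, so there is no in-paper argument to compare against. Your De~Giorgi sketch (Caccioppoli with tail, Sobolev embedding of Theorem~\ref{sobolev}, dyadic levels $k_j$ and Lemma~\ref{giusti}, with the interpolation level $k=\delta\,\T(u_+;\xi_0,r/2)+C\delta^{-\gamma}(\dashint_{B_r}u_+^p)^{1/p}$) is exactly the strategy of that reference, which adapts~\cite{DKP16} to~$\h^n$, and your bookkeeping is consistent: $\beta=sp/Q$ and $\gamma=(p-1)/(p\beta)=(p-1)Q/(sp^2)$.
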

	
  \begin{theorem}[{\bfseries Caccioppoli estimates with tail}]{\emph{[Theorem 1.3 in~\cite{MPPP21}].}}\label{s3_lem2}
  Let~$p>1$, 
  $q \in (1,p)$, $d>0$ and let $u \in W^{s,p}(\h^n)$ be a weak supersolution to problem~\eqref{problema} such that $u \geq 0$ in $B_R(\xi_0) \subset \Omega$. Then, for any $B_r \equiv B_r(\xi_0) \subset B_R(\xi_0)$ and any nonnegative $\p \in C^\infty_0(B_{r})$, the following estimate holds true
  \begin{eqnarray}\label{caccioppoli}
 &&  \int_{B_{r}}\int_{B_{r}}|\eta^{-1} \circ \xi|_{\h^n}^{-Q-sp}|w(\xi)\p(\xi)-w(\eta)\p(\eta)|^p \, {\rm d}\xi{\rm d}\eta \notag\\
&&\quad    \leq \textbf{c} \int_{B_{r}}\int_{B_{r}} |\eta^{-1} \circ \xi|_{\h^n}^{-Q-sp}\big(\max\big\{w(\xi),\,w(\eta)\big\}\big)^p|\p(\xi)-\p(\eta)|^p\,{\rm d}\xi {\rm d}\eta\\
&&\qquad     + \ \textbf{c} \left(\sup_{\xi \in \textup{supp} \, \p}\int_{\h^n \smallsetminus B_{r}}|\eta^{-1}\circ \xi|_{\h^n}^{-Q-sp} \, {\rm d}\eta\notag\, +\, d^{1-p}R^{-sp} [\T(u_-;\xi_0,R)]^{p-1}\right)\notag\\
   &&\qquad   \times \int_{B_{r}}\!w^p(\xi)\p^p(\xi) \, {\rm d}\xi + cd^{1-q}r^Q \|f\|_{L^\infty(B_R)}\,,\notag
  \end{eqnarray}
  where $w:=(u+d)^\frac{p-q}{p}$, and the constant~$\textbf{c}$ depends only on $n$, $p$, $q$, $\|\p\|_{L^\infty({\rm supp}\,\p)}$ and the structural constant $\Lambda$ defined in \eqref{def_lambda}.
  \end{theorem}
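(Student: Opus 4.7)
The plan is to test the weak supersolution inequality in Definition~\ref{solution} with the nonnegative admissible function
$$\psi(\xi) := (u(\xi)+d)^{1-q}\p(\xi)^p,$$
which belongs to $W^{s,p}_0(B_r)$ since $u \geq 0$ on $B_R$ and $\p \in C^\infty_0(B_r)$. Splitting the resulting double integral over $\h^n \times \h^n$ into the local block $B_r \times B_r$ and the cross block $B_r \times (\h^n \setminus B_r)$ (counted twice by symmetry, since $\psi$ vanishes outside $B_r$), the three summands on the right-hand side of~\eqref{caccioppoli} will arise as three separate contributions.

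For the local block the central ingredient is the pointwise algebraic lemma
$$-|a-b|^{p-2}(a-b)\bigl((a+d)^{1-q}\alpha^p-(b+d)^{1-q}\beta^p\bigr) \geq c_1\bigl|W(a)\alpha - W(b)\beta\bigr|^p - c_2 \max\{W(a),W(b)\}^p|\alpha-\beta|^p,$$
valid for $a,b,\alpha,\beta \geq 0$, $1<q<p$, with $W(t):=(t+d)^{(p-q)/p}$. Integrated against the kernel $\dd(\eta^{-1}\circ\xi)^{-Q-sp}$, which is comparable to $\snr{\eta^{-1}\circ\xi}_{\h^n}^{-Q-sp}$ up to a power of $\Lambda$ by~\eqref{def_lambda}, this reproduces the Caccioppoli integrand with $w\p$ on the left and the $\max\{w,w\}^p|\p-\p|^p$ error on the right of~\eqref{caccioppoli}. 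For the cross block one decomposes $u(\eta) = u_+(\eta) - u_-(\eta)$: the $u_+$ part together with $u \geq 0$ in $B_R$ yields, after discarding a favorable sign, the $\sup_{\xi\in\textup{supp}\,\p}\int_{\h^n\setminus B_r}\snr{\eta^{-1}\circ\xi}_{\h^n}^{-Q-sp}\,\mathrm{d}\eta$ factor multiplying $\int_{B_r}w^p\p^p$, while the $u_-$ part (supported outside $B_R$) is absorbed through Young's inequality with parameter $d$ to produce precisely $d^{1-p}R^{-sp}[\T(u_-;\xi_0,R)]^{p-1}\int_{B_r}w^p\p^p$. The forcing contribution is estimated trivially by $\|f\|_{L^\infty(B_R)}d^{1-q}\int_{B_r}\p^p\,\mathrm{d}\xi \leq c\,d^{1-q}r^Q\|f\|_{L^\infty(B_R)}$, closing the argument.

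The main obstacle is the pointwise algebraic inequality above: its proof splits into the cases $a \geq b$ and $a < b$ and, within each, balances the ``good'' quantity $|W(a)\alpha - W(b)\beta|^p$ against the ``bad'' correction $\max\{W,W\}^p|\alpha-\beta|^p$ via Young's inequality with a small parameter; the bounds $1 < q < p$ are needed, respectively, for $\psi$ to be bounded as $u \to 0$ and for $W$ to carry a positive exponent. A secondary subtlety is the calibration of $d$ in the tail step, where Young's inequality pairs $u_-(\eta)^{p-1}$ with $w(\xi)^p$ and thereby forces the specific weight $d^{1-p}$ on the tail term (distinct from the $d^{1-q}$ attached to the forcing). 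The non-Euclidean geometry is essentially transparent here, entering only through $\Lambda$ in~\eqref{def_lambda}, because the kernel depends solely on the homogeneous norm and all manipulations above are pointwise in $\xi$ and $\eta$.
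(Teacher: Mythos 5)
The paper never proves this statement: it is imported verbatim as Theorem~1.3 of~\cite{MPPP21}, so there is no internal proof to compare against, and your argument is exactly the standard route that the cited proof follows (after Di Castro--Kuusi--Palatucci): test the supersolution inequality with $\tilde u^{1-q}\p^p$, $\tilde u=u+d$, apply the pointwise algebraic lemma on the block $B_r\times B_r$, and control the cross block by splitting off $u_-$, which vanishes on $B_R$ and hence produces the tail at radius~$R$. The only cosmetic point is that the weight $d^{1-p}$ on the tail term comes from the elementary bound $\tilde u^{1-q}=\tilde u^{1-p}\,\tilde u^{p-q}\le d^{1-p}w^p$ rather than from a Young-type absorption, and that passing from the kernel $|\eta^{-1}\circ\xi|_{\h^n}^{-Q-sp}$ to $|\eta^{-1}\circ\xi_0|_{\h^n}^{-Q-sp}$ for $\eta\notin B_R$ uses the (quasi-)triangle inequality for the homogeneous norm; otherwise the outline is complete and correct.
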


\vspace{0mm}  
   \section{Proofs of the Harnack estimates with tail}\label{sec_weak}
%
   %
   %
For the sake of readability, from now on we adopt the following notation,
   \begin{equation}\label{d_nu}
   	{\rm d} \nu := d_{\rm o}(\eta^{-1} \circ \xi)^{-Q-sp} \,{\rm d}\xi{\rm d}\eta.
   \end{equation}
     
   \subsection{Expansion of positivity}
   In this section we prove a careful estimate of the weak supersolutions to~\eqref{problema}, by generalizing the original strategy applied in the local framework as well as that in the fractional Euclidean one. Clearly, we have to take into account the needed modifications to handle the difficulties given in the fractional Heisenberg framework; also, the presence of the nonhomogeneous datum~$f$ would require further care when some iterative argument will be called.

   \begin{lemma}\label{s3_lem1}
   Let  with $s \in (0,1)$ and $p \in (1,\infty)$, and let $u\in W^{s,p}(\h^n)$ be a weak supersolution to problem~\eqref{problema} such that $u \geq 0$ in $B_R\equiv B_R(\xi_0)\subset \Omega$. Let $k \geq 0$, and suppose that there exists $\sigma \in (0,1]$ such that
   \begin{equation}\label{s3_lem_hyp}
   \big|B_{6r} \cap \big\{u \geq k\big\}\big| \, \geq\,  \sigma|B_{6r}|,
   \end{equation}
   for some $r>0$ such that $B_{8r}  \subset B_R$. Then there exists a constant $\bar{\textbf{c}} \equiv \bar{\textbf{c}}\big(n,p,s,\|f\|_{L^\infty(B_R)}, \Lambda\big)$ such that
  \begin{equation}\label{s3_lem1_thesis}
  \left|B_{6r} \cap \left\{u \leq 2\delta k - \frac{1}{2}\left(\frac{r}{R}\right)^\frac{sp}{p-1}\T(u_-;\xi_0,R)\right\}\right| 
  \, \leq\,  \frac{\bar{\textbf{c}}}{\sigma \log\left(\frac{1}{2\delta}\right)}|B_{6r}|
  \end{equation}
  holds for all $\delta \in (0,1/4)$, where $\T(\cdot)$ is defined in~\eqref{tail}.
  \end{lemma}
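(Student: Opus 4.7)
\medskip
\noindent\textbf{Proof plan.}\;The strategy is the logarithmic-Caccioppoli approach of~\cite{DKP14,DKP16} adapted to the sub-Riemannian geometry of $\h^n$, and enhanced to handle the non-homogeneous datum~$f$. In short: I would derive a fractional log-energy estimate on $B_{6r}$, invoke a fractional Poincaré inequality to bound the oscillation of $V:=\log(u+d)$, and conclude by a Chebyshev-type level-set comparison using the measure hypothesis~\eqref{s3_lem_hyp}.

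First, I would test Definition~\ref{solution} against
$$
\psi(\xi):=\phi^p(\xi)\bigl(u(\xi)+d\bigr)^{1-p},
$$
where $\phi\in C_0^\infty(B_{7r})$ is a standard cutoff with $\phi\equiv 1$ on $B_{6r}$, $0\le\phi\le 1$, $|\nabla_{\h^n}\phi|\le c/r$, and the positive parameter
$$
d:=\delta k+\tfrac{1}{2}\Bigl(\tfrac{r}{R}\Bigr)^{\frac{sp}{p-1}}\T(u_-;\xi_0,R)+r^{\frac{sp}{p-1}}\|f\|_{L^\infty(B_R)}^{\frac{1}{p-1}}
$$
is designed to neutralise both the tail and the $f$-contribution (the last summand only affects the constant $\bar{\textbf c}$). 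Applying the by now classical pointwise algebraic inequality
$$
|a-b|^{p-2}(a-b)\bigl(\phi^p(\xi)(a+d)^{1-p}-\phi^p(\eta)(b+d)^{1-p}\bigr)\ge c_1\max\{\phi(\xi),\phi(\eta)\}^p\Bigl|\log\tfrac{a+d}{b+d}\Bigr|^p-c_2|\phi(\xi)-\phi(\eta)|^p,
$$
for $a,b\ge 0$ (which is geometry-free, and already exploited in the proof of Theorem~\ref{s3_lem2}), and then proceeding as in the proof of Theorem~\ref{s3_lem2} with $\log(u+d)$ in place of the truncation of $u$, yields the logarithmic estimate
$$
\int_{B_{6r}}\!\!\int_{B_{6r}}\Bigl|\log\tfrac{u(\xi)+d}{u(\eta)+d}\Bigr|^p\,{\rm d}\nu\le c\,r^{Q-sp}.
$$
Here the long-range integral over $\h^n\setminus B_{7r}$ is controlled via $\int_{\h^n\setminus B_{7r}}|\eta^{-1}\circ\xi|_{\h^n}^{-Q-sp}\,{\rm d}\eta\le c r^{-sp}$ for $\xi\in B_{6r}$, together with $d^{1-p}R^{-sp}[\T(u_-;\xi_0,R)]^{p-1}\le c\,r^{-sp}$ by the choice of $d$; the forcing term is bounded by $c\|f\|_{L^\infty(B_R)}d^{1-p}r^Q\le c\,r^{Q-sp}$ by the same reason.

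Next, I would set $V:=\log(u+d)$ on $B_{6r}$ and invoke a fractional Poincaré inequality on the Heisenberg ball $B_{6r}$ (which follows, via standard arguments, from Theorem~\ref{sobolev}) to deduce
$$
\dashint_{B_{6r}}|V-(V)_{B_{6r}}|\,{\rm d}\xi\le c.
$$
Denoting $m:=(V)_{B_{6r}}$, $A:=\{u\ge k\}\cap B_{6r}$ and $E:=\{u\le 2\delta k-\tfrac12(r/R)^{\frac{sp}{p-1}}\T(u_-;\xi_0,R)\}\cap B_{6r}$, one checks directly from the definition of $d$ that $V\ge\log k$ on $A$ and $V\le\log(3\delta k)$ on $E$, so that $\log k-V\ge\log(1/(3\delta))$ on $E$. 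A dichotomy on whether $m\le\log k-\tfrac12\log(1/(3\delta))$ or not, combined with Chebyshev's inequality applied either on $A$ (using $|A|\ge\sigma|B_{6r}|$) or on $E$, produces in the first case a trivial estimate (because $\sigma\log(1/\delta)$ stays bounded, hence $\bar{\textbf c}/(\sigma\log(1/(2\delta)))\ge 1$), and in the second case the desired bound
$$
|E|\le\frac{2c}{\log(1/(3\delta))}|B_{6r}|\le\frac{\bar{\textbf c}}{\sigma\log(1/(2\delta))}|B_{6r}|.
$$

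The main obstacle will be the log-Caccioppoli step: while the algebraic core is insensitive to the ambient geometry, the precise control of the nonlocal integral across $\h^n\setminus B_{7r}$ and of the forcing term $f$ requires a careful choice of $d$ and the equivalence~\eqref{def_lambda} between $d_{\rm o}$ and the Koranyi gauge $|\cdot|_{\h^n}$ entering the tail~\eqref{tail}; in particular the absorption of $\|f\|_{L^\infty(B_R)}$ into the constant $\bar{\textbf c}$, which is new with respect to~\cite{DKP14}, has to be tracked explicitly in every chain of estimates.
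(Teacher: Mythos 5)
Your overall architecture (logarithmic Caccioppoli estimate, fractional Poincar\'e inequality, Chebyshev comparison of the two level sets) is exactly the paper's; the differences are in how the datum $f$ is absorbed, and that is where the proof breaks. A minor point first: the displayed pointwise inequality is stated in the wrong direction. The quantity $|a-b|^{p-2}(a-b)\bigl((a+d)^{1-p}\phi^p(\xi)-(b+d)^{1-p}\phi^p(\eta)\bigr)$ is \emph{nonpositive} when $\phi(\xi)=\phi(\eta)$, so it cannot be bounded below by $c_1\bigl|\log\tfrac{a+d}{b+d}\bigr|^p$ minus a cutoff error; the usable estimate is the upper bound by $-c_1\max\{\phi(\xi),\phi(\eta)\}^p\bigl|\log\tfrac{a+d}{b+d}\bigr|^p+c_2|\phi(\xi)-\phi(\eta)|^p$, which, inserted into the supersolution inequality, yields the log-energy bound. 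Since you draw the correct conclusion, this is presumably a typo.

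The genuine gap is in the Chebyshev step. Your $d$ contains the summand $F:=r^{\frac{sp}{p-1}}\|f\|_{L^\infty(B_R)}^{\frac{1}{p-1}}$, so on $E=\{u\le 2\delta k-\tfrac12(r/R)^{\frac{sp}{p-1}}\T(u_-;\xi_0,R)\}\cap B_{6r}$ you only get $u+d\le 3\delta k+F$, not $3\delta k$; the claim ``$V\le\log(3\delta k)$ on $E$'' is false whenever $F>0$. The resulting gap $\log k-\log(3\delta k+F)$ is \emph{not} bounded below by $c\log(1/\delta)$ when $k\lesssim F$ (for $k=F$ it is $O(1)$ uniformly in $\delta$), so the dichotomy argument does not produce the factor $1/\log(1/(2\delta))$. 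You cannot hide this in $\bar{\textbf{c}}$ either: the estimate must remain nontrivial for arbitrarily small $\delta$, which is exactly how it is used in Lemma~\ref{s3_lem3}, where $\delta$ is taken exponentially small in $1/\sigma$. What your argument actually proves is a variant of the lemma with the level shifted by an additional $-cF$, in the spirit of the $\varepsilon$-corrected statement for minimizers in~\cite{DKP16} --- a different statement from the one at hand. The paper instead takes $d$ to be the tail term alone, keeps the forcing contribution $c\,r^{Q}d^{1-p}\|f\|_{L^\infty(B_R)}$ on the right-hand side of the logarithmic estimate, and absorbs it into the constant; this is precisely why $\bar{\textbf{c}}$ is allowed to depend on $\|f\|_{L^\infty(B_R)}$. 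To prove the statement as written you must either follow that route or justify why the regime $k\lesssim F$ can be dispensed with, which your proposal does not do.
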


  \begin{proof}
  We firstly notice that with no loss of generality one can suppose $\T(u_-;\xi_0,R)\!\!~>~\!\!0$, otherwise~\eqref{s3_lem1_thesis} plainly follows from~\eqref{s3_lem_hyp} by choosing the constant~$\bar{c}$   large enough.
  	
Take a smooth function~$\phi\in C^\infty_0(B_{7r})$ such that
  $$
  0 \leq \p \leq 1, \quad \p \equiv 1 \quad \mbox{in } B_{6r}, \quad \mbox{and} \quad |\nabla_{\h^n} \p| \leq c/r\,,
  $$
an set
\begin{equation}\label{8.0}
  d := \frac{1}{2}\left(\frac{r}{R}\right)^\frac{sp}{p-1}\T(u_-;\xi_0,R), \quad \mbox{and} \quad \tilde{u}:=u+d\,.
\end{equation}
Now, choose $\psi := \tilde{u}^{1-p}\p^p$ as a test function in Definition~\ref{solution} by making use of the fact that $u$ is a supersolution. It follows
  \begin{eqnarray}\label{s3 lem 1}
   && \int_{\h^n}f(\xi,u)\tilde{u}^{1-p}(\xi)\p^p(\xi){\rm d} \xi  \notag \\*[0.8ex]
   && \qquad\leq\  \int_{B_{8r}}\int_{B_{8r}} |\tilde{u}(\xi)-\tilde{u}(\eta)|^{p-2}\big(\tilde{u}(\xi)-\tilde{u}(\eta)\big)\big(\tilde{u}^{1-p}(\xi)\p^p(\xi)-\tilde{u}^{1-p}(\eta)\p^p(\eta)\big) \,{\rm d}\nu\notag\\*
   & &\qquad\quad +\, \int_{\h^n \smallsetminus  B_{8r}}\int_{B_{8r}}|\tilde{u}(\xi)-\tilde{u}(\eta)|^{p-2}\big(\tilde{u}(\xi)-\tilde{u}(\eta)\big)\tilde{u}^{1-p}(\xi)\p^p(\xi) \,{\rm d}\nu\notag\\*[0.8ex]
   & & \qquad\quad -\, \int_{B_{8r}}\int_{\h^n \smallsetminus B_{8r}}|\tilde{u}(\xi)-\tilde{u}(\eta)|^{p-2}\big(\tilde{u}(\xi)-\tilde{u}(\eta)\big)\tilde{u}^{1-p}(\eta)\p^p(\eta) \,{\rm d}\nu\notag\\*[0.8ex]
   & &\qquad =: \ I_1 + I_2+I_3\,,
  \end{eqnarray} 
  where we have also used the definition of~$\tilde{u}$ and the translation invariant property of the fractional seminorms.
   
We start by estimating the integral contribution in the left-hand side. Thanks to the hypothesis on $f$ and in view of the definition of~$\psi$, we have get
$$
   \int_{\h^n}f(\xi,u)\tilde{u}^{1-p}(\xi)\p^p(\xi)\,{\rm d}\xi 
   \, \geq\,  \int_{B_{6r}}\big(f(\xi,u)\big)_+\tilde{u}^{1-p}(\xi)\,{\rm d} \xi 
   \,-\, cr^{Q}d^{1-p}\|f\|_{L^\infty (B_R)}.
$$
  Now, we focus on the terms on the right-hand side. We can treat the first integral~$I_1$ as in the proof of the Logarithmic Lemma~1.4 in~\cite{MPPP21}, so that
  $$
  I_1 \, \leq\, -\frac{1}{c}\int_{B_{6r}}\int_{B_{6r}} \, \left|\log \left(\frac{\tilde{u}(\xi)}{\tilde{u}(\eta)}\right)\right|^p \,{\rm d}\nu +c r^{Q-sp}\,.  
  $$
  We now proceed to estimating the second integral~$I_2$ in~\eqref{s3 lem 1}, in turn obtaining an estimate for~$I_3$ as well. We split~$I_2$ as follows,
  \begin{eqnarray}\label{9.0}
   I_2 & =&  \int_{\h^n \smallsetminus B_{8r}\cap \{\tilde{u}(\eta)<0\}} \int_{B_{8r}} |\tilde{u}(\xi)-\tilde{u}(\eta)|^{p-2}\big(\tilde{u}(\xi)-\tilde{u}(\eta)\big)\tilde{u}^{1-p}(\xi)\p^p(\xi) \, {\rm d}\nu\notag\\*
   && \ + \ \int_{\h^n \smallsetminus B_{8r}\cap \{\tilde{u}(\eta)\geq 0\}} \int_{B_{8r}} |\tilde{u}(\xi)-\tilde{u}(\eta)|^{p-2}\big(\tilde{u}(\xi)-\tilde{u}(\eta)\big)\tilde{u}^{1-p}(\xi)\p^p(\xi) \, {\rm d}\nu\notag\\*[0.8ex]
    & =:&  I_{2,1}+I_{2,2}.
  \end{eqnarray}
  The contribution in~$I_{2,1}$ can be estimated as follows
  \begin{eqnarray*}
   I_{2,1} & = & \int_{\h^n \smallsetminus B_{8r}} \int_{B_{8r}} |\tilde{u}(\xi)+(\tilde{u}(\eta))_-|^{p-1}\tilde{u}^{1-p}(\xi)\p^p(\xi) \, {\rm d}\nu\\*[0.8ex]
   & \leq &cr^Q \int_{\h^n \smallsetminus B_{8r}} \, \left(1+\frac{(u(\eta))_-}{d}\right)^{p-1} \, |\eta^{-1} \circ \xi_0|_{\h^n}^{-Q-sp} \, {\rm d}\eta \\*[0.8ex]
   &  \leq  &cr^{Q-sp}+cr^Qd^{1-p}R^{-sp}\T(u_-;\xi_0,R)^{p-1}\\*[0.8ex]
   & = & c(n,s,p,\Lambda)r^{Q-sp}\,,
   \end{eqnarray*}
   where we have used that, for any $\xi \in B_{7r}$ and any $\eta \in \h^n \smallsetminus B_{8r}$,
  \begin{eqnarray}\label{9.1}
	\frac{|\eta^{-1} \circ \xi_0|_{\h^n}}{|\eta^{-1} \circ \xi|_{\h^n}} & \leq & \frac{(|\eta^{-1} \circ \xi|_{\h^n}+|\xi \circ \xi_0|_{\h^n})}{|\eta^{-1}\circ \xi|_{\h^n}}\notag\\
	& \leq  & 1 + \frac{7r}{|\eta^{-1}\circ \xi_0|_{\h^n}-|\xi^{-1}\circ \xi_0|_{\h^n}}
	\  \leq\  8.
   \end{eqnarray}
   
   For~$I_{2,2}$, notice that~$\tilde{u} \geq 0$ in $B_{7r}$. Then, $\tilde{u}(\xi)-\tilde{u}(\eta) \leq \tilde{u}(\xi)$, for every  $\eta \in \h^n \smallsetminus B_{8r}\cap \big\{\tilde{u}(\eta)\geq 0\big\}$ and every $\xi \in B_{7r}$; we get
\begin{equation}\label{9.2}
   I_{2,2} \,\leq\, c r^{Q-sp}.
\end{equation}
  Thus, by combining~\eqref{9.0} with \eqref{9.1} and~\eqref{9.2}, we arrive at
   $$
   I_2 + I_3 \, \leq\, cr^{Q-sp}\,,
   $$
   for a constant~$c$ depending only on $n$, $p$, $s$ and $\Lambda$. All in all, 
   \begin{equation*}
    \int_{B_{6r}}\int_{B_{6r}} \, \left|\log \left(\frac{\tilde{u}(\xi)}{\tilde{u}(\eta)}\right)\right|^p \, {\rm d}\nu
     +  \int_{B_{6r}}\big(f(\xi,u)\big)_+\tilde{u}^{1-p}(\xi){\rm d} \xi
     \  \leq\  cr^{Q-sp}+ cr^{Q}d^{1-p}\|f\|_{L^\infty(B_R)}\,.
   \end{equation*}  
   
   Now, recalling the particular choice of $d$ in~\eqref{8.0}, one can deduce the  existence of a constant $c:=c(n,p,s,\|f\|_{L^\infty(B_R)},\Lambda)$ such that 
   \begin{equation}\label{s3 lem 2}
   	 \int_{B_{6r}}\int_{B_{6r}} \, \left|\log \left(\frac{\tilde{u}(\xi)}{\tilde{u}(\eta)}\right)\right|^p \, {\rm d}\nu + \int_{B_{6r}}(f(\xi,u))_+\tilde{u}^{1-p}(\xi) \, {\rm d} \xi 
	 \ \leq \ cr^{Q-sp}.
   \end{equation}
   
   For any $\delta \in (0,1/4)$, define
   $$
    v := \left(\min \left\{\log \frac{1}{2 \delta},\, \log \frac{k+d}{\tilde{u}}\right\}\right)_+.
   $$
    Since $v$ is a truncation of $\log(k+d)-\log(\tilde{u})$, the estimate in~\eqref{s3 lem 2} yields
   $$
   \int_{B_{6r}}\int_{B_{6r}}|v(\xi)-v(\eta)|^p \, {\rm d}\nu \leq \int_{B_{6r}}\int_{B_{6r}} \, \left|\log \left(\frac{\tilde{u}(\xi)}{\tilde{u}(\eta)}\right)\right|^p \, {\rm d}\nu
   \  \leq\ cr^{Q-sp}
   $$
    By the fractional Poincar\'e inequality (see, e.~\!g., Section~2.2 in~\cite{Min03}), we have 
   \begin{equation}\label{s3 lem 3}
   \int_{B_{6r}}|v(\xi)-(v)_{B_{6r}}|^p \, {\rm d}\xi
   \ \leq\ c r^{sp} \int_{B_{6r}}\int_{B_{6r}}|v(\xi)-v(\eta)|^p \, {\rm d}\nu 
   \ \leq\ c r^Q\,.
   \end{equation}
  
  Notice that, in view of the  definitions of $v$ and $\tilde{u}$, we have
    $$
    \big\{v=0\big\} = \big\{\tilde{u} \geq k+d\big\}=\big\{u \geq k\big\}\,.
    $$
    Therefore, the inequality in~\eqref{s3_lem_hyp} yields
    $$
    \big|B_{6r}\cap \big\{v=0\big\}\big| \geq \sigma|B_{6r}|.
    $$
   Also, notice that
   \begin{eqnarray*}
    \log \frac{1}{2 \delta}& = & \frac{1}{|B_{6r}\cap \{v=0\}|}\int_{B_{6r}\cap \{v=0\}}\left(\log\frac{1}{2\delta}-v(\xi)\right) \,{\rm d}\xi\\*[0.8ex]
 &  \leq & \frac{1}{\sigma}\left[\log\frac{1}{2\delta}-(v)_{B_{6r}}\right]\,.
    \end{eqnarray*}
which integrated on~$B_{6r}\cap \{v=\log(1/2\delta)\}$ gives
   \begin{eqnarray*}
    \left|B_{6r}\cap \left\{v = \log \left(\frac{1}{2\delta}\right)\right\}\right|\log \left(\frac{1}{2\delta}\right) & \leq &\frac{1}{\sigma}\int_{B_{6r}}|v(\xi)-(v)_{B_{6r}}| \, {\rm d}\xi\\*[0.8ex]
    & \leq& \frac{1}{\sigma}|B_{6r}|^\frac{1}{p'}\left( \,\int_{B_{6r}}|v(\xi)-(v)_{B_{6r}}|^p \, {\rm d}\xi\right)^\frac{1}{p}\\*[0.8ex]
    & \leq &\frac{c}{\sigma}|B_{6r}|\,,
    \end{eqnarray*}
    where we also used~\eqref{s3 lem 3} and the H\"older inequality.
    
    From all the previous estimates we finally arrive at
    $$
    \big|B_{6r}\cap \big\{\tilde{u}\leq2\delta(k+d)\big\}\big| 
    \ \leq \ \frac{c}{\sigma}\frac{1}{\log\frac{1}{2\delta}}|B_{6r}|\,,
    $$
    so that the desired inequality plainly follows by inserting the definition of~$d$ given in~\eqref{8.0}.
    \end{proof}

\vspace{2mm}
   We are now in the position to refine the estimates in order to prove the main result of this section; i.~\!e.,
   \begin{lemma}\label{s3_lem3}
   Let  $s \in (0,1)$ and $p \in (1,\infty)$ and let $u \in W^{s,p}(\h^n)$ be a weak supersolution to problem~\eqref{problema} such that $u \geq 0$ in $B_R(\xi_0) \subset \Omega$. 

   Let $k \geq 0$ and suppose that there exists~$\sigma \in (0,1]$ such that
   $$
   |B_{6r} \cap \{u \geq k\}| \geq \sigma |B_{6r}|,
   $$
   for some $r$ satisfying $0 < 6r<R$.  Then, there exists a constant~$\delta \in (0,1/4)$ depending on $n,s,p$,~$\sigma$ and $\Lambda$, such that
   \begin{equation}\label{s3_lem3_1}
    \inf_{B_{4r}}u \, \geq\, \delta k -\left(\frac{r}{R}\right)^\frac{sp}{p-1} \T(u_-;\xi_0,R)\,.
    \end{equation}
   \end{lemma}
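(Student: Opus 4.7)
The plan is to carry out a De~Giorgi iteration on the level sets of $u$ from below, taking Lemma~\ref{s3_lem1} as the initial smallness input and the Caccioppoli inequality of Theorem~\ref{s3_lem2} as the iteration engine, in the spirit of the Euclidean approach of~\cite{DKP14} but with careful bookkeeping of the tail contribution and of the non-homogeneous datum~$f$.

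First, I would translate the problem so that the tail disappears into a single absorption constant. Set
\begin{equation*}
d \,:=\, \tfrac{1}{2}\left(\tfrac{r}{R}\right)^{sp/(p-1)}\T(u_-;\xi_0,R), \qquad \tilde u \,:=\, u+d,
\end{equation*}
so that $\tilde u\geq 0$ on $B_R$ and the negative tail of~$u$ outside~$B_R$ gets controlled by~$d^{p-1}R^{-sp}$. The target estimate~\eqref{s3_lem3_1} is then equivalent to $\inf_{B_{4r}}\tilde u\geq \delta k$ for a suitable~$\delta\in(0,1/4)$.

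Second, I would set up the iteration on a shrinking family of balls. Define $r_j:=4r+2r\cdot 2^{-j}$, $k_j:=\delta(k+d)(1+2^{-j})$, and $w_j:=(k_j-\tilde u)_+$. Choose cutoffs $\varphi_j\in C^\infty_0(B_{(r_j+r_{j+1})/2})$ with $\varphi_j\equiv 1$ on $B_{r_{j+1}}$ and $|\nabla_{\h^n}\varphi_j|\lesssim 2^j/r$. Testing the weak supersolution formulation against $\psi_j\simeq w_j^{p-1}\varphi_j^p$ (equivalently, applying Theorem~\ref{s3_lem2} in the translated form) one obtains a Caccioppoli-type bound for $w_j\varphi_j$ in the Gagliardo seminorm. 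Combining with the fractional Sobolev embedding of Theorem~\ref{sobolev}, and controlling the long-range term
$$
\sup_{\xi\in\text{supp}\,\varphi_j}\int_{\h^n\smallsetminus B_{r_j}}|\eta^{-1}\circ\xi|_{\h^n}^{-Q-sp}\,{\rm d}\eta\,+\,d^{1-p}R^{-sp}[\T(u_-;\xi_0,R)]^{p-1}\,\lesssim\,r^{-sp}\,2^{jsp}
$$
via the particular choice of $d$, I would derive the standard recursion
\begin{equation*}
A_{j+1} \,\leq\, \mathbf c_0\, b^{j}\,A_j^{1+\beta}, \qquad A_j:=|B_{r_j}\cap\{\tilde u\leq k_j\}|,
\end{equation*}
for some $\beta>0$ and $b>1$ depending only on $n,s,p,\Lambda$ and $\|f\|_{L^\infty(B_R)}$. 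The initial smallness $A_0\leq \mathbf c_0^{-1/\beta}b^{-1/\beta^2}|B_{6r}|$ is secured by Lemma~\ref{s3_lem1}, provided $\delta$ is chosen sufficiently small in terms of $\sigma,n,s,p,\Lambda$ and $\|f\|_{L^\infty(B_R)}$ so as to defeat the $1/\log(1/(2\delta))$ factor. Then Lemma~\ref{giusti} gives $A_j\to 0$, hence $\tilde u\geq \delta(k+d)$ a.e.\ on $B_{4r}$, which, since $d\geq 0$, yields~\eqref{s3_lem3_1}.

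The main obstacle I anticipate is the uniform absorption of the two extra terms that distinguish the present setting from the homogeneous Euclidean one: the nonlocal tail remainder $d^{1-p}R^{-sp}[\T(u_-;\xi_0,R)]^{p-1}$ in Theorem~\ref{s3_lem2}, which is exactly engineered to be $O(r^{-sp})$ by the definition of $d$, and the forcing term $d^{1-q}r^Q\|f\|_{L^\infty(B_R)}$, which must be absorbed by slightly enlarging $d$ without spoiling the scaling $d\sim(r/R)^{sp/(p-1)}\T(u_-)$; both enter the constant $\mathbf c_0$ in the iteration and not its structural powers. A secondary technical subtlety is the borderline/supercritical regime $sp\geq Q$ where Theorem~\ref{sobolev} fails: there the iteration must be run with a fractional Sobolev embedding at differentiability $s-\varepsilon$ for some $\varepsilon\in(s-Q/p,s)$, exactly as will be needed in the proof of Theorem~\ref{thm_weak}, at the cost of quantitatively worsening~$\beta$ but preserving the final structure.
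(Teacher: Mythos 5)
Your proposal follows essentially the same route as the paper: a De Giorgi iteration on shrinking balls and decreasing levels, seeded by the measure estimate of Lemma~\ref{s3_lem1}, closed by Lemma~\ref{giusti}, with the $s_\varepsilon$-Sobolev trick for $sp\geq Q$; the only cosmetic difference is that you absorb the tail by shifting to $\tilde u=u+d$, whereas the paper keeps $u$ itself and instead reduces WLOG to the case $(r/R)^{sp/(p-1)}\T(u_-;\xi_0,R)\leq\delta k$, which is then used to dominate the long-range term by $\ell_j^{p-1}$. Two small points to repair when writing this up: the Caccioppoli engine here is not Theorem~\ref{s3_lem2} (that statement concerns the negative powers $w=(u+d)^{(p-q)/p}$ used in the Moser iteration for the weak Harnack inequality) but a direct test with $\psi=(\ell_j-u)_+\varphi_j^p$ (first power of the truncation, not $w_j^{p-1}$), and you still need the WLOG reduction above, since without it the inclusion $\{\tilde u\le k_0\}\subset\{\tilde u\le 2\delta k\}$ required to invoke Lemma~\ref{s3_lem1} can fail (although in that regime the asserted inequality is vacuous).
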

   \begin{proof}
   We immediately notice that in the case when~$k=0$, the inequality~\eqref{s3_lem3_1} does trivially hold, since $u \geq 0$ in $B_R$. Also, with no loss of generality, we can assume that for $\delta>0$
        \begin{equation}\label{s3_lem3_2}
     \left(\frac{r}{R}\right)^\frac{sp}{p-1} \T(u_-;\xi_0,R)\, \leq\, \delta k.
     \end{equation}
   Now, for any ${r} \leq \rho \leq {6r}$, take a smooth function $\p \in C^\infty_0(B_\rho)$ such that $0 \leq \p \leq 1$, and consider the test function $\psi :=w_-\p^p$, where $w_-:=(\ell-u)_+$, for any $\ell \in (\delta k, 2 \delta k)$.
   Testing Definition~\ref{solution} with such a smooth function~$\psi$ yields
   \begin{eqnarray}\label{s3_lem3_0}
    &&\int_{B_\rho}f(\xi,u)w_-(\xi)\p^p(\xi)\,{\rm d}\xi \nonumber\\*
    &&\quad \leq   \int_{B_\rho}\int_{B_\rho}|u(\xi)-u(\eta)|^{p-2}\big(u(\xi)-u(\eta)\big) \big(w_-(\xi)\p^p(\xi)-w_-(\eta)\p^p(\eta)\big) \,{\rm d}\nu\notag\\*
      &&\qquad + \int_{\h^n \smallsetminus B_\rho} \int_{B_\rho}|u(\xi)-u(\eta)|^{p-2}\big(u(\xi)-u(\eta)\big)w_-(\xi)\p^p(\xi) \,{\rm d}\nu\notag\\*
      &&\qquad  - \int_{B_\rho} \int_{\h^n \smallsetminus B_\rho} |u(\xi)-u(\eta)|^{p-2}\big(u(\xi)-u(\eta)\big)w_-(\eta)\p^p(\eta) \, {\rm d}\nu\notag\\*[0.8ex]
      &&\quad =:  J_1 +J_2 +J_3.
    \end{eqnarray}
   We begin to estimate the term on the left-hand side. As done in our proof of~Lemma~\ref{s3_lem1}, we obtain that
   \begin{eqnarray*}
   	\int_{B_\rho}f(\xi,u)w_-(\xi)\p^p(\xi)\,{\rm d}\xi & =& \int_{B_\rho}(f(\xi,u))_+w_-(\xi)\p^p(\xi){\rm d}\xi -\int_{B_\rho}(f(\xi,u))_-w_-(\xi)\p^p(\xi){\rm d}\xi\\*[0.8ex]
   	& \geq & -\, \ell\|f\|_{L^\infty(B_R)}|B_\rho \cap \{u <\ell\}|.
   \end{eqnarray*}
   Now we focus on the right-hand side of~\eqref{s3_lem3_0}. It is convenient to split~$J_2$ as follows
    \begin{eqnarray*}
    J_2  & =& \int_{\h^n \smallsetminus B_\rho \cap \{u(\eta) < 0\}} \int_{B_\rho}|u(\xi)-u(\eta)|^{p-2}(u(\xi)-u(\eta)) w_-(\xi)\p^p(\xi) \, {\rm d}\nu\\*
         & & \ + \,\int_{\h^n \smallsetminus B_\rho \cap \{u(\eta) \geq 0\}} \          \int_{B_\rho}|u(\xi)-u(\eta)|^{p-2}(u(\xi)-u(\eta))w_-(\xi)\p^p(\xi) \,{\rm d}\nu\\*[0.8ex]
         & =: &  \,J_{2,1} + J_{2,2}.      
   \end{eqnarray*}
   Now, notice that
    \begin{eqnarray*}
     && |\eta^{-1} \circ \xi|_{\h^n}^{-Q-sp}|u(\xi)-u(\eta)|^{p-2}\big(u(\xi)-u(\eta)\big)w_-(\xi)\p^p(\xi)\\[0.0ex]
     && \qquad \qquad \qquad\leq \ \big(\ell + (u(\eta))_-\big)^{p-1}\,\ell \left(\sup_{\xi \in \textup{supp} \, \p}|\eta^{-1} \circ \xi|_{\h^n}^{-Q-sp} \right) \chi_{B_\rho \cap \{u <\ell\}}(\xi)\,,
    \end{eqnarray*}
    which yields
   $$
    J_{2,1}\ \leq\ \ell \left(\sup_{\xi \in \textup{supp} \, \p} \int_{\h^n \smallsetminus B_\rho} (\ell +(u(\eta))_-)^{p-1} |\eta^{-1} \circ \xi|_{\h^n}^{-Q-sp} \, d\eta\right)\big|B_\rho \cap \big\{u < \ell\big\}\big|\,.
    $$
   For~$J_{2,2}$, since $u\geq 0$ in~$B_\rho$, we can write
   \begin{eqnarray*}
    && |\eta^{-1} \circ \xi|_{\h^n}^{-Q-sp}|u(\xi)-u(\eta)|^{p-2}\big(u(\xi)-u(\eta)\big)w_-(\xi)\p^p(\xi)\\*[0.8ex]
    && \qquad\qquad\qquad\qquad\qquad\qquad \leq \ \ell^p \left( \sup_{\xi \in \textup{supp} \, \p} |\eta^{-1} \circ \xi|_{\h^n}^{-Q-sp}\right)\chi_{B_\rho \cap \{u <\ell\}}(\xi)\,.
    \end{eqnarray*}
  By reasoning as above for the integral~$J_3$, we finally arrive at
  $$
   J_2 + J_3 \ \leq\  c \ell \left(\sup_{\xi \in \textup{supp} \, \p}\int_{\h^n \smallsetminus B_\rho} (\ell +(u(\eta))_-)^{p-1} |\eta^{-1} \circ \xi|_{\h^n}^{-Q-sp} \,   {\rm d}\eta\right)|B_\rho \cap \{u < \ell\}|\,.
   $$

It remains to estimate the contribution~$J_1$, and for this one can proceed as seen in the Euclidean setting (see Theorem~1.4 in~\cite{DKP16}); it follows
   \begin{eqnarray*}
   J_1 & \leq& -c\int_{B_\rho}\int_{B_\rho}|w_-(\xi)\p(\xi)-w_-(\eta)\p(\eta))|^p    \, {\rm d}\nu\\*
       & &\ +\, \int_{B_\rho}\int_{B_\rho}\big(\max\big\{w_-(\xi),\, w_-(\eta)\big\}\big)^p  |\p(\xi)-\p(\eta)|^p \,{\rm d}\nu.
   \end{eqnarray*}
   Combining all the above estimates, we get
    \begin{eqnarray}\label{s3_lem3_3}
     && \int_{B_\rho}\int_{B_\rho}|w_-(\xi)\p(\xi)-w_-(\eta)\p(\eta))|^p \, {\rm d}\nu  \notag\\*
          & &\quad \leq c\int_{B_\rho}\int_{B_\rho}(\max\{w_-(\xi),w_-(\eta)\})^p |\p(\xi)-\p(\eta)|^p \, {\rm d}\nu\\*
            &&\qquad  +\, c \ell |B_\rho \cap \{u < \ell\}|\Big(\sup_{\xi \in \textup{supp} \, \p} \int_{\h^n \smallsetminus B_\rho} (\ell +(u(\eta))_-)^{p-1} |\eta^{-1} \circ \xi|_{\h^n}^{-Q-sp} \,  {\rm d}\eta\notag\\*
            && \qquad\qquad\qquad\qquad\qquad\ \ \ \ + \|f\|_{L^\infty(B_R)}\Big) \notag.
    \end{eqnarray}
   At this point an iteration argument is needed. To this aim, define
   $$
    \ell \equiv \ell_j := \delta k + 2^{-j-1}\delta k, \quad
   \rho \equiv \rho_j := {4r} + {2^{1-j}r}, \quad \text{and} \quad  \tilde{\rho}_j := \frac{\rho_{j+1}+\rho_j}{2}, \quad \forall j=0,1,...
   $$
   Note that both $4r<\rho_j, \tilde{\rho}_j <6r$  and
    $$
    \ell_j-\ell_{j+1} \ = \ 2^{-j-2}\delta k \ \geq\  2^{-j-3}\ell_j.
   $$
Moreover, in view of~\eqref{s3_lem3_2}, we have 
   $$
   \ell_0 \ =\ \frac{3}{2}\delta k
   \  \leq \ 2 \delta k - \frac{1}{2}\left(\frac{r}{R}\right)^\frac{sp}{p-1}\!\T(u_-;\xi_0,R)\,,
   $$
   which yields
\begin{equation}\label{12.0r}
   \big\{u < \ell_0\big\} \subset \left\{u < 2 \delta k - \frac{1}{2}\left(\frac{r}{R}\right)^\frac{sp}{p-1}\!\T(u_-;\xi_0,R) \right\}.
\end{equation}
   We are now in the position to apply the estimate in~Lemma~\ref{s3_lem1}; we arrive at
   \begin{equation}\label{s3_lem3_4}
   \frac{ | B_{6r} \cap \{u < \ell_0\} |}{|B_{6r}|} 
   \ \leq \ \frac{\bar{c}}{\sigma \log\frac{1}{2 \delta}}.
    \end{equation}
Now,
   $$
    w_- \equiv w_j = (\ell_j-u)_+
    \ \geq\ (\ell_j-\ell_{j+1}) \chi_{B_\rho \cap \{u <\ell_{j+1}\}}
    \ \geq\ 2^{-j-3}\ell_j \chi_{B_\rho \cap \{u <\ell_{j+1}\}}, \quad \forall j=0,1,...
    $$
   Denote by $B_j := B_{\rho_j}(\xi_0)$ and let $\p_j \in C^\infty_0(B_{\tilde{\rho}_j})$ be such that
    $$
    0 \leq \p_j \leq 1 \qquad \p_j \equiv 1 \mbox{ on } B_{j+1}, \qquad |\nabla_{\h^n} \p_j| \leq 2^{j+3}/r.
    $$
We have
    \begin{eqnarray}\label{s3_lem3_5}
    && (\ell_j-\ell_{j+1})^p \left(\frac{|B_{j+1} \cap \{u < \ell_{j+1}\}|}{|B_{j+1}|}\right)^\frac{p}{p^*}\nonumber\\*
    &&\quad 
    \leq \left( \, \dashint_{B_{j+1}}  w_j^{p^*}\p_j^{p^*} \, {\rm d}\xi \, \right)^ \frac{p}{p^*} \notag\\*[0.8ex]
    &&\quad \leq c\left( \, \dashint_{B_j}  w_j^{p^*}\p_j^{p^*} \, {\rm d}\xi \, \right)^ \frac{p}{p^*} \notag\\*[0.8ex]
    &&\quad \leq  cr^{sp} \dashint_{B_j}\int_{B_j} |w_j(\xi)\p_j(\xi)-w_j(\eta)\p_j(\eta)|^p \, {\rm d}\nu,
    \end{eqnarray}
   where in the last inequality we have used the Sobolev embedding with $p^* = Qp/(Q-sp)$.

    Let us estimate~\eqref{s3_lem3_5} with the aid of~\eqref{s3_lem3_3}. Firstly, by the particular choice of~$\p$, we have 
    \begin{eqnarray*}
    &&c\int_{B_j}\int_{B_j}\big(\max\big\{w_j(\xi),\,w_j(\eta)\big\}\big)^p |\p_j(\xi)-\p_j(\eta)|^p \, {\rm d}\nu\\*
    && \qquad\qquad\qquad\qquad\qquad \leq c 2^{jp}\ell_j^p r^{-p}\int_{B_j}\int_{B_j \cap \{u < \ell_j\}}|\eta^{-1} \circ \xi|_{\h^n}^{-Q-sp+p} \, {\rm d}\xi {\rm d}\eta\\*[0.8ex]
     && \qquad\qquad\qquad\qquad\qquad\leq c 2^{jp}\ell_j^p r^{-sp}|B_j \cap \{u < \ell_j\}|\,,
   \end{eqnarray*}
    where, proceeding as in the proof of the Logarithmic Lemma~1.4 in~\cite{MPPP21}, we have that
     $$
	\int_{B_j} |\eta^{-1} \circ \xi|_{\h^n}^{p-Q-sp} \,{\rm d}\eta \leq c r^{p-sp}.
    $$
    
    Now, notice that, for any $\eta \in \h^n \smallsetminus B_j$ and any $\xi \in \textup{supp} \, \p_j \subset B_{\tilde{\rho}_j}$, it holds
    $$
    \frac{|\eta^{-1} \circ \xi_0|_{\h^n}}{|\eta^{-1} \circ \xi|_{\h^n}}
    \ \leq\ \frac{|\eta^{-1} \circ \xi|_{\h^n}+|\xi^{-1} \circ \xi_0|_{\h^n}}{|\eta^{-1} \circ \xi|_{\h^n}} 
    \ \leq\ c2^{j}.
    $$
    Thus, 
    $$
    \sup_{\xi \in \, \textup{supp} \, \p_j}|\eta^{-1} \circ \xi|_{\h^n}^{-Q-sp}
    \ \leq\ c 2^{j(Q+sp)}|\eta^{-1} \circ \xi_0|_{\h^n}^{-Q-sp}, \quad \forall \eta \in \h^n \smallsetminus B_j\,,
    $$
    which yields
    \begin{eqnarray}\label{13.0}
     && \sup_{\xi \in \, \textup{supp} \, \p_j} \int_{\h^n \smallsetminus B_j} (\ell_j +(u(\eta))_-)^{p-1} |\eta^{-1} \circ \xi|_{\h^n}^{-Q-sp} \, {\rm d}\eta \notag\\*
     & &\qquad\qquad\qquad\leq c 2^{j(Q+sp)}\ell_j^{p-1}r^{-sp} +c2^{j(Q+sp)}r^{-sp} \left(\frac{r}{R}\right)^{sp} [\T(u_-;\xi_0,R)]^{p-1}\notag\\*[0.8ex]
     & &\qquad\qquad\qquad\leq c 2^{j(Q+sp)}r^{-sp}\ell_j^{p-1},
     \end{eqnarray}
   where we have also used the fact that $u \geq 0 $ in $B_R$, the estimate in~\eqref{s3_lem3_2}, and the fact that~$\delta k < \ell_j$.
\vspace{2mm}

   From~\eqref{s3_lem3_3}, \eqref{s3_lem3_5}, and~\eqref{13.0}, we arrive at
   \begin{eqnarray*}
   &&\left(\frac{|B_{j+1} \cap \{u < \ell_{j+1}\}|}{|B_{j+1}|}\right)^\frac{p}{p^*} \\*
   &&\quad\leq\  c2^{j(Q+p+sp)}\frac{\max \{\ell_j^p,\ell_j\}}{ (\ell_j-\ell_{j+1})^p}(1+r^{sp}\|f\|_{L^\infty(B_R)}) \frac{|B_j \cap \{u < \ell_j\}|}{|B_j|}\,.
   \end{eqnarray*}
We are finally in the position to apply the classic iteration Lemma~\ref{giusti}. We set
    \begin{equation*}
    A_j := \frac{|B_j \cap \{u < \ell_j\}|}{|B_j|}
     \end{equation*}
   the previous estimate can be rewritten as follows
    \begin{equation*}
    A_{j+1}^\frac{p}{p^*}\, \leq \, c\frac{2^{j(Q+p+sp)}\max \big\{\ell_j^p,\ell_j\big\}}{\big(\ell_j-\ell_{j+1})^p}(1+r^{sp}\|f\|_{L^\infty(B_R)}\big)A_j\,.
    \end{equation*}
Also, since
   \begin{equation*}
    \frac{\max \{\ell_j^p,\ell_j\}}{(\ell_j-\ell_{j+1})^p} \leq c_p 2^{jp}\max\{1,\frac{3}{2}(\delta k)^{1-p}\} \leq c 2^{jp},
   \end{equation*}
 it follows 
    \begin{equation*}
     A_{j+1} \leq c_1 2^{j(\frac{Qp^*}{p}+2p^*+sp^*)} A_j^{1+\beta}, \qquad \text{with} \ \beta = \frac{sp}{Q-sp},
     \end{equation*}
   and $c_1^{\frac{p}{p^*}}=c\big(1+r^{sp}\|f\|_{L^\infty(B_R)}\big)$.
  Choosing~$\delta>0$ as follows,
    \begin{equation*}
\delta:= \frac{1}{4} \exp \left\{-\frac{\bar{c}c_1^\frac{Q-sp}{sp}2^{(\frac{Q}{p}+s+2)\frac{Q(Q-sp)}{ps^2}}}{\sigma}\right\} < \frac{1}{4}\,,
   \end{equation*}
    and using the estimate in~\eqref{s3_lem3_4}, we arrive at
    \begin{align*}
    A_0 =\frac{|B_{6r}\cap \{u < \ell_0\}|}{|B_{6r}|} 
    \ \leq\ c_1^{-\frac{Q-sp}{sp}}2^{-(\frac{Q}{p}+s+2)\frac{Q(Q-sp)}{ps^2}},
    \end{align*}
which gives
   $$
   \lim_{j \rightarrow \infty}A_j =0,
   $$
 so that $\inf_{B_{4r}}u \geq \delta k$, and hence~\eqref{s3_lem3_1} plainly follows.
 
 \vspace{2mm}
 We consider now the case when~$sp=Q$.  For this, we choose~$ 0 < \varepsilon<s$ and, calling~$s_\varepsilon :=s-\varepsilon$, we have that~$s_\varepsilon p < Q$. Then, for~$p <q< p^*_\varepsilon :=  \frac{Qp}{Q-s_\varepsilon p}$,  we apply the Sobolev inequality in~Theorem~\ref{sobolev}; it follows
 	$$
 	\left(\, \dashint_{B_j} w_j^q \, \p_j^q {\rm d}\xi \right)^\frac{p}{q} \leq |B_j|^{\frac{(p^*_\varepsilon -q)p}{qp^*_\varepsilon} - \frac{p}{q}}  \left(\, \int_{B_j} w_j^{p^*_\varepsilon}\, \p_j^{p^*_\varepsilon} {\rm d}\xi \right)^\frac{p}{p^*_\varepsilon} \leq  c \frac{r^{s_\varepsilon p}}{r_j^Q}  \left(\, \int_{B_j} w_j^{p^*_\varepsilon}\, \p_j^{p^*_\varepsilon} {\rm d}\xi \right)^\frac{p}{p^*_\varepsilon}.
 	$$ 
 	Thus, by the inequality above we get
 	\begin{eqnarray}\label{s3_lem3_6}
 		(\ell_j-\ell_{j+1})^p \left(\frac{|B_{j+1} \cap \{u < \ell_{j+1}\}|}{|B_{j+1}|}\right)^\frac{p}{q} 
 		& \leq & c \left( \, \dashint_{B_j}w_j^q \p_j^q \, {\rm d}\xi\right)^\frac{p}{q}\notag\\*
 		& \leq & c \frac{r^{s_\varepsilon p}}{r_j^Q}  \left(\, \int_{B_j}  w_j^{p^*_\varepsilon} \, \p_j^{p^*_\varepsilon} \, {\rm d}\xi \, \right)^ \frac{p}{p^*_\varepsilon} \notag\\*
 		& \leq & cr^{s_\varepsilon p} \, \dashint_{B_j}\int_{B_j} \frac{|w_j(\xi)\p_j(\xi)-w_j(\eta)\p_j(\eta)|^p}{|\eta^{-1}\circ \xi|_{\h^n}^{Q+s_\varepsilon p}} \, {\rm d}\xi {\rm d}\eta\notag\\*
 		& \leq & cr^{s_\varepsilon p} \left( \,  \dashint_{B_j} w_j^{p} \, \p_j^{p} \, {\rm d}\xi \right.\\*
 		&&  + \, \left. \dashint_{B_j}\int_{B_j} \frac{|w_j(\xi)\p_j(\xi)-w_j(\eta)\p_j(\eta)|^p}{|\eta^{-1}\circ \xi|_{\h^n}^{Q+sp}} \, {\rm d}\xi {\rm d}\eta \right) \, ,\notag
 	\end{eqnarray}
  where  in the last inequality we have split the seminorm~$[w_j\p_j]_{W^{s_\varepsilon,p}}^p$ in the following way 
   \begin{eqnarray*}
    &&\int_{B_j}\int_{B_j} \frac{|w_j(\xi)\p_j(\xi)-w_j(\eta)\p_j(\eta)|^p}{|\eta^{-1}\circ \xi|_{\h^n}^{Q+s_\varepsilon p}} \, {\rm d}\xi {\rm d}\eta\\* &&\qquad\qquad = \, \int_{B_j}\int_{B_j \cap \{|\eta^{-1} \circ \xi|_{\h} \geq 1\}} \frac{|w_j(\xi)\p_j(\xi)-w_j(\eta)\p_j(\eta)|^p}{|\eta^{-1}\circ \xi|_{\h^n}^{Q+s_\varepsilon p}} \, {\rm d}\xi {\rm d}\eta\\*
    &&\qquad\qquad\quad + \int_{B_j}\int_{B_j \cap \{|\eta^{-1}\circ \xi|_{\h^n} <1\}} \frac{|w_j(\xi)\p_j(\xi)-w_j(\eta)\p_j(\eta)|^p}{|\eta^{-1}\circ \xi|_{\h^n}^{Q+s_\varepsilon p}} \, {\rm d}\xi {\rm d}\eta,
 	\end{eqnarray*}
 	and we have estimated the two integrals above as done in~\cite[Proposition~2.8]{MPPP21}.
 	
The first term on the right-hand side of~\eqref{s3_lem3_6} can be treated as follows
 	$$
 	c \, r^{s_\varepsilon p}\, \dashint_{B_j}w_j^p \, {\rm d}\xi \, \leq \,  c \, r^{s_\varepsilon p} \, \ell_j^p \,  \frac{|B_j \cap \{u < \ell_j\}|}{|B_j|}.
 	$$
 	On the other hand, using the same techniques applied in the subcritical case when~$sp <Q$, we have that the second term on the right-hand side in~\eqref{s3_lem3_6} becomes
 	\begin{eqnarray*}
 		&&	cr^{s_\varepsilon p} \,\dashint_{B_j}\int_{B_j}|w_j(\xi)\p_j(\xi)-w_j(\eta)\p_j(\eta)|^p |\eta^{-1}\circ \xi|_{\h^n}^{-Q-sp} \, {\rm d}\xi {\rm d}\eta\notag\\*
 		&&\qquad\quad \leq c \, r^{-\varepsilon p} \, 2^{j(Q+p+sp)} \max\{\ell_j^p,\ell_j\} (1+r^{Q}\, \|f\|_{L^\infty(B_R)}) \frac{|B_j \cap \{u < \ell_j\}|}{|B_j|} \,,
 	\end{eqnarray*}
 	Setting, as before,
 	$$
 	A_j := \frac{|B_j \cap \{u < \ell_j\}|}{|B_j|},
 	$$
 	we get
 	$$
 	A_{j+1} \leq c_1 2^{j \left(\frac{Qq}{p}+2q +sq\right)}A_j^{1+\beta},
 	$$
 	with~$c_1^\frac{p}{q}  := c \, r^{-\varepsilon p}\, ( 1+r^{Q}+r^{Q}\|f\|_{L^\infty(B_R)})$ and~$\beta := \frac{q-p}{p}$.
 	Choosing now
 	$$
 	0 <\delta := \frac{1}{4}\exp \left\{- \frac{\bar{c}c_1^\frac{p}{q-p}2^{\left(\frac{Q}{p}+s+2\right)\frac{qp^2}{(q-p)^2}}}{\sigma}\right\} < \frac{1}{4},
 	$$
 	from~\eqref{s3_lem3_4} we get that
 	\begin{align*}
 		A_0 =\frac{|B_{6r}\cap \{u < \ell_0\}|}{|B_{6r}|} \leq c_1^{-\frac{p}{q-p}}2^{-(\frac{Qq}{p}+2q+sq)\frac{p^2}{(q-p)^2}}.
 	\end{align*}
 	Then, Lemma~\ref{giusti}, with 
 	\begin{equation*}
 		c_0 := c_1 \qquad \mbox{and} \qquad b:= 2^{\frac{Qq}{p}+2q+sq},
 	\end{equation*}
 	yields
 	$$
 	\lim_{j \rightarrow \infty}A_j =0;
 	$$
 	which implies~$\inf_{B_{4r}}u \geq \delta k$ and hence~\eqref{s3_lem3_1} when~$sp=Q$.
	\vspace{2mm}
	  
	  The case when $sp>Q$ can be deduced as in the latter, without relevant modifications, choosing the parameter~$\varepsilon > (s-{Q}/{p})$. With such a choice in hand,   we can use the Sobolev embedding for~$W^{s_\varepsilon,p}$ in Theorem~\ref{sobolev} and the desired result plainly follows as in the previous case when~$sp=Q$.
   \end{proof}

%

\vspace{2mm}
   \subsection{Proof of Theorem~\ref{thm_harnack}}
In order to derive the Harnack inequalities with tail, we firstly need the estimate~\eqref{s3_lem4_1} below, which is a straightforward consequence of the refined positivity expansion proven in the previous section, together with the classical Krylov-Safonov covering lemma (whose proof can be found for instance in~\cite[Lemma~7.2]{KS01}), which can be adjusted to our framework thanks to the role of the nonlocal tail, as shown in the Euclidean framework in the proof of~\cite[Lemma 4.1]{DKP14}.
     \begin{lemma}\label{s3_lem4}
     Let~$s \in (0,1)$,~$p \in (1,\infty)$ and let $u \in W^{s,p}(\h^n)$ be a weak supersolution to~\eqref{problema} such that  $u \geq 0 $ in $B_R \equiv B_R(\xi_0) \subset \Omega$. Then, for any $ B_{6r} \equiv B_{6r}(\xi_0) \subset B_R$, there exist constants~$\e \in  (0,1)$ and~$\textbf{c}=\textbf{c}(n,s,p,\Lambda) \geq 1$ such that
     \begin{equation}\label{s3_lem4_1}
     \left( \, \dashint_{B_{r}} u^\e \,{\rm d}\xi \, \right)^\frac{1}{\e} \leq \textbf{c} \inf_{B_{r}}u + \textbf{c}\left(\frac{r}{R}\right)^\frac{sp}{p-1}\T(u_-;\xi_0,R),
     \end{equation}
      where~$\T(\cdot)$ is defined in~\eqref{tail}.
     \end{lemma}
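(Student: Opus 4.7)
The plan is to combine the positivity expansion in Lemma~\ref{s3_lem3} with a Krylov--Safonov covering argument to obtain polynomial decay of the super-level sets of~$u$, which then integrates in layer-cake form into the $L^\varepsilon$ bound. Set
$$
M := \inf_{B_{r}} u + \left(\frac{r}{R}\right)^{\!sp/(p-1)}\!\T(u_-;\xi_0,R),
$$
which we may assume strictly positive (otherwise~\eqref{s3_lem4_1} is trivial). The target becomes $(\dashint_{B_{r}} u^\varepsilon\,{\rm d}\xi)^{1/\varepsilon} \leq \textbf{c}\, M$ for some universal $\varepsilon \in (0,1)$.

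First I would establish the key distributional bound: there exist $\varepsilon \in (0,1)$ and $\textbf{c} \geq 1$, depending only on $n,s,p,\Lambda$, such that
$$
\big|B_{r} \cap \{u > t M\}\big| \, \leq \, \textbf{c}\, t^{-\varepsilon}\, |B_{r}|, \qquad \forall\, t \geq 1.
$$
This is obtained via a Calder\'on--Zygmund-type dyadic iteration adapted to the Heisenberg metric structure, in the spirit of~\cite[Lemma~7.2]{KS01} and the Euclidean argument in~\cite[proof of Lemma~4.1]{DKP14}. If for some $t \geq 1$ the super-level set $\{u > tM\}\cap B_{r}$ had too large a density, the covering lemma would produce a ball $B_{6\rho}(\bar\xi) \subset B_R$ on which $|B_{6\rho}(\bar\xi) \cap \{u \geq tM\}| \geq \sigma_0 |B_{6\rho}(\bar\xi)|$ for some universal $\sigma_0 \in (0,1]$. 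Invoking Lemma~\ref{s3_lem3} re-centered at $\bar\xi$ (using that $u \geq 0$ on a ball of comparable size $R'\sim R$ around $\bar\xi$, and that the re-centered tail $\T(u_-;\bar\xi, R')$ is controlled by $\T(u_-;\xi_0,R)$ up to a universal multiplicative constant coming from the doubling property of $|\cdot|_{\h^n}$-balls) delivers a universal $\delta = \delta(\sigma_0) \in (0,1/4)$ with
$$
\inf_{B_{4\rho}(\bar\xi)} u \, \geq \, \delta\, tM - c\, M.
$$
Choosing $t$ of the form $\delta^{-j}$ with $j$ large enough, the right-hand side exceeds $M$, contradicting the very definition of $M$. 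Iterating this dichotomy yields the polynomial decay with $\varepsilon := \log(1-\nu)^{-1}/\log(\delta^{-1})$ for some $\nu \in (0,1)$.

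Once the distributional decay is in hand, the layer-cake formula closes the proof:
$$
\int_{B_{r}} u^\varepsilon\,{\rm d}\xi \, = \, \varepsilon \int_0^\infty \tau^{\varepsilon-1}\, |B_{r} \cap \{u > \tau\}|\,{\rm d}\tau \, \leq \, M^\varepsilon |B_{r}| + c\, M^\varepsilon |B_{r}| \int_1^\infty t^{\varepsilon-1}\, t^{-2\varepsilon}\,{\rm d}t,
$$
which converges as soon as $\varepsilon$ is taken strictly smaller than the decay exponent of Step~2; extracting $\varepsilon$-th roots gives~\eqref{s3_lem4_1}.

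The main obstacle is the interplay between the covering argument and the nonlocal tail: Lemma~\ref{s3_lem3} is anchored at the fixed point $\xi_0$ where nonnegativity of $u$ holds, whereas Krylov--Safonov supplies balls at varying centers $\bar\xi \in B_{r}$. Carefully re-anchoring the positivity expansion at these $\bar\xi$ and dominating the locally centered tails $\T(u_-;\bar\xi, R')$ by the globally centered one modulo a universal multiplicative constant is exactly what keeps the tail correction uniformly comparable to~$M$ throughout the iteration, a step that is nontrivial in a non-Euclidean geometry but ultimately hinges on the homogeneity and doubling of the Kor\'anyi--Folland norm.
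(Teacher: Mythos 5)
Your proposal follows essentially the same route the paper intends: the paper proves Lemma~\ref{s3_lem4} by combining the expansion of positivity of Lemma~\ref{s3_lem3} with the Krylov--Safonov covering lemma of~\cite[Lemma~7.2]{KS01}, exactly as in~\cite[Lemma~4.1]{DKP14}, and you correctly identify the one genuinely delicate point, namely re-anchoring the positivity expansion at the varying centers $\bar\xi$ and dominating $\T(u_-;\bar\xi,R')$ by $\T(u_-;\xi_0,R)$ via the quasi-triangle inequality for the homogeneous norm. The only loose spot is the contradiction step: a single ball with $\inf_{B_{4\rho}(\bar\xi)}u>M$ does not by itself contradict $M\geq\inf_{B_{r}}u$; the contradiction actually comes from the alternative in the covering lemma in which the enlargement of the superlevel set fills all of $B_{r}$, forcing $\inf_{B_{r}}u\geq\delta^{m}tM$ after finitely many steps --- but this is precisely the ``dichotomy'' you invoke, so the argument is the intended one.
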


In the next lemma, we prove that the tail of the positive part of the weak solutions to~\eqref{problema} can be controlled in a precise way.
    \begin{lemma}\label{s3_lem5}
    Let $s \in (0,1)$, $p \in (1,\infty)$ and $u \in W^{s,p}(\h^n)$ be a weak solution to \eqref{problema} such that $u \geq 0$ in $B_R(\xi_0) \subset \Omega$.  Then, for any $0 <r<R$,
    \begin{equation}\label{eq_tail}
    \T(u_+;\xi_0,r) \ \leq \ \textbf{c} \sup_{B_{r}} u + \textbf{c} \left(\frac{r}{R}\right)^\frac{sp}{p-1}\T(u_-;\xi_0,R)
  +\textbf{c}r^\frac{sp}{p-1}\|f\|^\frac{1}{p-1}_{L^\infty(B_R)},
    \end{equation}
	    where $\T(\cdot)$ is defined in~\eqref{tail} and $\textbf{c}=\textbf{c}(n,s,p,\Lambda)$.
    \end{lemma}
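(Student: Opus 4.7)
The plan is to test the weak formulation of~\eqref{problema} with $\psi=\phi^p$, where $\phi\in C^\infty_0(B_{3r/4})$ is a standard cutoff with $\phi\equiv 1$ on $B_{r/2}$, $0\le\phi\le 1$, and $|\nabla_{\h^n}\phi|\le c/r$. Writing $M:=\sup_{B_r}u\ge 0$ and using that $\phi$ is supported in $B_r$, the identity splits as
$$ LL+2\,NL=F, $$
where, with $g(\xi,\eta):=|u(\xi)-u(\eta)|^{p-2}(u(\xi)-u(\eta))$,
$$ LL:=\int_{B_r}\!\!\int_{B_r}\!\!g(\xi,\eta)\bigl(\phi^p(\xi)-\phi^p(\eta)\bigr)\,{\rm d}\nu,\quad NL:=\int_{B_r}\!\!\int_{\h^n\setminus B_r}\!\!g(\xi,\eta)\phi^p(\xi)\,{\rm d}\nu,\quad F:=\int_{B_r}\!f(\xi,u)\phi^p\,{\rm d}\xi. $$

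The core of the argument is to extract the $\T(u_+;\xi_0,r)$ contribution as a large negative term in $NL$. The plan is to split $\h^n\setminus B_r=A_1\cup A_2\cup A_3$ with $A_1:=\{u\ge 2M\}$, $A_2:=\{0\le u<2M\}$, $A_3:=\{u<0\}$. On $A_1$, since $u(\xi)\le M$ on ${\rm supp}\,\phi$, one has $u(\xi)-u(\eta)\le -u_+(\eta)/2<0$, hence $g(\xi,\eta)\le -2^{1-p}u_+(\eta)^{p-1}$; together with the comparison $|\eta^{-1}\circ\xi|_{\h^n}\le c|\eta^{-1}\circ\xi_0|_{\h^n}$ for $\xi\in B_{3r/4},\eta\in\h^n\setminus B_r$ (as in~\eqref{9.1}), this will yield
$$ NL|_{A_1}\le -c\,r^{Q}\!\int_{A_1\cap(\h^n\setminus B_r)}\!u_+(\eta)^{p-1}|\eta^{-1}\circ\xi_0|_{\h^n}^{-Q-sp}\,{\rm d}\eta. $$
On $A_2$, the pointwise bound $|g|\le (3M)^{p-1}$ combined with $\int_{\h^n\setminus B_r}|\eta^{-1}\circ\xi_0|_{\h^n}^{-Q-sp}{\rm d}\eta\le c r^{-sp}$ gives $|NL|_{A_2}|\le c M^{p-1}r^{Q-sp}$. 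On $A_3$, which is contained in $\h^n\setminus B_R$ because $u\ge 0$ on $B_R$, we have $|g|\le c(M^{p-1}+u_-(\eta)^{p-1})$, so $|NL|_{A_3}|\le c M^{p-1}r^{Q-sp}+c\,r^{Q-sp}(r/R)^{sp}\T(u_-;\xi_0,R)^{p-1}$.

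For the remaining pieces, $|F|\le c\|f\|_{L^\infty(B_R)}r^{Q}$ is immediate, and the delicate point is to prove $|LL|\le cM^{p-1}r^{Q-sp}$ (up to tail/forcing corrections of the same form as the ones on the right-hand side of~\eqref{eq_tail}). A naive bound via $|g|\le M^{p-1}$ and pointwise Lipschitz interpolation of $\phi$ against the kernel $|\eta^{-1}\circ\xi|_{\h^n}^{-Q-sp}$ diverges when $sp\ge 1$, so I would combine the H\"older inequality with the Caccioppoli-type estimate of Theorem~\ref{s3_lem2} (applied with $w=(u+d)^{(p-q)/p}$, $q$ close to~$p$, and $d$ aligned with the tail of $u_-$) to control the relevant fractional seminorm of $u$ on $B_r$; this is the Heisenberg counterpart of the algebraic manipulation used in~\cite{DKP14}. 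Once $|LL|$ is bounded, rearranging the identity as $-NL|_{A_1}=(LL-F)/2+NL|_{A_2}+NL|_{A_3}$, adding the trivial contribution $\int_{A_2\cap(\h^n\setminus B_r)}u_+^{p-1}|\eta^{-1}\circ\xi_0|_{\h^n}^{-Q-sp}{\rm d}\eta\le cM^{p-1}r^{-sp}$ to recover the full tail of $u_+$, dividing by $r^Q$, multiplying by $r^{sp}$ and taking the $(p-1)$-th root produces exactly~\eqref{eq_tail}. The main obstacle is precisely the quantitative upper bound on $|LL|$ in the degenerate/singular regime and for $sp\ge Q$; the Caccioppoli estimate proved in~\cite{MPPP21} and recalled in Theorem~\ref{s3_lem2} is the right tool to overcome it.
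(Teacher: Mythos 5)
Your treatment of the nonlocal term is sound and close in spirit to what the paper does (following Lemma~4.2 of~\cite{DKP14}): the splitting of $\h^n\smallsetminus B_r$ according to the size of $u(\eta)$, the extraction of the negative $\T(u_+;\xi_0,r)$ contribution from the set where $u\geq 2M$, and the recovery of the full positive tail at the cost of $cM^{p-1}r^{-sp}$ are all correct. The genuine gap is in the local term $LL$, and it stems from your choice of test function. With $\psi=\phi^p$ the integrand of $LL$ carries only one factor of $|\phi^p(\xi)-\phi^p(\eta)|\lesssim r^{-1}|\eta^{-1}\circ\xi|_{\h^n}$, so the kernel exponent becomes $1-Q-sp$ and, as you note, the integral diverges for $sp\geq 1$. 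Your proposed repair does not work as stated: Theorem~\ref{s3_lem2} controls the seminorm of $w\phi$ with $w=(u+d)^{(p-q)/p}$ and $(p-q)/p<1$, i.e.\ a \emph{concave power} of $u+d$, and there is no direct passage from that to the bound $\big(\int\!\!\int|u(\xi)-u(\eta)|^p\,{\rm d}\nu\big)^{(p-1)/p}$ that H\"older would require; moreover its right-hand side reintroduces tail and $\|f\|$ terms with $d^{1-p}$ and $d^{1-q}$ prefactors that you would still have to close. (A Caccioppoli inequality for the truncation $(M-u)_+\phi$, in the style of Theorem~1.4 of~\cite{DKP16}, could be made to work, but that is not the tool you invoked.)

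The paper avoids the issue entirely by testing with $\psi:=(u-2k)\phi^p$, $k=\sup_{B_r}u$, rather than $\phi^p$. Writing $(u(\xi)-2k)\phi^p(\xi)-(u(\eta)-2k)\phi^p(\eta)=(u(\xi)-u(\eta))\phi^p(\xi)+(u(\eta)-2k)(\phi^p(\xi)-\phi^p(\eta))$, the first piece contributes the nonnegative term $|u(\xi)-u(\eta)|^p\phi^p(\xi)$, and after absorbing the cross term by Young's inequality (using $|u(\eta)-2k|\leq 2k$ on $B_r$) one is left with
$$
H_1\ \geq\ -ck^pr^{-p}\int_{B_r}\int_{B_r}|\eta^{-1}\circ\xi|_{\h^n}^{p-Q-sp}\,{\rm d}\xi\,{\rm d}\eta\ \geq\ -ck^pr^{-sp}|B_r|,
$$
which converges for every $s\in(0,1)$ and $p>1$ because the gained exponent is $p(1-s)>0$ rather than $1-sp$. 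The nonlocal terms $H_2,H_3$ are then handled exactly as in your decomposition (with the thresholds taken relative to $2k$). You should therefore replace your test function by $(u-2k)\phi^p$; the rest of your argument then goes through.
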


    \begin{proof}
    Set $k := \sup_{B_{r}}u$ and choose a cut-off function $\p \in C^\infty_0(B_{r})$ such that $0 \leq \p \leq 1$, $\p \equiv 1 $ on $B_{r/2}$ and $|\nabla_{\h^n} \p | \leq 8/r$. 
Take now the test function $\psi := (u-2k)\p^p$. We have 
   \begin{eqnarray}\label{s3_lem5_1}
  && \int_{B_{r}}f(\xi,u) (u(\xi)-2k)\p^p(\xi) {\rm d} \xi \notag\\*
   & &\qquad =   \int_{B_{r}}\,\int_{B_{r}}|u(\xi)-u(\eta)|^{p-2}\big(u(\xi)-u(\eta)\big)\big((u(\xi)-2k)\p^p(\xi)-(u(\eta)-2k)\p^p(\eta)\big) \, {\rm d}\nu \notag\\*
    & & \qquad \quad + \int_{\h^n \smallsetminus B_{r}}\,\int_{B_{r}}|u(\xi)-u(\eta)|^{p-2}\big(u(\xi)-u(\eta)\big)\big(u(\xi)-2k)\p^p(\xi) \, {\rm d}\nu\notag\\*
    & &\qquad \quad - \int_{B_{r}}\,\int_{\h^n \smallsetminus B_{r}}|u(\xi)-u(\eta)|^{p-2}\big(u(\xi)-u(\eta)\big)\big(u(\eta)-2k)\p^p(\eta\big) \, {\rm d}\nu\notag\\*[0.8ex]
    & & \qquad =:   H_1 + H_2 +H_3.
   \end{eqnarray}
The last two integral in the identity above can be estimated as in the proof of Lemma~4.2 in~\cite{DKP14}; we have
   \begin{eqnarray}\label{s3_lem5_4}
    H_2+H_3 &\ge&
 \ ck|B_{r}|r^{-sp}[\T(u_+;\xi_0,r)]^{p-1} -ck^pr^{-sp}|B_{r}|\\*
 &&-ck|B_{r}|R^{-sp}[\T(u_-;\xi_0,R)]^{p-1}.\notag
   \end{eqnarray}
For what concerns the contribution~$H_1$ in~\eqref{s3_lem5_1}, we have
   \begin{eqnarray}\label{s3_lem5_5}
    H_1 & \geq &-ck^p r^{-p}\int_{B_{r}}\int_{B_{r}}|\eta^{-1}\circ \xi|_{\h^n}^{p-Q-sp} \, {\rm d}\xi {\rm d}\eta \notag\\*[0.8ex]
    & \geq & -ck^pr^{-sp}|B_{r}|,
   \end{eqnarray}
   where  we argued as in the proof of Lemma~1.4 in~\cite{MPPP21}.
   
   The contribution given by the datum~$f$ can be easily estimated as follows,
\begin{equation}\label{17.0}
   \int_{B_{r}}f(\xi,u)(u(\xi)-2k)\p^p(\xi)\, {\rm d} \xi 
   \ \leq\  k|B_{r}|\|f\|_{L^\infty(B_R)}.
\end{equation}

Finally, combining~\eqref{s3_lem5_1} with~\eqref{s3_lem5_4}, \eqref{s3_lem5_5}, and~\eqref{17.0}, we obtain 
   \begin{equation*}
   \T(u_+;\xi_0,r) 
   \ \leq\ ck + c \left(\frac{r}{R}\right)^\frac{sp}{p-1}\T(u_-;\xi_0;R) +cr^\frac{sp}{p-1}\|f\|^\frac{1}{p-1}_{L^\infty(B_R)}\,,
   \end{equation*}
   which gives the desired inequality by recalling the definition of~$k$\,.
   \end{proof}

Armed with the tail estimate in Lemma~\ref{s3_lem5}, and the interpolative inequality given by Theorem~\ref{thm_bdd}, we are ready to complete the proof of the
 Harnack inequality with tail in~\eqref{eq_harnack}. The strategy does generalize that successfully applied in~\cite{DKP14} in the analysis of the homogeneous case in the Euclidean framework.
  \begin{proof}[Proof of  Theorem~{\ref{thm_harnack}}]
Combining the supremum estimate~\eqref{eq_bdd} in Theorem~\ref{thm_bdd} with the tail estimate~\eqref{eq_tail}, we get
  \begin{eqnarray*}
  \sup_{B_{\rho/2}}u & \leq & c\delta^{-\gamma}\left( \, \dashint_{B_{\rho}} u_+^p \, {\rm d}\xi\right)^\frac{1}{p} +c\delta\sup_{B_{\rho}}u\\*
                              & & +\, c\delta\left(\frac{\rho}{R}\right)^\frac{sp}{p-1}\!\T(u_-;\xi_0,R) \,+\, c\delta\rho^\frac{sp}{p-1}\|f\|^\frac{1}{p-1}_{L^\infty(B_R)}.
  \end{eqnarray*}
We now set $\rho:=(\sigma-\sigma')r$, with ${1}/{2} \leq \sigma'<\sigma \leq 1$, so that
  \begin{eqnarray*}
  \sup_{B_{\sigma'r}}u & \leq &
  c\frac{\delta^{-\gamma}}{(\sigma-\sigma')^\frac{Q}{p}}\Big(\sup_{B_{\sigma r}}u\Big)^\frac{p-\e}{p}\left(\, \dashint_{B_{\sigma r}}u^\e \,{\rm d}\xi\right)^\frac{1}{p} +c\delta \sup_{B_{\sigma r}}u\\*  
  & &  +\,c\delta\left(\frac{r}{R}\right)^\frac{sp}{p-1}\T(u_-;\xi_0,R) + c\delta r^\frac{sp}{p-1}\|f\|^\frac{1}{p-1}_{L^\infty(B_R)}\,,
  \end{eqnarray*} 
  where~$\e \in (0,1)$ is the one given by Lemma~\ref{s3_lem4}. We choose the interpolation parameter~$\delta=(4c)^{-1}$, and we obtain
    \begin{eqnarray}\label{strong_hrn_1}
  \sup_{B_{\sigma'r}}u  & \leq & \frac{1}{2}\sup_{B_{\sigma r}}u + \frac{c}{(\sigma-\sigma')^\frac{Q}{\e}} \left(\, \dashint_{B_{r}}u^\e \,{\rm d}\xi\right)^\frac{1}{\e} \\*
  & & +\,c\left(\frac{r}{R}\right)^\frac{sp}{p-1}\T(u_-;\xi_0,R) + cr^\frac{sp}{p-1}\|f\|^\frac{1}{p-1}_{L^\infty(B_R)}\,,\notag
  \end{eqnarray}
  where we also used a suitable Young inequality.
  Note that by Jensen's inequality,  recalling that the exponent~$\e$ given by Lemma~\ref{s3_lem4} is in $(0,1)$, we have that
  $$
 \left(\, \dashint_{B_{ r}}u^\e \,{\rm d}\xi\right)^\frac{1}{\e} = \left(\, \dashint_{B_{ r}}u^{\frac{\e}{p}p} \,{\rm d}\xi\right)^\frac{1}{\e} \leq \left(\, \dashint_{B_{ r}}u^p \,{\rm d}\xi\right)^\frac{1}{p} <\infty,
  $$
  since~$u \in L^p(B_{ r})$. Thus the right-hand term in~\eqref{strong_hrn_1} is finite. Finally, the classic iteration Lemma~\ref{giaquinta_giusti}, with $g(t):=\sup_{B_{ t}}u$, $\tau=\sigma r$, $t:=\sigma'r$, $ \theta := \frac{Q}{\e}$, and $\zeta:=\frac{1}{2}$, yields
  \begin{equation*}
	\sup_{B_{r}}u \ \leq \ c \left(\, \dashint_{B_{ r}}u^\e \,{\rm d}\xi\right)^\frac{1}{\e}\, +\, c\left(\frac{r}{R}\right)^\frac{sp}{p-1}\!\T(u_-;\xi_0,R) \,+\, cr^\frac{sp}{p-1}\|f\|^\frac{1}{p-1}_{L^\infty(B_R)}\,,
  \end{equation*}
which gives the desired inequality~\eqref{eq_harnack} thanks to the result in Lemma~\ref{s3_lem4}.
  \end{proof}

\vspace{2mm}
  \subsection{Proof of Theorem~\ref{thm_weak}}
Let ${1}/{2} < \sigma' < \sigma \leq {3}/{4}$, and let $\p \in C^\infty_0 (B_{{\sigma r}})$ be such that $\p \equiv 1 \, {\rm on } \, B_{\sigma' r}$, and $|\nabla_{\h^n}\p| \leq {4}/{(\sigma-\sigma')r}$.
\vspace{1mm}

We firstly deal with the case when~$sp < Q$. In such a case one can apply Theorem~\ref{sobolev} to the function~$w\p$, with $w := \tilde{u}^\frac{p-q}{p}=(u+d)^\frac{p-q}{p}$, to get
  \begin{equation}\label{hrn_1}
  \left(\, \dashint_{B_{r}}  |w(\xi) \p(\xi)|^{p^*}{\rm d}\xi\right)^\frac{p}{p^*}\ \leq\ c \frac{r^{sp}}{r^Q} \int_{B_{r}} \int_{B_{r}}|w(\xi)\p(\xi)-w(\eta)\p(\eta)|^p {\rm d}\nu.
  \end{equation}
Now, notice that, by the very definition of~$\p$, it follows
  $$
  |\p(\xi)-\p(\eta)|^p \, \leq\, c |\eta^{-1} \circ \xi|^p_{\h^n} \sup_{B_{{\sigma r}}} |\nabla_{\h^n}\p|^p
  \,\leq\, \frac{c}{\big((\sigma-\sigma')r\big)^p} |\eta^{-1} \circ \xi|^p_{\h^n},
  $$
  which yields
  \begin{equation}\label{hrn_2}
  \int_{B_{r}}\int_{B_{r}} \big(\max\{w(\xi),w(\eta)\}\big)^p |\p(\xi)-\p(\eta)|^p {\rm d}\nu\ \leq\ \frac{c r^{-sp}}{(\sigma-\sigma')^p}\int_{B_{{\sigma r}}} w^p(\eta)\, {\rm d}\eta\,,
  \end{equation}
  where we have also used the estimate below
  $$
  \int_{B_{r}} d_{\rm o}(\eta^{-1} \circ \xi)^{-Q-sp+p}{\rm d}\xi\,\leq\, cr^{p-sp}.
  $$ 
 Collecting the estimates~\eqref{hrn_1} and~\eqref{hrn_2} with the Caccioppoli inequality of Theorem~\ref{s3_lem2}, we obtain
 \begin{eqnarray}\label{hrn_3}
 &&\left( \,\dashint_{B_{r}}  |w(\xi) \p(\xi)|^{p^*} {\rm d}\xi \right)^\frac{p}{p^*} \notag\\*
  &&\quad\leq c   \left(\frac{c}{(\sigma-\sigma')^p}+d^{1-p}\left(\frac{r}{R}\right)^{sp}[\T(u_-;\xi_0,R)]^{p-1}\right)\,\dashint_{B_{r}}w^p(\xi)\p^p(\xi)\,{\rm d}\xi\\
  &&\qquad   +\, cd^{1-q}r^{sp} \|f\|_{L^\infty(B_R)}\,,\notag
  \end{eqnarray}
  where we have used the fact that 
  \begin{equation*}
  \sup_{\xi \in \textup{supp} \, \p} \, \int_{\h^n \smallsetminus B_{r}} d_{\rm o}(\eta^{-1} \circ \xi)^{-Q-sp} {\rm d}\eta \leq c r^{-sp}.
  \end{equation*}

Now, we choose $d$ as in the proof of Lemma~\ref{s3_lem1}; see~\eqref{8.0} there. It follows
  \begin{equation}\label{19.0}
  \left(\  \dashint_{B_{{\sigma' r}}}  \tilde{u}^{(p-q)\frac{Q}{Q-sp}}\,{\rm d}\xi\right)^\frac{Q-sp}{Q}
  \ \leq\  \frac{c}{(\sigma-\sigma')^p}\, \dashint_{B_{\sigma r}} \tilde{u}^{p-q}\, {\rm d}\xi + cr^{sp} \|f\|_{L^\infty(B_R)}\,,
  \end{equation}
  where $c$ depends only on $n$, $p$, $s$ and the structural constant $\Lambda$ defined in \eqref{def_lambda}.
  
 Let $t = (p-q){Q}/({Q-sp})$ for any $q \in (1,p)$. 
 Thanks to a standard finite Moser iteration, the inequality in~\eqref{19.0} becomes
  \begin{eqnarray}\label{hrn_4}
   \left( \, \dashint_{B_{\frac{r}{2}}}  u^t{\rm d}\xi\right)^\frac{1}{t} \ \leq\  c \left( \,\dashint_{B_\frac{3 r}{4}} \tilde{u}^{t'} {\rm d}\xi\right)^\frac{1}{t'}  + \, cr^\frac{Qsp}{t(Q-sp)}\|f\|_{L^\infty(B_R)}^\frac{Q}{t(Q-sp)}\,,
  \end{eqnarray}
for any  $0<t'<t<{(p-1)Q}/(Q-sp)$. 
%
%
  Since ${6r}< R$, we can apply Lemma~\ref{s3_lem4} with $\e=t'$ there; it follows
$$
  	\left( \, \dashint_{B_{\frac{r}{2}}}  u^t{\rm d}\xi\right)^\frac{1}{t} \ \leq\  c\inf_{B_{\frac{3r}{4}}} u +c \left(\frac{r}{R}\right)^\frac{sp}{p-1} \T(u_-;\xi_0,R)  + \, cr^\frac{Qsp}{t(Q-sp)}\|f\|_{L^\infty(B_R)}^\frac{Q}{t(Q-sp)}\,,
$$
which provides the desired inequality, up to relabelling~$r$. 
  
  \vspace{2mm}
	We investigate now the case when~$sp=Q$.   Fix~$ 0 < \varepsilon < s$, and set~$s_\varepsilon := s -\varepsilon$. Since~$s_\varepsilon p < sp=Q$, we can make use of the Sobolev inequality for the function~$w \p$, to get
	\begin{eqnarray}\label{hrn_5}
	&&	\left( \, \dashint_{B_r} |w(\xi)\p(\xi)|^{p^*_\varepsilon} \, {\rm d}\xi\right)^\frac{p}{p^*_\varepsilon} \nonumber\\*
 && \leq  c \, \frac{r^{s_\varepsilon p}}{r^Q} \int_{B_r}\int_{B_r}|w(\xi)\p(\xi)-w(\eta)\p(\eta)|^p |\eta^{-1}\circ \xi|_{\h^n}^{-Q-s_\varepsilon p} \, {\rm d}\xi {\rm d}\eta\notag\\*
		& \leq & c \, r^{s_\varepsilon p} \left( \quad \dashint_{B_r} |w(\xi)\p(\xi)|^p \, {\rm d}\xi \right.\\*
		&& + \left. \,  \dashint_{B_r}\int_{B_r}|w(\xi)\p(\xi)-w(\eta)\p(\eta)|^p |\eta^{-1}\circ \xi|_{\h^n}^{-Q-sp} \, {\rm d}\xi {\rm d}\eta \right) \,,\notag
	\end{eqnarray}
    where we have obtained the last inequality by following by the same argument used at the end of the proof of Lemma~\ref{s3_lem3}. Now, we choose~$d$ as in~\eqref{8.0}, so that the inequality in~\eqref{hrn_5} becomes
	\begin{eqnarray*}		
		&&\left( \, \dashint_{B_r} |w(\xi)\p(\xi) |^{p^*_\varepsilon} \, {\rm d}\xi\right)^\frac{p}{p^*_\varepsilon}\notag\\*
		&&\quad \leq  c\, r^{-\varepsilon p} \left( \frac{c}{(\sigma-\sigma')^p} +d^{1-p}\left(\frac{r}{R}\right)^{sp} [\T(u_-;\xi_0,R)]^{p-1}+ r^Q\right) \, \dashint_{B_r} |(w\p)(\xi)|^p \, {\rm d}\xi\\*
		&&\quad\quad + \, c d^{1-q}r^{(s-\varepsilon) p} \|f\|_{L^\infty(B_R)}\\*[0.8ex]
		&&\quad \leq \frac{c}{(\sigma-\sigma')^p}  \,  \dashint_{B_r} |(w\p)(\xi)|^p \, {\rm d}\xi + \, c r^{(s-\varepsilon)p} \|f\|_{L^\infty(B_R)} \,.
	\end{eqnarray*}
	Thus, recalling the definition of~$w$ and that of the cut-off function~$\p$, we have that
	\begin{equation*}
		\left( \, \dashint_{B_{\sigma' r}} \tilde{u}^{(p-q)\frac{s}{\varepsilon}} \, {\rm d}\xi\right)^\frac{\varepsilon}{s} \leq \frac{c}{(\sigma-\sigma')^p} \, \dashint_{B_{\sigma r}} \tilde{u}^{p-q} \, {\rm d}\xi + c \, r^{(s-\varepsilon)p}\|f\|_{L^\infty(B_R)} \,.
	\end{equation*}
	Set~$t = (p-q){s}/{\varepsilon}$, for any~$q \in (1,p)$;  a standard application of the finite Moser iteration yields
	\begin{eqnarray*}
		\left( \, \dashint_{B_{\frac{r}{2}}}  u^t{\rm d}\xi\right)^\frac{1}{t} \ \leq\  c \left( \,\dashint_{B_\frac{3 r}{4}} \tilde{u}^{t'} {\rm d}\xi\right)^\frac{1}{t'}  + \, cr^\frac{Q(s-\varepsilon)}{t \varepsilon}\|f\|_{L^\infty(B_R)}^\frac{s}{t \varepsilon}\,,
	\end{eqnarray*}
	for any  $0<t'<t<{(p-1)s}/{\varepsilon}$. 
	\vspace{2mm}
	
	Finally, we can conclude as in the proof in the case when~$sp <Q$; that is, it suffices to apply Lemma~\ref{s3_lem4}, with $\e=t'$ there, in order to get 
	$$
	\left( \, \dashint_{B_{\frac{r}{2}}}  u^t{\rm d}\xi\right)^\frac{1}{t} \ \leq\  c\inf_{B_{\frac{3r}{4}}} u +c \left(\frac{r}{R}\right)^\frac{Q}{p-1} \T(u_-;\xi_0,R)  + \, c \, r^\frac{Q(s-\varepsilon)}{t \varepsilon}\|f\|_{L^\infty(B_R)}^\frac{s}{t \varepsilon  }\,,
	$$
	which provides the desired inequality, up to relabelling~$r$. 
  \vspace{2mm}
  
  The case when $sp>Q$ can be deduced as in the latter, without relevant modifications, choosing the parameter~$\varepsilon > (s-{Q}/{p})$. With such a choice in hand, it plainly follows that~$s_\varepsilon p < Q$, and thus one can use the Sobolev embedding in Theorem~\ref{sobolev} and proceed as done in the limit case when~$sp=Q$.\hfill$\Box$

\vspace{3mm}
  \section{The fractional subLaplacian case}\label{sec_limit}
  In this section we focus our attention on the case when~$p=2$ in the particular situation in which the operator~$\l$ defined in~\eqref{operatore} does coincide with the fractional subLaplacian~$(-\Delta_{\h^n})^s$ on~$\h^n$, so that problem~\eqref{problema} does reduce to
      \begin{eqnarray}\label{fractional_pbm}
           \begin{cases}
   (-\Delta_{\h^n})^s u = 0  & \text{in} \ \Omega\subset\h^n,\\[0.4ex]
     u = g  & \text{in}\ \h^n \smallsetminus \Omega,
          \end{cases}
               \end{eqnarray}
   where $g \in H^s(\h^n)\equiv W^{s,2}(\h^n)$.
   \vspace{1mm}
   
   We now recall the precise definition of the fractional subLaplacian operator; that is, 
   \begin{equation}\label{frac_lap}
	(-\Delta_{\h^n})^s u(\xi) = C(n,s)\ P.~\!V.\int_{\h^n} \frac{u(\xi)-u(\eta)}{|\eta^{-1} \circ \xi |_{\h^n}^{Q+2s}}\, {\rm d}\eta, \qquad \forall \xi \in \h^n,
    \end{equation}
where~$|\cdot|_{\h^n}$ is the norm defined in~\eqref{korany_folland}, and~$C(n,s)$ is given by
    \begin{equation}\label{C(n,s)}
    	C(n,s) = \frac{c_1(n,s)\omega_{2n}}{n c_2(n,s)},
    \end{equation} 
    with
    \begin{equation}\label{c(n,s)}
    	c_1(n,s) = \left(\, \int_{\r^{2n+1}} \frac{1-\cos(x_1)}{\|\eta\|^{1+2(n+s)}}\,{\rm d}\eta\right)^{-1} \quad \text{and} \quad
    	c_2(n,s) = \int_{\partial B_1}\frac{x_1^2}{|\eta|_{\h^n}^{Q+2s}}\,{\rm d}\sigma(\eta)\,,
    \end{equation}
    for  $\eta := (x_1,\cdots,x_{2n},t)$.
In the display above, we denote by $\|\cdot\|$ the standard Euclidean norm on~$\r^{2n+1}$, and  by $\sigma$ the surface measure on~$\partial B_1$; see, e.~\!g., Proposition~1.15 in~\cite{FS82}.
   
   \vspace{2mm}
    \subsection{Asymptotics of the fractional subLaplacian}
%
%
%
    \begin{proof}[Proof of Proposition~{\rm \ref{limit_of_fraclap}}]
For the sake of readability, we denote the points~$\xi$ in $\h^n$ as follows,
	$$
	\xi := (x_1,\dots,x_{2n},t)\,.
	$$
	Also, it is convenient to use the weighted second order integral definition of the fractional sublaplacian,
			$$
			(-\Delta_{\h^n})^s u (\xi) \, =\, -\frac{1}{2}C(n,s) \int_{\h^n}\frac{u(\xi \circ \eta)+ u(\xi \circ \eta^{-1} )-2u(\xi)}{|\eta|_{\h^n}^{Q+2s}}\,{\rm d}\eta, \quad \forall \xi \in \h^n\,;
			$$
			see, e.~\!g., \cite[Proposition~1.4]{FMPPS18} and \cite[Proposition~3.2]{DPV12}.
We also recall that,  given $D^{2,*}_{\h^n} u (\xi) $ in~\eqref{symm_hess}, one has
	$$
	\Delta_{\h^n} u \equiv \textup{Tr}(D^{2,*}_{\h^n}u) = \sum_{i=1}^{2n} X_i^2 u\,.
	$$	

As the computation below shows, we have no contribution outside the unit ball in the limit as $s$ goes to $1^-$,
	\begin{eqnarray*}
		\left|\quad \int_{\h^n \smallsetminus B_1}\frac{u(\xi \circ \eta)+ u(\xi \circ \eta^{-1} )-2u(\xi)}{|\eta|_{\h^n}^{Q+2s}} \, {\rm d}\eta \quad \right| & \leq  & 4\|u\|_{L^\infty(\h^n)}  \int_{\h^n \smallsetminus B_1}\frac{1}{|\eta|_{\h^n}^{Q+2s}} \, {\rm d}\eta\\*[0.8ex]
		& \leq & 4c\|u\|_{L^\infty(\h^n)}.
	\end{eqnarray*}
	Hence, recalling~\eqref{C(n,s)} and~\eqref{limit_c(n,s)}, it follows
	\begin{equation}
		\lim_{s \rightarrow 1^-}-\frac{C(n,s)}{2}\int_{\h^n \smallsetminus B_1(0)}\frac{u(\xi \circ \eta)+ u(\xi \circ \eta^{-1} )-2u(\xi)}{|\eta|_{\h^n}^{Q+2s}} \, {\rm d}\eta\,=\,0.
	\end{equation}
	It remain to estimate the integral contribution in the unit ball. In view of Proposition~\ref{prop_taylor}, for any $\eta =(x,t)$, one get
	\begin{equation}\label{ss3_e4}
		u(\xi \circ \eta^{-1})= P_2(u,\xi)(\xi \circ \eta^{-1}) + {\text{o}}(|\eta|_{\h^n}^3) \quad \text{as} \ |\eta|_{\h^n}\to 0\,,
	\end{equation}
	where $P_2(u,\xi)$ is the Taylor polynomial of $\h^n$-degree~2 associated to~$u$ and centered at~$\xi$ presented in Section~\ref{sec_heis}. 
	
	Also, by the very definition of Taylor polynomial, it follows
	$$
	P_2(u,\xi)(\xi \circ \eta^{-1})\, =\, P_2\big(u(\xi \circ \cdot),0\big)(\eta^{-1})  \, = \, u(\xi)- \big(\nabla_{\h^n}u(\xi),\,\partial_t u(\xi)\big)\cdot \eta \, + \, \frac{1}{2}\langle x,D^{2,*}_{\h^n}u(\xi) \cdot x\rangle.
	$$
Thus, inequality~\eqref{ss3_e4} yields
	\begin{equation*}
		u(\xi \circ \eta^{-1}) =  u(\xi)- \big(\nabla_{\h^n}u(\xi),\,\partial_t u(\xi)\big)\cdot \eta + \frac{1}{2}\langle x,D^{2,*}_{\h^n}u(\xi) \cdot x\rangle + {\text{o}}(|\eta|_{\h^n}^3)\quad \text{as} \ |\eta|_{\h^n}\to 0\,.
	\end{equation*}
Using again the result in~Proposition~\ref{prop_taylor}, we arrive at
	\begin{eqnarray*}
		&& \left|\quad \int_{B_1}\frac{u(\xi \circ \eta)+ u(\xi \circ \eta^{-1} )-2u(\xi)-\langle x ,D^{2,*}_{\h^n}u(\xi) \cdot x \rangle}{|\eta|_{\h^n}^{Q+2s}} \, {\rm d}\eta \quad \right|\notag\\*[0.8ex]
		&& \qquad\qquad\qquad\qquad\qquad\quad \leq \int_{B_1}\frac{|u(\xi \circ \eta) -P_2(u(\xi \circ \cdot),0)(\eta)|+{\text{o}}(|\eta|_{\h^n}^3)}{|\eta|_{\h^n}^{Q+2s}} \, {\rm d}\eta\notag\\*[0.8ex]
		&& \qquad\qquad\qquad\qquad\qquad\quad \leq \int_{B_1}\frac{{\text{o}}(|\eta|_{\h^n}^3)}{|\eta|^{Q+2s}_{\h^n}} {\rm d}\eta\notag\\*[0.8ex]
		&& \qquad\qquad\qquad\qquad\qquad\quad \leq \int_{B_1}\frac{1}{|\eta|_{\h^n}^{Q-1}}\  =:\  c(n).
	\end{eqnarray*}
	The preceeding  estimate yields
	\begin{eqnarray}\label{ss3_e5}
		&& \lim_{s \rightarrow 1^-}-\frac{C(n,s)}{2}\int_{B_1}\frac{u(\xi \circ \eta)+ u(\xi \circ \eta^{-1} )-2u(\xi)}{|\eta|_{\h^n}^{Q+2s}} \, {\rm d}\eta \notag\\
		&& \qquad \qquad \qquad \qquad \qquad \qquad = \lim_{s \rightarrow 1^-} -\frac{C(n,s)}{2}\int_{B_1}\frac{\langle x,D^{2,*}_{\h^n}u(\xi) \cdot x \rangle }{|\eta|_{\h^n}^{Q+2s}}\,{\rm d}\eta.
	\end{eqnarray}
	
	Now, notice that for any~$i \neq j$ it holds
	\begin{eqnarray*}
		&&\int_{B_1}\left(\frac{1}{2}(X_iX_ju(\xi)+X_jX_iu(\xi))\right) x_i \,  \cdot \, x_j \,{\rm d}\eta\\*
  &&\quad
		= - \int_{B_1}\left(\frac{1}{2}(X_iX_ju(\xi)+X_jX_iu(\xi))\right) \tilde{x}_i \, \cdot \, \tilde{x}_j\, {\rm d}\tilde{\eta},
	\end{eqnarray*}
	where $\tilde{x}_{i} =x_i$, for $i \neq j$, and $\tilde{x}_j =- x_j$. Therefore,
	\begin{equation}\label{ss3 _e6}
		\int_{B_1}\left(\frac{1}{2}(X_iX_ju(\xi)+X_jX_iu(\xi))\right) x_i \,  \cdot \, x_j {\rm d}\eta =0, \qquad \text{for}\ i \neq j.
	\end{equation}
	Moreover, for any fixed index~$i$, making using of the polar coordinates, namely Proposition~1.15 in~\cite{FS82}, we get that there exists a unique Borel measure~$\sigma$ on $B_1$ such that, up to permutations,
	\begin{eqnarray*}
		\int_{B_1}\frac{X_i^2 u(\xi) x_i^2}{|\eta|_{\h^n}^{Q+2s}}\, {\rm d}\eta & = & X_i^2 u(\xi) \int_{0}^{1}\int_{\partial B_1}\frac{x_1^2r^{Q+1}}{|{\Phi}_{r}(\eta)|_{\h^n}^{Q+2s}}\, {\rm d}\sigma(\eta)\,{\rm d}r\notag\\*[0.8ex]
		& = & X_i^2 u(\xi)\int_{\partial B_1}\frac{x_1^2}{|\eta|_{\h^n}^{Q+2s}}\,{\rm d}\sigma(\eta)\int_{0}^{1}\frac{1}{r^{2s-1}}\, {\rm d}r\notag\\*[0.8ex]
		& =& \frac{c_2(n,s)}{2(1-s)}X_i^2u(\xi).
	\end{eqnarray*}
	where~$c_2(n,s)$ is defined in~\eqref{c(n,s)}.
	\vspace{1mm}
	
	Hence, recalling the result in~\cite[Corollary~4.2]{DPV12} which shows that
	\begin{equation}\label{limit_c(n,s)}
		\lim_{s \rightarrow 1^-}\frac{c_1(n,s)}{s(1-s)}= \frac{4n}{\omega_{2n}},
	\end{equation}
	where~$\omega_{2n}$ denotes the~$(2n)$-dimensional Lebesgue measure of the unit sphere~$\mathbb{S}^{2n}$, we finally obtain that
	\begin{eqnarray*}
		\lim_{s \rightarrow 1^-}(-\Delta_{\h^n})^s u(\xi)  & = &\lim_{s \rightarrow 1^-} -\frac{C(n,s)}{2}\int_{B_1} \frac{\langle x,D^{2,*}_{\h^n}u(\xi)  \cdot x\rangle }{|\eta|_{\h^n}^{Q+2s}}\,{\rm d}\eta\\*
		& =& \lim_{s \rightarrow 1^-} -\frac{C(n,s)}{2} \sum_{i=1}^{2n}\int_{B_1}\frac{X_i^2 u(\xi) x_i^2}{|\eta|_{\h^n}^{Q+2s}}\, {\rm d}\eta\\*
		& =& \lim_{s \rightarrow 1^-} -\frac{c_1(n,s)\omega_{2n}}{4n(1-s)} \sum_{i=1}^{2n}X_i^2 u(\xi) \ \equiv \  -\Delta_{\h^n}u(\xi)\,,
	\end{eqnarray*}
	as desired.
    \end{proof}

\vspace{0mm}
    \subsection{Robustness of the nonlocal Harnack estimates}
    
The proofs of Theorem~\ref{thm_harnack2} and Theorem~\ref{thm_weakharnack2} can be plainly deduced from the ones in Section~\ref{sec_weak} for the subcritical case, by taking there $p=2$, $f\equiv0$, and the Korani-Folland norm in place of the generic  homogeneous norm~$d_{\rm o}$. Below we stated the related needed lemmata, by indicating only the modifications in the estimates where a special care on the involved quantities is needed in order to successfully obtaining the desired robustness in the limit as $s$ goes to $1$.
  \vspace{3mm}

  Firstly, we need the related positivity expansion, which can be condensed in the following two lemmata.
    \begin{lemma}\label{s4_lem1}
   	Let $u \in H^s(\h^n)$, with $s \in (0,1)$, be a weak supersolution to problem~\eqref{fractional_pbm} such that $u \geq 0$ in $B_R(\xi_0) \subset \Omega$. Let $k \geq 0$. Suppose that there exists~$\sigma \in (0,1]$ such that
   	\begin{equation}\label{s4_lem1_hyp1}
   		|B_{6r} \cap \{u \geq k\}| \,\geq\, \sigma|B_{6r}|,
   	\end{equation}
   	for some $r>0$ such that $B_{8r} \equiv B_{8r}(\xi_0) \subset B_R(\xi_0)$. Then there exists a constant $\bar{\textbf{c}} \equiv \bar{\textbf{c}}\,(n)$ such that
   	\begin{equation}
   		\left|B_{6r} \cap \left\{u \leq 2\delta k - \frac{1-s}{2}\left(\frac{r}{R}\right)^{2s}\T(u_-;\xi_0,R)\right\}\right| \,\leq\, \frac{\bar{\textbf{c}}}{\sigma \log \frac{1}{2\delta}}|B_{6r}|
   	\end{equation}
   	holds for all $\delta \in (0,1/4)$, where $\T(\cdot)$ is defined in~\eqref{tail} taking $p=2$ there.
   \end{lemma}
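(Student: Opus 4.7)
The plan is to adapt the proof of Lemma~\ref{s3_lem1} to the linear case $p=2$ with $f\equiv 0$, while rigorously tracking every $s$-dependence that in the general case was absorbed into the structural constant, so that the final constant $\bar{\textbf{c}}$ depends on~$n$ alone (in particular $\Lambda=1$, since we work directly with the Kor\'anyi--Folland norm). After reducing to the case $\T(u_-;\xi_0,R)>0$, I would fix a cut-off $\phi\in C_0^\infty(B_{7r})$ with $\phi\equiv 1$ on $B_{6r}$ and $|\nabla_{\h^n}\phi|\leq c/r$, set
\begin{equation*}
d:=\frac{1-s}{2}\left(\frac{r}{R}\right)^{2s}\T(u_-;\xi_0,R),\qquad \tilde u:=u+d,
\end{equation*}
and test the weak formulation of $(-\Delta_{\h^n})^s u=0$ with $\psi:=\tilde u^{-1}\phi^2$. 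This produces the same three-piece decomposition $I_1+I_2+I_3\geq 0$ as in~\eqref{s3 lem 1}. The $(1-s)$ weight built into $d$ is exactly what is needed to match the fractional Poincar\'e constant appearing at the end of the argument.

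The bulk of the work is the explicit bookkeeping of the $s$-dependence in $I_1$, $I_2$, $I_3$. For $I_1$ I would invoke the logarithmic lemma of~\cite{MPPP21} with $p=2$, keeping every $s$-dependent constant unhidden: computing $\int_{B_{6r}}|\eta^{-1}\circ\xi|_{\h^n}^{2-Q-2s}\,{\rm d}\eta\leq c\,r^{2-2s}/(1-s)$ produces a remainder of order $r^{Q-2s}/(s(1-s))$ after the negative log-difference contribution is extracted. The tail terms $I_2+I_3$ are split as in~\eqref{9.0}--\eqref{9.2}; the factor $d^{-1}R^{-2s}\T(u_-;\xi_0,R)$ arising from the splitting collapses, thanks precisely to the specific choice of $d$, to the same order $r^{Q-2s}/(s(1-s))$ with a constant depending only on~$n$. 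Summing, one arrives at
\begin{equation*}
\int_{B_{6r}}\int_{B_{6r}}\left|\log\frac{\tilde u(\xi)}{\tilde u(\eta)}\right|^2{\rm d}\nu\ \leq\ \frac{c(n)\,r^{Q-2s}}{s(1-s)}.
\end{equation*}

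From here the argument parallels the Euclidean one. For $\delta\in(0,1/4)$ I introduce the truncated logarithm $v:=(\min\{\log(1/(2\delta)),\log((k+d)/\tilde u)\})_+$, which satisfies $|v(\xi)-v(\eta)|\leq|\log(\tilde u(\xi)/\tilde u(\eta))|$. Appealing to the Heisenberg fractional Poincar\'e inequality with the sharp $s(1-s)$ scaling, in line with the Bourgain--Brezis--Mironescu asymptotics,
\begin{equation*}
\int_{B_{6r}}|v-(v)_{B_{6r}}|^2\,{\rm d}\xi\ \leq\ c(n)\,s(1-s)\,r^{2s}\int_{B_{6r}}\int_{B_{6r}}|v(\xi)-v(\eta)|^2\,{\rm d}\nu,
\end{equation*}
the two $s(1-s)$ factors cancel and one obtains $\int_{B_{6r}}|v-(v)_{B_{6r}}|^2\,{\rm d}\xi\leq c(n)\,r^Q$. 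Since $\{v=0\}=\{u\geq k\}$ has measure at least $\sigma|B_{6r}|$ by~\eqref{s4_lem1_hyp1}, averaging yields $\log(1/(2\delta))\leq \sigma^{-1}[\log(1/(2\delta))-(v)_{B_{6r}}]$, and integrating this over $B_{6r}\cap\{v=\log(1/(2\delta))\}$, followed by H\"older, produces $|B_{6r}\cap\{v=\log(1/(2\delta))\}|\log(1/(2\delta))\leq \bar{\textbf{c}}(n)\sigma^{-1}|B_{6r}|$. The conclusion is immediate from the inclusion $\{u\leq 2\delta k-d\}\subset\{\tilde u\leq 2\delta(k+d)\}\subset\{v=\log(1/(2\delta))\}$, valid for $\delta<1/4$.

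The principal obstacle is establishing, or at least exhibiting a workable substitute for, the sharp Heisenberg fractional Poincar\'e inequality with the explicit $s(1-s)$ weight: while the BBM asymptotics are well documented in the Euclidean setting, I am not aware of a verbatim Heisenberg analogue in the literature, and one would either need to adapt the BBM proof via heat-semigroup techniques on $\h^n$, or bypass it entirely through an elementary ball-doubling argument \`a la Kassmann~\cite{Kas11}. A secondary technical point is to verify that the logarithmic lemma of~\cite{MPPP21} admits the restatement with $s$-dependence displayed explicitly rather than absorbed into its structural constant; inspection of its proof shows that this rearrangement is straightforward, amounting only to following the $1/(1-s)$ factor produced by the integral $\int_{B_{6r}}|\eta^{-1}\circ\xi|_{\h^n}^{2-Q-2s}\,{\rm d}\eta$ through to the final step.
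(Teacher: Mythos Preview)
Your proposal is correct and follows the same route as the paper: the paper's entire proof consists of the single instruction to repeat the argument of Lemma~\ref{s3_lem1} with $d$ replaced by $\tfrac{1-s}{2}(r/R)^{2s}\T(u_-;\xi_0,R)$, which is exactly what you do. In fact you are considerably more explicit than the paper about the mechanism by which the constant $\bar{\textbf c}$ loses its $s$-dependence: you correctly trace the $1/(1-s)$ factor appearing in the remainder terms (from $\int_{B_{6r}}|\eta^{-1}\circ\xi|_{\h^n}^{2-Q-2s}\,{\rm d}\eta$ and from $d^{-1}R^{-2s}\T(u_-;\xi_0,R)$ with the new $d$) and observe that it must be absorbed by a matching $(1-s)$ factor in the fractional Poincar\'e inequality, via the Bourgain--Brezis--Mironescu scaling. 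The paper invokes a Poincar\'e inequality without displaying this factor, so your identification of the sharp Poincar\'e constant on~$\h^n$ as the one nontrivial point to verify is well placed; the paper leaves this implicit.
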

   
   \begin{proof}
 It suffices to repeat the proof of Lemma~\ref{s3_lem1} by choosing the parameter~$d$ in~\eqref{8.0} as follows,
%
   	$$
   	d := \frac{1-s}{2}\left(\frac{r}{R}\right)^{2s}\T(u_-;\xi_0,R)\,. 
   	$$
   \end{proof}

    \begin{lemma}\label{s4_lem3}
   	Let  $s \in (0,1)$ and let $u \in H^s(\h^n)$ be a weak supersolution to problem~\eqref{fractional_pbm} such that $u \geq 0$ in $B_R(\xi_0) \subset \Omega$. 
   	
   	Let $k \geq 0$ and suppose that there exists~$\sigma \in (0,1]$ such that
   	$$
   	|B_{6r} \cap \{u \geq k\}| \geq \sigma |B_{6r}|,
   	$$
   	for some $r$ satisfying $0 < 6r<R$. Then, there exists a constant~$\delta \in (0,1/4)$ depending on $n$ and~$\sigma$ for which
   	\begin{equation}\label{s4_lem3_1}
   		\inf_{B_{4r}}u \,\geq\, \delta k -(1-s)\left(\frac{r}{R}\right)^{2s}\T(u_-;\xi_0,R)\,.
   	\end{equation}
   \end{lemma}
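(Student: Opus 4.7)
The plan is to mirror the proof of Lemma~\ref{s3_lem3} with $p=2$ and $f \equiv 0$, modifying only where the $(1-s)$ prefactor must be tracked in order to guarantee robustness as $s \to 1^-$. Since $Q = 2n+2 \geq 4 > 2 > 2s$, only the subcritical regime $2s < Q$ plays a role, so no alternative argument is needed. First I would dispose of the trivial cases: if $k=0$ the statement follows at once from $u \geq 0$ on $B_R$, and otherwise I may reduce to the regime
$$
(1-s)\left(\frac{r}{R}\right)^{2s}\T(u_-;\xi_0,R) \leq \delta k,
$$
since the opposite case makes \eqref{s4_lem3_1} vacuous for any $\delta \in (0, 1/4)$.

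Next I would fix $r \leq \rho \leq 6r$, a cutoff $\phi \in C^\infty_0(B_\rho)$ with $0 \leq \phi \leq 1$ and $|\nabla_{\h^n}\phi| \leq c/r$, a level $\ell \in (\delta k, 2\delta k)$, and insert $\psi := w_- \phi^2$, with $w_- := (\ell - u)_+$, into Definition~\ref{solution}. The resulting inequality splits into the local term $J_1$ and nonlocal terms $J_2, J_3$ exactly as in the proof of Lemma~\ref{s3_lem3}; the local term produces, up to absolute constants,
$$
-c \int_{B_\rho}\!\int_{B_\rho}|w_-(\xi)\phi(\xi) - w_-(\eta)\phi(\eta)|^2 \, {\rm d}\nu + c\int_{B_\rho}\!\int_{B_\rho}\big(\max\{w_-(\xi),w_-(\eta)\}\big)^2|\phi(\xi)-\phi(\eta)|^2\,{\rm d}\nu,
$$
while splitting $J_2$ according to the sign of $u(\eta)$ on $\h^n \smallsetminus B_\rho$ yields, as in the general case,
$$
J_2 + J_3 \leq c\ell \left(\sup_{\xi \in \operatorname{supp}\phi}\int_{\h^n\smallsetminus B_\rho}(\ell + u_-(\eta))|\eta^{-1}\circ \xi|_{\h^n}^{-Q-2s}{\rm d}\eta\right)|B_\rho \cap \{u < \ell\}|.
$$

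The iteration is then the same as in Lemma~\ref{s3_lem3} with exponent $p=2$: set
$$
\ell_j := \delta k + 2^{-j-1}\delta k, \qquad \rho_j := 4r + 2^{1-j}r, \qquad A_j := \frac{|B_{\rho_j} \cap \{u < \ell_j\}|}{|B_{\rho_j}|},
$$
and cutoffs $\phi_j$ supported between $\rho_{j+1}$ and $\rho_j$. The reductive assumption gives
$$
\ell_0 = \tfrac{3}{2}\delta k \leq 2\delta k - \tfrac{1-s}{2}\left(\tfrac{r}{R}\right)^{2s}\!\T(u_-;\xi_0,R),
$$
so that Lemma~\ref{s4_lem1} furnishes the base-case estimate $A_0 \leq \bar{\textbf{c}}/(\sigma \log(1/(2\delta)))$. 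Combining the Caccioppoli-type inequality above with the fractional Sobolev embedding of Theorem~\ref{sobolev} applied to $w_j \phi_j$ produces an iterative relation
$$
A_{j+1} \leq c_0 b^j A_j^{1+\beta}, \qquad \beta = \frac{2s}{Q-2s},
$$
with $c_0, b$ depending only on $n$ and on a lower bound for $s$. Choosing $\delta \in (0, 1/4)$ depending only on $n$ and $\sigma$ so that $A_0 \leq c_0^{-1/\beta} b^{-1/\beta^2}$, Lemma~\ref{giusti} gives $A_j \to 0$ and hence $\inf_{B_{4r}} u \geq \delta k$, which combined with the reductive assumption yields \eqref{s4_lem3_1}.

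The main obstacle is to confirm that the tail contribution entering $J_2 + J_3$ is absorbed compatibly with the $(1-s)$ prefactor: with the choice $d = \tfrac{1-s}{2}(r/R)^{2s}\T(u_-;\xi_0,R)$ one has $d^{-1}R^{-2s}\T(u_-;\xi_0,R) \lesssim (1-s)^{-1}r^{-2s}$, and this $(1-s)^{-1}$ must be shown to combine harmlessly with the Sobolev and log constants in the forthcoming proofs of Theorems~\ref{thm_harnack2}--\ref{thm_weakharnack2} so that the final tail coefficient is $(1-s)$ and not merely $O(1)$. Within the present lemma, however, the reductive assumption $d \leq \delta k$ suffices to make the $(1-s)^{-1}$ factor drop out of the nonlocal bound, and the remaining constants depend only on $n$ and $\sigma$, as claimed.
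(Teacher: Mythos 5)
Your proposal is correct and follows exactly the route of the paper, whose own proof of this lemma consists precisely of the instruction to rerun the proof of Lemma~\ref{s3_lem3} with $p=2$, $f\equiv 0$, the hypothesis~\eqref{s3_lem3_2} replaced by $(1-s)\left(r/R\right)^{2s}\T(u_-;\xi_0,R)\le\delta k$, and Lemma~\ref{s4_lem1} supplying the base-case density estimate; your observation that only the subcritical regime $2s<Q$ occurs is also consistent with this. One caveat on your closing remark: the reductive assumption alone gives $\left(r/R\right)^{2s}\T(u_-;\xi_0,R)\le \delta k/(1-s)$, so in the analogue of~\eqref{13.0} the tail contribution is bounded by $c\,2^{j(Q+2s)}r^{-2s}\ell_j/(1-s)$ rather than $c\,2^{j(Q+2s)}r^{-2s}\ell_j$ --- the factor $(1-s)^{-1}$ does \emph{not} simply drop out of the nonlocal bound by virtue of $d\le\delta k$; for $\delta$ to remain bounded away from $0$ as $s\to 1$ this factor must be offset by the factor $(1-s)$ hidden in the sharp constants of the fractional Sobolev and Poincar\'e inequalities entering~\eqref{s3_lem3_5} and Lemma~\ref{s4_lem1}, a bookkeeping point the paper itself also leaves implicit when it asserts that the proof of Lemma~\ref{s3_lem3} goes through with no further modifications.
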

   \begin{proof}
   The proof is basically contained in that of Lemma~\ref{s3_lem3}. It suffices to replace formula~\eqref{s3_lem3_2} with
   	\begin{equation}\label{s4_lem3_2}
   	(1-s)\left(\frac{r}{R}\right)^{2s} \T(u_-;\xi_0,R) \, \leq \, \delta k\,,
   	\end{equation}
so that the same iterative process will give
   	$$
   	\ell_0 = \frac{3}{2}\delta k \, \leq\, 2 \delta k - \frac{1-s}{2}\left(\frac{r}{R}\right)^{2s}\T(u_-;\xi_0,R)\,,
   	$$
   	which in turn yields the following estimate (in place of~\eqref{12.0r} in Lemma~\ref{s3_lem3})
   	$$
   	\{u < \ell_0\} \subset \left\{u < 2 \delta k - \frac{1-s}{2}\left(\frac{r}{R}\right)^{2s}\T(u_-;\xi_0,R) \right\}\,.
   	$$
The proof will then follow with no further modifications at all.
    \end{proof}
    \vspace{2mm}
    
    As well as in the proof in the general nonlinear framework presented in Section~\ref{sec_weak}, we can obtain for the pure subLaplacian case the analogue of the estimate in~Lemma~\ref{s3_lem4} and that of the Tail control estimate stated in~Lemma~\ref{s3_lem5}. For the sake of the reader, we prefer to restate these results by stressing the novelty of the dependance on~$s$ here. No modifications in the related proofs are essentially needed, thanks to the results obtained in Lemma~\ref{s4_lem1} and Lemma~\ref{s4_lem3}.
    \begin{lemma}\label{s4_lem4}
  	Let $u \in H^s(\h^n)$ be a weak supersolution to~\eqref{fractional_pbm} such that  $u \geq 0 $ in $B_R \equiv B_R(\xi_0) \subset \Omega$. Then, for any $ B_{6r} \equiv B_{6r}(\xi_0) \subset B_R$, there exist constants~$\e \in  (0,1)$  such that
  	\begin{equation*}
  		\left( \, \dashint_{B_{r}} u^\e \,{\rm d}\xi \, \right)^\frac{1}{\e} \leq \textbf{c} \inf_{B_{r}}u + \textbf{c}\,(1-s)\left(\frac{r}{R}\right)^{2s}\T(u_-;\xi_0,R).
  	\end{equation*}
	   	where $\textbf{c}$ depends only on $n$.
    \end{lemma}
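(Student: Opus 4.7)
The plan is to mirror the argument used to prove Lemma~\ref{s3_lem4} in the general nonlinear setting, but with the key refinement that every application of the positivity expansion will be made through Lemma~\ref{s4_lem3} rather than Lemma~\ref{s3_lem3}, so that the $(1-s)$ factor multiplying the tail is carried through all the estimates. Concretely, I would first define the auxiliary function
\[
v := u + (1-s)\left(\frac{r}{R}\right)^{2s} \T(u_-;\xi_0,R),
\]
which is nonnegative in $B_R$ since $u \geq 0$ there, and rewrite Lemma~\ref{s4_lem3} as: whenever $|B_{6\rho}\cap\{v \geq k\}| \geq \sigma|B_{6\rho}|$ for some ball $B_{6\rho} \subset B_R$, one has $\inf_{B_{4\rho}} v \geq \delta k$, with $\delta = \delta(n,\sigma)$.

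Next I would carry out the standard Krylov--Safonov type covering argument (as used in~\cite[Lemma 4.1]{DKP14} and indicated in Lemma~\ref{s3_lem4}): iterate the scale-invariant statement above on dyadic sub-balls of $B_r$ to show that the superlevel sets of $v$ decay geometrically. More precisely, given $\inf_{B_r} v =: m$, one shows that for every $\lambda > m$ the measure $|B_r \cap \{v \geq \lambda\}|$ decays like $\lambda^{-\varepsilon}$ for some $\varepsilon = \varepsilon(n) \in (0,1)$. Once this decay is established, Cavalieri's principle gives the desired weak $L^\varepsilon$-bound
\[
\left(\dashint_{B_r} v^\varepsilon \, {\rm d}\xi\right)^{1/\varepsilon} \leq \textbf{c}\, \inf_{B_r} v,
\]
with $\textbf{c}$ depending only on $n$.

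Translating back to $u$ via the definition of $v$ yields the claim directly. The role of Lemma~\ref{s4_lem1} enters implicitly, since it is the ingredient underlying the proof of Lemma~\ref{s4_lem3}; once the latter is available with the sharp $(1-s)$-weighted tail, the covering/iteration step is purely combinatorial and geometric and introduces no further $s$-dependence beyond what was already present. The only point requiring a bit of care will be the Krylov--Safonov covering itself in the Heisenberg setting: one must use balls defined via the Kor\'anyi--Folland gauge and exploit the doubling property of the Haar measure together with the pseudo-triangle inequality coming from~\eqref{def_lambda}, but this is exactly the covering procedure already made rigorous in~\cite{KS01,DKP14}, adapted as in Lemma~\ref{s3_lem4}.

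The main obstacle is therefore not conceptual but bookkeeping: one must keep track that the constant $\textbf{c}$ in the final estimate does not blow up as $s \to 1^-$. Since the dependence on $s$ enters only through (i) the constant $\delta$ in Lemma~\ref{s4_lem3}, which is bounded away from $0$ uniformly in $s \in (s_0,1)$ for any fixed $s_0$, and (ii) the factor $(1-s)$ already built into the definition of $v$, the resulting $\textbf{c}$ can be chosen independent of $s$, which is precisely the robustness statement needed for the proof of Theorem~\ref{thm_harnack2} in the subsequent section.
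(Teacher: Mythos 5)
Your proposal follows essentially the same route as the paper, which itself gives no detailed argument here but simply invokes the positivity expansion of Lemma~\ref{s4_lem3} together with the Krylov--Safonov covering lemma, exactly as in the proof of Lemma~\ref{s3_lem4} (deferred to~\cite[Lemma 4.1]{DKP14} and~\cite[Lemma~7.2]{KS01}). Your additional bookkeeping of the $(1-s)$ factor through the shifted function $v$ and the uniformity of $\delta$ and $\textbf{c}$ as $s\to 1^-$ is precisely the point the paper intends by ``no modifications in the related proofs are essentially needed.''
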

  
    \begin{lemma}\label{s4_lem5}
   	Let $s \in (0,1)$ and $u \in H^s(\h^n)$ be a weak solution to \eqref{fractional_pbm} such that $u \geq 0$ in $B_R(\xi_0) \subset \Omega$.  Then, for any $0 <r<R$,
   	\begin{equation*}
   		\T(u_+;\xi_0,r) \leq \textbf{c} \sup_{B_{r}} u + \textbf{c}\, (1-s)\left(\frac{r}{R}\right)^{2s}\T(u_-;\xi_0,R)\,,
   	\end{equation*}
   	where $\textbf{c}$ depends only on $n$.
   \end{lemma}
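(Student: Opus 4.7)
The plan is to specialize the proof of Lemma~\ref{s3_lem5} to the linear case $p=2$, $f\equiv 0$, while tracking the $s$-dependence in all constants so that a factor $(1-s)$ emerges multiplying the nonlocal tail $\T(u_-;\xi_0,R)$, in the same spirit as the proof of Lemma~\ref{s4_lem1} (where the analogous $(1-s)$ was built into the parameter $d$). I emphasize that for Lemma~\ref{s4_lem5} the underlying operator is the pure fractional subLaplacian with the Korani-Folland norm $|\cdot|_{\h^n}$, so no structural constant $\Lambda$ appears and the final $\textbf{c}$ must depend only on $n$.

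First I would set $k := \sup_{B_r} u$ and pick a cutoff $\phi \in C_0^\infty(B_r)$ with $\phi \equiv 1$ on $B_{r/2}$ and $|\nabla_{\h^n}\phi| \leq 8/r$. Since $u$ is a solution of \eqref{fractional_pbm} and $\psi := (u-2k)\phi^2$ belongs to $H^s_0(B_r)$, the weak formulation reduces to the identity $H_1 + H_2 + H_3 = 0$, with the three terms defined exactly as in \eqref{s3_lem5_1}. The interior piece can be bounded by $H_1 \geq -c\,k^2 r^{-2s}|B_r|$, following the pointwise estimate for the test function combined with the kernel bound $\int_{B_r}|\eta^{-1}\circ\xi|_{\h^n}^{2-Q-2s}\,{\rm d}\eta \leq c\,r^{2-2s}$, exactly as in \eqref{s3_lem5_5}.

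The heart of the argument is the lower bound on $H_2+H_3$. I would split the outer integration according to the sign of $u(\eta)$: the subset $\{u(\eta)\geq 2k\}$ is disjoint from $B_r$ (since $u \leq k$ there) and yields, after using $u(\xi)-u(\eta) \leq -k$, a positive contribution of size $c\,k|B_r|r^{-2s}\T(u_+;\xi_0,r)$. On $\{u(\eta)<0\}$, the hypothesis $u \geq 0$ in $B_R$ forces integration over $\h^n\setminus B_R$, so the elementary comparison $|\eta^{-1}\circ\xi|_{\h^n} \simeq |\eta^{-1}\circ\xi_0|_{\h^n}$ for $\xi \in B_r$ and $\eta \in \h^n \setminus B_R$ gives a negative contribution bounded below by $-c\,k^2 r^{-2s}|B_r| - c\,k|B_r|R^{-2s}\T(u_-;\xi_0,R)$, precisely as in \eqref{s3_lem5_4}.

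Combining $H_1+H_2+H_3=0$ with these estimates and dividing by $c\,k|B_r|r^{-2s}$ gives the naive bound $\T(u_+;\xi_0,r) \leq c\sup_{B_r} u + c(r/R)^{2s}\T(u_-;\xi_0,R)$. To promote the coefficient of the negative tail from $c$ to $c(1-s)$ one reweights the argument just as in the proof of Lemma~\ref{s4_lem1}: the auxiliary threshold separating local and nonlocal scales carries an extra $(1-s)$, which exactly matches the $(1-s)^{-1}$ singularity originating from the kernel integral $\int_0^r \rho^{1-2s}\,{\rm d}\rho = r^{2-2s}/[2(1-s)]$. The main obstacle I expect lies precisely in this bookkeeping: one must verify that the $(1-s)$ pushed into the tail really absorbs all the residual $s$-dependence of the dimensional constants coming from the fractional Poincar\'e inequality and from the kernel asymptotics, so that the final constant $\textbf{c}$ depends only on $n$ and the bound degenerates to the classical tail-less Harnack estimate as $s \to 1^-$, in accordance with Theorems~\ref{thm_harnack2} and~\ref{thm_weakharnack2}.
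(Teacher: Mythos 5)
Your skeleton is the right one and is exactly the route the paper intends (the paper itself only says that ``no modifications are essentially needed'' and never writes the argument out): test with $\psi=(u-2k)\p^2$, split into $H_1+H_2+H_3=0$, extract the positive term $c\,k\,|B_r|\,r^{-2s}\,\T(u_+;\xi_0,r)$ from the exterior integrals, and bound the remaining pieces. The genuine gap is in the last step, which you yourself flag as ``the main obstacle'': the promotion of the coefficient of $\T(u_-;\xi_0,R)$ from $c$ to $c\,(1-s)$ is not justified, and the mechanism you invoke cannot produce it. First, the proof of Lemma~\ref{s3_lem5} contains no auxiliary threshold $d$ at all --- the test function is $(u-2k)\p^p$, not a truncation built from $u+d$ --- so there is nothing to ``reweight as in Lemma~\ref{s4_lem1}''. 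Second, the $(1-s)^{-1}$ singularity you correctly identify, coming from $\int_{B_r}|\eta^{-1}\circ\xi|_{\h^n}^{2-Q-2s}\,{\rm d}\eta\le \frac{c}{1-s}\,r^{2-2s}$, lives in $H_1$ and therefore multiplies $k^2=(\sup_{B_r}u)^2$; cancelling a factor $(1-s)$ against it could only affect the coefficient of $\sup_{B_r}u$, never that of the tail. The coefficients of the good term $\T(u_+;\xi_0,r)$ and of the bad term $\T(u_-;\xi_0,R)$ both arise from the same exterior kernel integrals $\int_{\h^n\smallsetminus B_\rho}(\cdot)\,|\eta^{-1}\circ\xi|_{\h^n}^{-Q-2s}\,{\rm d}\eta$ with $\xi\in{\rm supp}\,\p$, hence are comparable up to $s$-independent constants, and no rearrangement of this argument opens a gap of order $(1-s)$ between them.

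Once all constants are tracked, what your argument actually yields is
\begin{equation*}
	\T(u_+;\xi_0,r)\ \le\ \frac{c}{1-s}\,\sup_{B_r}u\ +\ c\left(\frac{r}{R}\right)^{2s}\T(u_-;\xi_0,R),
\end{equation*}
with $c=c(n)$ for $s$ bounded away from $0$, i.e.\ the statement with an extra factor $(1-s)$ also on the left-hand side. That weaker, symmetric form is the natural robust output of the $H_1+H_2+H_3$ scheme (and is what one would feed into the interpolation argument of Theorem~\ref{thm_harnack2}, where the tail of $u_+$ only ever appears multiplied by a small parameter). To obtain Lemma~\ref{s4_lem5} exactly as stated --- $\T(u_+)$ with coefficient one on the left, $c(n)\sup_{B_r}u$ and $c(n)(1-s)\T(u_-)$ on the right --- you would need a genuinely different ingredient; the bookkeeping you defer does not close, so the proposal as written does not prove the lemma.
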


\vspace{0.1mm}

\end{document}